\DeclareMathOperator*{\argmin}{arg\,min}
\newtheorem{thm}{Theorem}
\newtheorem{lem}{Lemma}
\newdefinition{rmk}{Remark}
\newproof{pf}{Proof}
\newproof{pot}{Proof of Theorem \ref{thm2}}
\numberwithin{thm}{section}
\numberwithin{lem}{section}
\def\B#1{\mbox{\boldmath{$#1$}}}
\newcommand{\onequart}{\mbox{$\frac{1}{4}$}}
\newcommand{\pd}{\partial}
\newcommand{\bx}{\B{x}}
\newcommand{\onehalf}{\mbox{$\frac{1}{2}$}}
\newcommand{\WW}{\mathcal{W}}
\def\be{\begin{equation}}
\def\ee{\end{equation}}
\def\ba{\begin{array}}
\def\ea{\end{array}}
\def\bea{\begin{eqnarray}}
\def\eea{\end{eqnarray}}
\def\beas{\begin{eqnarray*}}
\def\eeas{\end{eqnarray*}}
\newcommand{\bseq}{\begin{subequations}}
\newcommand{\eseq}{\end{subequations}}
\newcommand{\norm}[1]{|\!| #1 |\!|}
\newcommand{\dx}{\,\text{d}x}
\renewcommand{\d}[1]{\,\text{d}#1}
\newcommand*\closedset[1]{%
   \,
   \vbox{%
     \hrule height 0.5pt
     \kern0.25ex
     \hbox{%
       \kern -0.3em
       \ifmmode#1\else\ensuremath{#1}\fi
       \kern -0.3em
     }
   }
   \,
}
\journal{Computer Methods in Applied Mechanics and Engineering} 
\begin{document}

\begin{frontmatter}



\title{\large Unification of variational multiscale analysis and Nitsche's method, \\and a resulting boundary layer fine-scale model}

\author{Stein~K.F.~Stoter\corref{cor1}$^{a,b}$}
\ead{Stote031@umn.edu}
\author{Marco~F.P.~ten~Eikelder$^c$}
\ead{M.F.P.tenEikelder@tudelft.nl}
\author{Frits~de~Prenter$^d$}
\ead{F.d.Prenter@tue.nl}
\author{Ido~Akkerman$^c$}
\ead{I.Akkerman@tudelft.nl}
\author{E.~Harald~van~Brummelen$^d$}
\ead{E.H.v.Brummelen@tue.nl}
\author{Clemens~V.~Verhoosel$^d$}
\ead{C.V.Verhoosel@tue.nl}
\author{Dominik~Schillinger$^{a,b}$}
\ead{Schillinger@ibnm.uni-hannover.de}

\address{$^a$ Department of Civil, Environmental, and Geo-$\,$Engineering, University of Minnesota, Minneapolis, USA \\
$^b$ Institute of Mechanics and Computational Mechanics, Leibniz University Hannover, Hannover, Germany \\
$^c$ Department of Mechanical, Maritime and Materials Engineering, Delft University of Technology, Delft, The Netherlands \\
$^d$ Department of Mechanical Engineering, Eindhoven University of Technology, Eindhoven, The Netherlands 
}

\cortext[cor1]{Corresponding author;\\
Department of Civil, Environmental, and Geo-$\,$Engineering,  University of Minnesota, 500 Pillsbury Drive S.E., Minneapolis, MN 55455-0116, USA.}

\begin{abstract}
We show that in the variational multiscale framework, the weak enforcement of essential boundary conditions via Nitsche's method corresponds directly to a particular choice of projection operator. 
The consistency, symmetry and penalty terms of Nitsche's method all originate from the fine-scale closure dictated by the corresponding scale decomposition. 
As a result of this formalism, we are able to determine the exact fine-scale contributions in Nitsche-type formulations.
In the context of the advection-diffusion equation, we develop a residual-based model that incorporates the non-vanishing fine scales at the Dirichlet boundaries. 
This results in an additional boundary term with a new model parameter. 
We then propose a parameter estimation strategy for all parameters involved 
that is also consistent for higher-order basis functions. 
We illustrate with numerical experiments that our new augmented model mitigates the overly diffusive behavior that the classical residual-based fine-scale model exhibits in boundary layers at boundaries with weakly enforced essential conditions.
\end{abstract}

\begin{keyword}
Variational multiscale method \sep Nitsche's method \sep weak boundary conditions \sep advection-diffusion equation \sep boundary layer accuracy \sep fine-scale Green's function \sep higher-order basis functions
\end{keyword}

\end{frontmatter}

\newpage

\tableofcontents


\newpage

\section{Introduction}
\label{sec:intro}

The variational multiscale (VMS) method was established in the 1990s by Hughes and coworkers as a universal framework for developing and classifying stabilized methods \cite{Hughes1995, Hughes1996, Hughes1998,Franca2006,Coley2018}. It was hypothesized that the unresolved fine-scale nature of the solution of the partial differential equation is of key importance for the stability of finite-element schemes. The VMS methodology offers a means to ascertain the effect of the fine-scale solution onto the resolved finite element solution. As exact expressions for the fine scales are often not available, these fine-scale effects must be modeled.
A particularly prevalent class of fine-scale models is that of the residual-based models \cite{Bazilevs2007}. By assuming vanishing fine scales on element boundaries, the associated fine-scale problem can be solved on each element locally. The fine-scale solution is then approximated as the residual of the coarse-scale finite element solution, multiplied by the averaged Green's function. It has been shown that the resulting stabilized formulation is closely related (and often equivalent) to classical stabilized methods \cite{Brezzi1997b,Hughes2004b}. Typical examples include streamline-upwind Petrov-Galerkin (SUPG) \cite{Brooks1982,Brezzi1997b,Hughes2004b,tenEikelder2018}, Galerkin least-squares (GLS) \cite{HUGHES1989173,Hughes2004b, tenEikelder2018}, and pressure-stabilized Petrov-Galerkin (PSPG) methods \cite{Tezduyar1991}. More recently, residual-based modeling of the fine scales has found its use as an effective turbulence model for finite element implementations of the Navier-Stokes equations \cite{Bazilevs2007,Codina2007,Masud2006,Chang2012,Wang2010, Gravemeier2011,Takizawa2014, tenEikelder2018ii}. Since this turbulence model is mathematically inspired by the fine-scale equations, its parameters are clearly defined and it yields consistent formulations. Both these points are in contrast with typical eddy viscosity models, which are phenomenologically inspired and variationally inconsistent.

Another approach that has been shown to yield favorable results for fluid-mechanics applications is the use of weakly imposed Dirichlet boundary conditions \cite{Bazilevs2007weak_a,Bazilevs2007weak_b, bazilevs2010isogeometric}. Typically, Nitsche's method is the method of choice for weakly enforcing essential boundary conditions. While Nitsche's method was initially proposed in relation to energy minimization functionals \cite{Nitsche1971}, both its symmetric and nonsymmetric variants have since been studied extensively in fluid-mechanics applications \cite{Bazilevs2007weak_a,Bazilevs2007weak_b, bazilevs2010isogeometric,Burman2012}. One of the main drivers for the significant recent interest in weakly enforced boundary conditions is their importance in immersed finite element methods. Notable references in the context of fluid mechanics include \cite{bazilevs2012isogeometric, xu2016tetrahedral, hsu2016direct,kamensky2015immersogeometric, wu2017optimizing, hsu2014fluid,hoang2019}.
For immersed finite element methods, the approximation space is no longer tailored to fit the domain boundary. The essential boundary conditions can thus not easily be satisfied strongly. Hence, there is a need for weakly enforcing the Dirichlet boundary condition in the weak formulation. 


At first glance, the variational multiscale method and Nitsche's method appear to be at conflict: the basis for a variational multiscale decomposition is a well-posed continuous weak formulation, but Nitsche's method involves flexible spaces at Dirichlet boundaries and requires penalty terms that become unbounded in the continuous limit. Additionally, the fine-scale solution does, by design, not vanish on the Dirichlet boundary, which violates one of the key assumptions on which traditional residual-based fine-scale models are built. In previous work, we focused on discontinuous Galerkin methods, where the discontinuities between elements give rise to similar issues \cite{Stoter2017a,Stoter2017b,Stoter2019a}. The goal of this article is to completely eliminate these issues for weak boundary imposition in Nitsche-type formulations.

The remainder of this article is structured as follows. In \cref{sec:VMSform}, we derive a variational multiscale finite element formulation of the advection-diffusion equation and we show that Nitsche's method arises from a particular choice of fine-scale closure. In \cref{sec:VMSmodel}, we develop the fine-scale model that takes into account the non-vanishing fine scales at the Dirichlet boundary and provide estimates for the involved model parameters. The complete formulation is summarized in \cref{ssec:summary}. Next, in \cref{sec:coerc}, we show that the resulting bilinear form is coercive. In \cref{sec:numexpP1}, we verify the theory for a one-dimensional model problem, and in \cref{sec:numexpHO} we computationally investigate the performance for a two-dimensional model problem that involves multiple boundary layers. 
In \cref{sec:concl}, we present concluding remarks. \\[-0.6cm] 

\section{A variational multiscale derivation of the finite element formulation }
\label{sec:VMSform}
The classical model problem for variational multiscale analysis is the steady advection-diffusion equation. Let $\Omega$ denote the spatial domain with boundary $\partial \Omega$. The governing equations in strong form read:\\[-0.6cm]
\begin{subequations}
  \label{Strong}
  \begin{alignat}{3}
    \B{a} \cdot\nabla \phi -\nabla \cdot \kappa\nabla \phi &= f && \quad \text{in} \quad &&\Omega,\label{PDE}\\
    \phi &= \phi_D && \quad \text{on} \quad &&\partial\Omega_D,\label{Dirichlet}\\
    \kappa \, \partial_n \phi &= g_N && \quad \text{on} \quad &&\partial\Omega_N^+,\label{NeumOut}\\
    \kappa \, \partial_n \phi - \B{a} \cdot \B{n} \phi &= g_N && \quad \text{on} \quad &&\partial\Omega_N^-,\label{NeumIn}
  \end{alignat}
\end{subequations}
where the dependent variable $\phi=\phi(\bx)$ maps $\Omega$ into $\mathbb{R}$. The source function $f: \Omega \rightarrow \mathbb{R}$, the Dirichlet data $\phi_D:\partial\Omega_D \rightarrow \mathbb{R}$ and the Neumann (or Robin) data $g_N:\partial\Omega_N \rightarrow \mathbb{R}$ are exogenous functions that are assumed to be $L^2$-integrable on their respective domains. The advective velocity $\B{a}=\B{a}(\bx)$ is a given solenoidal vector field ($\nabla \cdot \B{a} = 0$) and the diffusivity $\kappa$ is strictly positive. 
 The Dirichlet and Neumann (or Robin) boundaries $\partial\Omega_D$ and $\partial\Omega_N$ are complementary subsets of the boundary
$\partial\Omega$, i.e. $\partial\Omega = \closedset{\partial\Omega_D} \cup \closedset{\partial\Omega_N}$, and the superscripts $+$ and $-$ indicate outflow ($\B{a} \cdot \B{n} \geq 0$) and inflow parts ($\B{a} \cdot \B{n}<0$) respectively. On the boundary, the normal gradient is denoted $\pd_n \phi = \B{n} \cdot\nabla \phi$. 
By convention, $\B{n}$ denotes the outward facing unit normal vector.

\subsection{Variational multiscale weak formulation}
\label{ssec:VMSform}

To obtain the weak formulation we multiply by a test function and integrate by parts wherever suitable. Different from classical functional (and variational multiscale) analysis we keep the traces of our function spaces on the domain boundary variable. This requires the use of Lagrange multipliers for the enforcement of the Dirichlet boundary conditions. To ensure inf-sup stability of the resulting bilinear form, we substitute the known data on the inflow part of the Dirichlet boundary in the advective term. 
We then obtain the following weak formulation:\\[-0.6cm]
\begin{subequations}
  \label{Weak}
  \begin{alignat}{1}
& \hspace{-0.5cm}\text{Find } \phi \in \mathcal{W} \text{ and } \lambda \in \mathcal{Q} 
\text{ s.t. } \forall\, w \in \WW \text{ and } q \in \mathcal{Q}:\nonumber\\
  \begin{split}
   & - \big( \B{a} \cdot\nabla w ,  \phi \big)_\Omega  
   + \big\langle  \B{a} \cdot \B{n}\,  w ,\phi\big\rangle_{\partial\Omega^+} 
   + \big( \nabla w, \kappa \nabla \phi \big)_\Omega 
   + \big\langle w , \lambda \big \rangle_{\partial\Omega_D}  \\
   &\hspace{15mm}
    = \big( w , f \big)_\Omega 
   + \big\langle w , g_N \big \rangle_{\partial\Omega_N}
      - \big\langle  \B{a} \cdot \B{n}\,  w ,\phi_D\big\rangle_{\partial\Omega_D^-}, \label{PDEWF}
 \end{split}\\
   &  \big\langle q , \phi \big \rangle_{\partial\Omega_D}  =  \big\langle q , \phi_D \big \rangle_{\partial\Omega_D} , \label{DirichletWF}
  \end{alignat}
\end{subequations}
where $\big(\,\cdot\,,\,\cdot\,\big)_\Omega$ denotes the $L^2$-inner product on domain $\Omega$, and $\big\langle\,\cdot\,,\,\cdot\,\big\rangle_{\partial \Omega}$ denotes the $L^2$-inner product on a surface, which is to be interpreted in the sense of a duality pairing. 
In its current form, the suitable functional spaces may be identified as $\WW=H^1(\Omega)$ and $\mathcal{Q} = H^{-1/2}(\partial \Omega_D)$. 
Equivalence of the strong and weak forms, \cref{Strong,Weak}, dictates that $\lambda = -\kappa \partial_n \phi$.  

The variational multiscale approach splits the trial solution and test function spaces into coarse and fine scales. The coarse scales live on the finite element grid, whereas the fine scales are determined via a model equation. This decomposition may be written as:
\begin{align}
  \WW = \WW^h \oplus \WW', \label{WhWp}
\end{align}
where $\WW^h$ is the space spanned by the finite-dimensional discretization and the fine-scale space $\WW'$ is an infinite-dimensional complement in $\WW$. The components of the solutions and test functions decouple as
\begin{subequations}\label{wwhwp}
  \begin{alignat}{1}
  \phi =& \phi^h + \phi',\\
  w =& w^h + w',
  \end{alignat}
\end{subequations}
with coarse scales $\phi^h, w^h \in \WW^h$ and fine scales $\phi', w' \in \WW'$. The direct sum decomposition in \cref{WhWp} is associated with a projection operator: 
\begin{subequations}
  \begin{alignat}{1}
  w^h =& \mathscr{P}^h w \in \WW^h, \label{eq:opt_proj}\\
  w' =& \left(\mathscr{I}-\mathscr{P}^h\right) w \in \WW', \label{wp}
  \end{alignat}\label{whwp}%
\end{subequations}
where $\mathscr{P}: \WW \rightarrow \WW^h$ is the projector and $\mathscr{I}: \WW \rightarrow \WW$ is the identity operator. Formally, this projector is incorporated in the weak formulation through the definition of the fine-scale space $\WW'$. It follows from \cref{wp} that $\WW'=\text{Im}(\mathscr{I}-\mathscr{P}^h) = \text{Ker}\mathscr{P}^h $. Then, the direct sum decomposition of \cref{WhWp} ensures the unique decomposition that satisfies \cref{wwhwp,whwp}. Using this multiscale split we arrive at the following alternative -- equivalent -- weak statement:
\begin{subequations}
  \begin{alignat}{1}
& \hspace{-0.25cm}\text{Find } \phi^h \in \mathcal{W}^h, \phi' \in \mathcal{W}', \lambda \in \mathcal{Q} \text{ s.t. } \forall w^h \in \WW^h, w' \in \WW', q \in \mathcal{Q}:\nonumber\\
  \begin{split}
   & - \big( \B{a} \cdot\nabla w^h , \phi^h \!+ \phi' \big)_\Omega  
   \!+ \big( \nabla w^h, \kappa \nabla \phi^h \!+ \kappa \nabla\phi' \big)_\Omega  
    \! + \big\langle  \B{a} \cdot \B{n}\,  w^h ,\phi^h \!+ \phi'\big\rangle_{\partial\Omega^+} 
   \! + \big\langle w^h , \lambda \big \rangle_{\partial\Omega_D} \\
     &\hspace{15mm} = \big( w^h , f \big)_\Omega 
   + \big\langle w^h , g_N \big \rangle_{\partial\Omega_N}
      - \big\langle  \B{a} \cdot \B{n}\,  w^h ,\phi_D\big\rangle_{\partial\Omega_D^-} ,
  \end{split}\label{eq:L_exact} \\
  \begin{split}
   & - \big( \B{a} \cdot\nabla w' ,  \phi^h + \phi' \big)_\Omega  
   + \big( \nabla w', \kappa \nabla\phi^h + \kappa \nabla\phi' \big)_\Omega 
   + \big\langle  \B{a} \cdot \B{n}\,  w' ,\phi^h + \phi'\big\rangle_{\partial\Omega^+} 
   \!+ \big\langle w' , \lambda \big \rangle_{\partial\Omega_D} \\
     &\hspace{15mm} = \big( w' , f \big)_\Omega 
   + \big\langle w' , g_N \big \rangle_{\partial\Omega_N}
      - \big\langle  \B{a} \cdot \B{n}\,  w' ,\phi_D\big\rangle_{\partial\Omega_D^-} ,
      \end{split}\label{eq:S_exact}\\
 &\big\langle q , \phi^h + \phi' \big \rangle_{\partial\Omega_D} =    
 \big\langle q , \phi_D \big \rangle_{\partial\Omega_D} \,. \label{lagrange2}
  \end{alignat}
\end{subequations}
Equation (\ref{eq:L_exact}) is the `coarse-scale problem', and can be interpreted as a relation for $\phi^h$ for a given $\lambda$ and $\phi'$. The Lagrange multiplier is taken care of by substituting the known value in the continuous case, viz. $\lambda = -\kappa \partial_n \phi^h -\kappa \partial_n \phi'$. 
Similarly, the `fine-scale problem' of \cref{eq:S_exact} can be conceived as a relation for the fine-scale component $\phi'\in\WW'$. This space, however, is infinite-dimensional and is thus not amenable to discrete implementation. Hence, in \cref{eq:L_exact} a closure model will be substituted in place of the fine-scale solution.

\subsection{Nitsche's method as a partial fine-scale closure}
\label{ssec:NitscheVMS}


Next, we show that a particular fine-scale closure condition leads to Nitsche's classical formulation. 
Our goal is to illustrate that Nitsche's method and the accompanying penalty terms are not in conflict with the VMS theory, but rather can be interpreted naturally in the VMS framework as a particular choice of fine-scale closure. 
Consider the following projection operator, which we will refer to as the Nitsche projector:
\begin{align}
\begin{alignedat}{3}
    \mathscr{P}_{\!N}:&\,\WW &&\rightarrow&& \,\,\WW^h\\
    & \,\,\phi &&\mapsto&& \,\,\argmin\limits_{\phi^h\in \WW^h} \int\limits_{\Omega} \frac{1}{2}\kappa (\nabla \phi - \nabla \phi^h )\cdot (\nabla \phi - \nabla \phi^h ) - \! \int\limits_{\partial \Omega_D} \kappa ( \pd_n \phi - \pd_n \phi^h ) (\phi- \phi^h) \\[-0.2cm]
    & && && \hspace{2cm}+\int\limits_{\partial \Omega_D} \frac{1}{2} \kappa \beta(\phi - \phi^h )^2  + \!\! \int\limits_{\partial \Omega^+} \frac{1}{2} \B{a}\cdot \B{n} (\phi - \phi^h )^2 .
\end{alignedat}\label{NitscheMinimization}
\end{align}
By taking the G\^ateaux derivative and subsequently replacing $\phi-\phi^h$ by $\phi'$, we obtain the following associated optimality condition for the fine scales:
\begin{align}
\begin{alignedat}{3}
    &- \big(\nabla v^h, \kappa \nabla \phi'\big)_{\Omega} + \big\langle v^h , \kappa \pd_n \phi' \big\rangle_{\partial\Omega_D}  + \big\langle  \kappa \pd_n v^h , \phi ' \big\rangle_{\partial\Omega_D} \\
    &\hspace{2cm}- \big\langle \kappa \beta v^h , \phi' \big\rangle_{\partial\Omega_D}  - \big\langle \B{a}\cdot\B{n}   v^h , \phi' \big\rangle_{\partial\Omega^+} = 0 \qquad \forall v^h \in \WW^h\,. \label{optimalitycond}
\end{alignedat}
\end{align}

Note that \cref{NitscheMinimization,optimalitycond} involve integrals of the normal derivative of functions in $\WW$ on the boundary of $\Omega$. In order for these integrals to be well-defined for all $w\in \WW$, we require a redefinition of $\WW$. If we set $\WW =\mathcal{V}\cup\mathcal{C}^0(\Omega)$ with $\mathcal{V} = \{\phi\in H^1(\Omega) : \Delta \phi \in L^2(\Omega)\}$, then:
\begin{align}
    \int_{\partial\Omega} \pd_n \phi \, v  = \int_{\Omega}\nabla\cdot ( \nabla \phi \, v ) = \int_{\Omega} \Delta\phi \, v + \int_{\Omega}\nabla\phi \cdot \nabla v \quad \forall\, v\in H^1(\Omega) 
\end{align}
defines $\pd_n \phi$ on the boundary for all $v\in\mathcal{V}$. Given that we are considering data of the form $f\in L^2(\Omega)$, the true solution $\phi$ will automatically be an element of this new space.

Typically, in the variational multiscale framework, one would attempt to invert the fine-scale problem of \cref{eq:S_exact} while satisfying the requirement posed by \cref{optimalitycond}. 
However, the particular structures of the coarse-scale problem and the Nitsche projector allow for a more direct inversion of (part of) the fine scales in the coarse-scale equation \cite{Stoter2017b,Stoter2019a}. First, we recognize that the fine-scale terms in \cref{optimalitycond} on the Dirichlet boundary may be written in terms of the coarse-scale solution through the definition $\phi'= \phi-\phi^h= \phi_D-\phi^h$. Then, since \cref{optimalitycond} holds for all $v^h \in \WW^h$, we may choose $v^h = w^h$ and add the obtained equality to the coarse-scale problem of \cref{eq:L_exact}. By following this procedure we are left with Nitsche's formulation of the advection-diffusion problem:
\begin{align}
& \hspace{-0.3cm}\text{Find } \phi^h \in \mathcal{W}^h \text{ s.t. } \forall\, w^h \in \WW^h:\nonumber\\
  \begin{split}
   & - \big( \B{a} \cdot\nabla w^h , \phi^h + \phi' \big)_\Omega  
       \!+ \big\langle  \B{a} \cdot \B{n}\,  w^h ,\phi^h \big\rangle_{\partial\Omega^+} 
   \!+ \big( \nabla w^h, \kappa \nabla \phi^h \big)_\Omega  - \big\langle w^h , \kappa \partial_n \phi^h  \big \rangle_{\partial\Omega_D} 
\\
   &\hspace{4mm}  
      \!- \big\langle  \kappa \pd_n w^h , \phi^h \big\rangle_{\partial\Omega_D}
   \!+ \big\langle w^h , \kappa \beta  \phi^h \big \rangle_{\partial\Omega_D}  
   = \big( w^h , f \big)_\Omega \! - \big\langle  \B{a} \cdot \B{n}\,  w^h ,\phi_D \big\rangle_{\partial\Omega_D^-}   
   \! + \big\langle w^h , g_N \big \rangle_{\partial\Omega_N} 
   \\
     & \hspace{8mm}
      \!- \big\langle  \kappa \pd_n w^h  , \phi_D \big\rangle_{\partial\Omega_D}
   \!+ \big\langle w^h ,  \kappa\beta \phi_D \big \rangle_{\partial\Omega_D}   ,
  \end{split}\label{MSNitsche}
\end{align}
where only one fine-scale term remains. We emphasize that both the symmetric term of Nitsche's method as well as its penalty term originate from the fine-scale inversion per \cref{optimalitycond}.

\textbf{Remark 1:} If we write the remaining fine-scale term as $\big(\mathcal{L}_{\text{stab}} w^h, \phi' \big)_\Omega$, then we observe that the appropriate fine-scale model (read: stabilization technique) in conjunction with Nitsche's method involves the SUPG operator $\mathcal{L}_{\text{stab}} = - \B{a} \cdot \nabla $. 
Typically, in variational multiscale stabilized methods, the adjoint differential operator $\mathcal{L}_{\text{stab}} = - \B{a} \cdot \nabla - \nabla\cdot \kappa \nabla $ is proposed. 
However, since the diffusive part of the fine-scale terms has already been incorporated via the introduction of the Nitsche terms, the diffusive operator no longer occurs in the fine-scale term.

\subsection{Analysis of the Nitsche projector}

The projector defines the scale decomposition of \cref{WhWp} via \cref{whwp}. This means that it impacts the appropriate modeling choices in the development of the fine-scale model. Before continuing the fine-scale modeling, we thus first dedicate a short study on the Nitsche projector. The ensuing analysis will heavily rely on the work by Hughes and Sangalli in \cite{Hughes2007}, who performed the same analysis using the $H^1_0$ and $L^2$ projectors. 

A projector may be defined by a finite number of functional constraints; as many as the dimension of $\WW^h$:
\begin{align}
\begin{alignedat}{3}
\mathscr{P}_{\!N} &\phi = \phi^h \text{ such that:}\\
&\mu_i\big(\phi-\phi^h\big)  = 0 \qquad i = 1,...,\dim(\WW^h)\,.
\end{alignedat}\label{PImage}
\end{align}
The $\mu_i(\cdot)$ functionals for the Nitsche projector may be inferred from the weak statement of \cref{optimalitycond}. By performing integration by parts on its first term while interpreting the resulting integral in the sense of distributions, the functional constraints follow from substituting the various candidates for $v^h$. For a one-dimensional domain with a set of nodes $\Gamma = \{x_1,x_2,..,x_n \}$ and element domains $\mathcal{T} = \{ [x_1,x_2],..,[x_{n-1},x_n] \}$ and a typical nodal finite element construction of $\WW^h$ with polynomial order $P$, we find:
\begin{subequations}
\begin{alignat}{7}
\mathscr{P}_{\!N} &\phi = \phi^h \text{ such that:}&&\nonumber\\
&\mu\big(\phi-\phi^h\big) = \phi(x_i)-\phi^h(x_i)  = 0 \quad && \hspace{-3cm}\text{for } x_i \in \Gamma\setminus  \partial\Omega_{D}\,, \label{FSSMa}\\
&\mu\big(\phi-\phi^h\big) = \textstyle\int_{K} (\phi-\phi^h) x^p  = 0  \quad && \hspace{-3cm} \text{for }K\in \mathcal{T} \text{ and } 0 \leq p \leq P\!-\!2\,,\label{FSSMd}\\
&\mu\big(\phi-\phi^h\big) = \kappa \pd_n (\phi-\phi^h)\big|_{x_i}  - \kappa\beta (\phi-\phi^h)\big|_{x_i}  = 0 \qquad&& \text{for } x_i\in \partial\Omega_{D}^-\,, \label{FSSMb}\\
&\mu\big(\phi-\phi^h\big) = \kappa \pd_n (\phi-\phi^h)\big|_{x_i} - (\kappa \beta + \B{a}\cdot\B{n} ) (\phi-\phi^h)\big|_{x_i} = 0  \qquad && \text{for } x_i\in\partial\Omega_{D}^+\,. \label{FSSMc}
\end{alignat}\label{PNImage}%
\end{subequations}
The first requirement dictates nodal exactness of the finite element formulation, and together with the second requirement these define the $H^1_0$ projector \cite{Hughes2007}. The last two originate from the extra degrees of freedom on the Dirichlet boundary. \newpage

\textbf{Remark 2:} The central role of the projector can be useful for the interpretation of the obtained finite element solution. For instance, for the current example of Nitsche's method, a better approximation of the true diffusive flux on the Dirichlet boundary could be obtained by rewriting \cref{FSSMb,FSSMc}:
\begin{subequations}
\begin{alignat}{2}
-\kappa \pd_n \phi &= -\kappa \pd_n \phi^h + \kappa\beta (\phi^h-\phi_D) && \quad \text{on}\quad \partial\Omega_{D}^-\,,\\
-\kappa \pd_n \phi &= -\kappa \pd_n \phi^h + (\kappa\beta+\B{a}\cdot\B{n}) (\phi^h-\phi_D)&& \quad \text{on}\quad \partial\Omega_{D}^+\,.
\end{alignat} \label{difflux}%
\end{subequations}
The same expressions were also proposed in \cite{Bazilevs2007weak_a} by Bazilevs \textit{et al.}, although motivated based on discrete conservation laws. 

\textbf{Remark 3:} In \cite{Harari2018}, Harari and Albocher perform a spectral analysis of Nitsche's formulation. Their work shows that its spectrum consists of \emph{i)} traditional modes that are independent of the Nitsche parameter and which vanish on Dirichlet boundaries, and \emph{ii)} modes that depend on the Nitsche parameter and are locally supported in a layer along the Dirichlet boundaries. 
In \cref{PNImage}, we observe a similar split in the (functional) constraints imposed by the Nitsche projector.

\section{A residual-based multiscale model with non-vanishing fine-scale boundary values}
\label{sec:VMSmodel}

For our finite element scheme to yield solutions close to $\phi^h = \mathscr{P}_{\!N}\, \phi$, the weighted integral of $\phi'$ in formulation (\ref{MSNitsche}) needs to be accurately modeled. The model for the remaining fine-scale quantity originates from the inversion of the fine-scale problem of \cref{eq:S_exact}. 

\subsection{Inversion of the fine-scale problem}
\label{ssec:InverseFSP}
Consider the following general form of the fine-scale problem:
\begin{alignat}{1}
& \text{Find } \phi' \in \WW' \text{ s.t. } \forall\, w' \in \WW':\nonumber \\
& \hspace{0.5cm}  a(w',\phi') = \big( w' , f \big)_\Omega - a(w',\phi^h)\,. 
\end{alignat}
By performing the appropriate integration by parts steps we obtain the following integral relation, to be interpreted in the sense of distributions:
\begin{align}
    \int_\Omega \mathcal{L}^* w' \, \phi' \d y \, +\, B(w',\phi';\partial\Omega) =  \int_\Omega w'\,(f-\mathcal{L}\phi^h) \d y =  \int_\Omega w'\,\mathcal{R}_{\phi^h} \d y \,,\label{GF1} 
\end{align}
where we denote our spatial variable $y$, for ease of notation later on. The operator $\mathcal{L}^*$ is the adjoint of the differential operator $\mathcal{L}=\B{a}\cdot\nabla-\nabla\cdot\kappa\nabla$, the coarse-scale residual is denoted $\mathcal{R}_{\phi^h}$, and $B(\cdot,\cdot\,;\partial\Omega)$ is a bilinear form that represents the boundary terms.

Then, we hypothesize that one can find a function $w'(y) \in \WW' $ such that $\mathcal{L}^*\! w'(y)$ acts on any function in $v\in\WW$ in the following way:
\begin{align}
\int_\Omega \mathcal{L}^*\! w' \,v  \d y =: \int_\Omega \mathcal{L}^*\! g'(x,y) \,v  \d y = \int_\Omega \delta_x v \d y + \sum\limits_{i=1}^{\dim(\WW^h)} c_{i}(x) \mu_i( v )\,, \label{gDef}
\end{align}
where we call this particular function the fine-scale Green's function $g'(x,y)$. The $\delta_{x}$ denotes the Dirac delta distribution at $x$, and the $\mu_i$ functionals refer to the functional constraints from \cref{PImage}. The sum provides a relaxation such that $g'(x,y)$ may be found in the constrained space $\WW'$.  From \cref{GF1} we obtain:
\begin{align}
\int_{\Omega} \mathcal{L}^*\! g'(x,y)\, \phi'(y) \,\text{d}y & = \int_{\Omega} \delta_x(y) \, \phi'(y) \,\text{d}y + \sum\limits_{i=1}^{\dim(\WW^h)} c_{i}(x)  \mu_i(\phi') \nonumber\\ 
& = \phi'(x) = \int_{\Omega} g'(x,y) \, \mathcal{R}_{\phi^h}  \,\text{d}y - B(g',\phi';\partial\Omega) \,,\label{phiinvert}
\end{align}
where the summation vanishes due to \cref{PImage}, and the term $B(g(x,y),\phi';\partial K)$ incorporates the fine-scale boundary conditions. This relation determines $\phi'$ from a given $\phi^h = \mathscr{P}_{\!N}\, \phi$.

\subsection{Adoption of the $H^1_0$ fine-scale Green's function}
\label{ssec:H10model}

The fine-scale Green's function in \cref{gDef,phiinvert} corresponds to the Nitsche projector. However, most literature on the variational multiscale method focuses on a scale decomposition by means of the $H^1_0$ projector. To maintain the connection with existing fine-scale models, we reintroduce the $H^1_0$ fine-scale Green's function as follows:
\begin{align}
g'_{N}(x,y) = g'_{H^1_0} (x,y) + \tilde{g}'(x,y)\,. \label{gdecompo}
\end{align}
The newly added subscripts indicate the projector with which the fine-scale Green's function is associated. The similarity between the $H^1_0$ projector and the Nitsche projector can be expressed in terms of their imposed functional constraints, from \cref{PImage}. The set of functions $\mu_i$ corresponding to the projector $\mathscr{P}_{\!H^1_0}$ is a subset of those of $\mathscr{P}_{\!N}$. If we order the set $\mu_i$ such that coinciding occurrences come first, then we may write for the $H^1_0$ fine-scale Green's function:
\begin{align}
\int_\Omega \mathcal{L}^*\! g'_{H^1_0}(x,y) \,v  \d y = \int_\Omega \delta_x v \d y + \hspace{-0.3cm}\sum\limits_{i=1}^{\dim(\WW^h\cap H^1_0)} \hspace{-0.3cm} d_{i}(x) \mu_i( v )\qquad \forall\, v\in \WW\,, \label{gDefH10}
\end{align}
which, after substitution in \cref{gDef}, gives a result for $\tilde{g}'(x,y)$:
\begin{align}
\int_\Omega \mathcal{L}^*\! \tilde{g}'(x,y)  \,v \d y = \hspace{-0.3cm}\sum\limits_{i=1}^{\dim(\WW^h\cap H^1_0)} \hspace{-0.3cm}(c_i(x)-d_i(x)) \mu_i(v) + \hspace{-0.5cm}\sum\limits_{i=\dim(\WW^h\cap H^1_0)+1}^{\dim(\WW^h)}\hspace{-0.5cm} c_{i}(x) \mu_i(v)\qquad \forall\, v\in \WW\,. \label{gDeftilde}
\end{align}
Substitution of \cref{gdecompo} into \cref{phiinvert} while using $ \mathcal{R}_{\phi^h} = \mathcal{L}\phi'$ gives:
\begin{align}
    \phi'(x) &= \int_{\Omega} \big( g'_{H^1_0} (x,y) + \tilde{g}'(x,y) \big)\, \mathcal{R}_{\phi^h}  \,\text{d}y - B( g'_{H^1_0} + \tilde{g}',\phi';\partial\Omega)\nonumber\\
    &= \int_{\Omega} g'_{H^1_0} (x,y) \mathcal{R}_{\phi^h}  \,\text{d}y - B( g'_{H^1_0} ,\phi';\partial\Omega)+ \int_{\Omega} \tilde{g}'(x,y) \mathcal{L}\phi' \,\d y - B( \tilde{g}',\phi';\partial\Omega)\nonumber\\
    &= \int_{\Omega} g'_{H^1_0} (x,y) \mathcal{R}_{\phi^h}  \,\text{d}y - B( g'_{H^1_0} ,\phi';\partial\Omega)+ \int_{\Omega}  \mathcal{L}^*\tilde{g}'(x,y)\phi' \,\d y \,, \label{derivphip}
\end{align}
where the last equality results from performing the integration by parts steps of \cref{GF1} in reverse. 

By using \cref{gDeftilde} in the last term of \cref{derivphip}, it follows from \cref{PImage} that this term vanishes when $\phi^h=\mathscr{P}_{N}\phi$. The expression for $\phi'$ becomes:
\begin{align}
    \phi'(x)     &= \int_{\Omega} g'_{H^1_0} (x,y) \mathcal{R}_{\phi^h}  \,\text{d}y - B( g'_{H^1_0} ,\phi';\partial\Omega)\,. \label{phiinvert2}
\end{align}

\textbf{Remark 4:} The inversion posed by \cref{phiinvert2} is no longer unique; it is satisfied for solutions $\phi^h = \mathscr{P}_{\!N}\, \phi$, but also for solutions $\phi^h = \mathscr{P}_{\!H^1_0}\, \phi$. However, the partial fine-scale closure discussed in \cref{ssec:NitscheVMS} is not satisfied by $\phi^h = \mathscr{P}_{\!H^1_0}\, \phi$. The formulation obtained after substitution of \cref{phiinvert2} into Nitsche's coarse-scale formulation of \cref{MSNitsche} will hence be uniquely satisfied by $\phi^h = \mathscr{P}_{\!N}\, \phi$.\\

Recall that the term $B(\cdot ,\cdot\,;\cdot)$ enforces the fine-scale boundary conditions. The original residual-based model assumes that the fine-scale solution vanishes on element boundaries~\cite{Bazilevs2007}. Since we aim not to make this assumption, this will be the key term that we retain to obtain a more suitable fine-scale model. In \cref{phiinvert2}, this term simplifies since $g'_{H^1_0}\in \WW'\cap H^1_0$ is zero on the boundary. We then restrict our analysis to partial differential equations for which we may write:
\begin{align}
B(g'_{H^1_0},\phi';\partial \Omega) = -\int_{\partial \Omega} \mathbb{H} g'_{H^1_0}(x,y)  \,\big(  \phi(y)-\phi^h(y) \big) \,\text{d}y \label{GFboundary},
\end{align}
with $\mathbb{H}$ being some differential operator. This includes the advection-diffusion operator, for which $\mathbb{H}$ is:
\begin{align}
\mathbb{H} = -\kappa\,\B{n}\cdot\nabla_y \label{H}.
\end{align}

The only 
term in the coarse-scale formulation of \cref{MSNitsche} in which $\phi'$ appears is the advective term.
We thus finally obtain the fine-scale contribution to the coarse-scale equation~as:
\begin{align}
\begin{split}
-\int_{\Omega} \B{a}\cdot\nabla w^h \, \phi' \dx &=  -\int_{\Omega}\! \int_{\Omega} \B{a}\cdot\nabla w^h(x)\,g'_{H^1_0}(x,y)\, \mathcal{R}_{\phi^h}(y) \,\text{d}y  \d{x} \\[-0.3cm]
&\hspace{-1cm}- \!\int_{\Omega}\int_{\partial \Omega} \B{a}\cdot\nabla w^h(x)  \mathbb{H} g'_{H^1_0}(x,y) \, \big(  \phi(y)-\phi^h(y) \big) \d{y} \d{x}\,.
\end{split}\label{eq: weighted fine-scale effect}
\end{align}
The unknown data $\phi(y)$ on the Neumann boundary, together with the double integration and the limited availability of Green's functions, make the closure relation of \cref{eq: weighted fine-scale effect}, albeit exact, unsuitable for computational use. Simplifications via approximations need to be introduced. The strategy that we will employ repeatedly in this article is to reformulate such that exactness is maintained in the case of constant physical parameters on a one-dimensional domain, while ease of implementation is established in the general case.

\subsection{The classical one-dimensional case}
\label{ssec:OneDModel}

In the one-dimensional case, the nodal exactness induced by the $\mathscr{P}_{\!H^1_0}$ projector results in an element-local fine-scale Green's function \cite{Hughes2007}. The double integrals in \cref{eq: weighted fine-scale effect} can thus be split in contributions of individual elements. The newly added term only affects elements that lie adjacent to the Dirichlet boundary, where the precise value of the fine-scale solution is known as $\phi'= \phi_D-\phi^h$. On Neumann boundaries the new term vanishes due to \cref{FSSMa}. Let us consider the contribution of one element that shares a node with the Dirichlet boundary:
\begin{align}
\begin{split}
-\int_{K} \B{a}\cdot\nabla w^h \, \phi' \dx &=-\int_{K}\! \int_{K} \B{a}\cdot\nabla w^h(x)\,g'_{H^1_0}(x,y)\, \mathcal{R}_{\phi^h}(y) \,\text{d}y  \d{x} \\[-0.3cm]
&\hspace{-1cm}- \!\int_{K}\int_{\partial K \cap\partial \Omega_D }\!\! \B{a}\cdot\nabla w^h(x)  \mathbb{H} g'_{H^1_0}(x,y) \, \big(  \phi_D(y)-\phi^h(y) \big) \d{y} \d{x}\,.  \label{1DK}
\end{split}
\end{align}
To simplify, we use the polynomial representation of the test function $w^h\big|_K$ and the residual $\mathcal{R}_{\phi^h}\big|_K$. When $\WW^h$ is constructed with $P$-order nodal elements and the source function $f$ is at most polynomial order $P-1$, then:
\begin{align}
    -\B{a}\cdot\nabla w^h(x)  = -a \sum\limits_{i=1}^{P}  \hat{w}_{i} \,i\,x^{i-1}\,; \qquad \mathcal{R}_{\phi^h}(y) = \sum\limits_{j=1}^{P}  \hat{R}_{j} \,y^{j-1}  \qquad \text{for }x\in K,\,y\in K  \,. \label{PolRep}
\end{align}
We also have the following properties of the fine-scale Green's function $g'_{H^1_0}(x,y)$:
\begin{subequations}
\begin{alignat}{3}
&\int_K \int_K x^q\, g'_{H^1_0}(x,y) \, y^r \text{d}y \d{x} = 0  \qquad&& \text{when } q < P-1 \text{ or } r < P-1 \, , \label{moment1}\\
&\int_K \int_{\partial K} x^q \, \mathbb{H} g'_{H^1_0}(x,y)  \text{d}y \d{x} = 0  \qquad&& \text{when } q < P \label{moment2} .
\end{alignat}\label{moments}%
\end{subequations}
Property \eqref{moment1} is shown in \cite{Hughes2007}, and we prove \eqref{moment2} in \ref{thm:0gamma}. Substituting \cref{PolRep} into \cref{1DK} while using \cref{moments} yields:
\begin{align}
\begin{split}
-\int_{K} \B{a}\cdot\nabla w^h \, \phi' \dx &=-\int_{K}\! \int_{K} a\, \big(\hat{w}_{P} \,P\,x^{P-1} \big) \,g'_{H^1_0}(x,y)\,\big(\hat{R}_{P} \,y^{P-1}\big)  \,\text{d}y  \d{x} \\[-0.3cm]
&\hspace{-1cm}- \!\int_{K}\int_{\partial K \cap\partial \Omega_D }\!\! a\, \big(\hat{w}_{P} \,P\,x^{P-1} \big)  \mathbb{H} g'_{H^1_0}(x,y) \, \big(  \phi_D(y)-\phi^h(y) \big) \d{y} \d{x} \,.
\end{split}
\end{align}
We assume $a$ to be constant in $K$, and we extract $a$ and all other constants from the double integration. We then integrate the two right hand side terms over $K$ and $F:=\partial K \cap\partial \Omega_D$ respectively, and we divide them by $|K|$ and $|F|$:
\begin{align}
\begin{split}
-\int_{K} \B{a}\cdot\nabla w^h \, \phi' \dx &=-\int_{K}\! a \, \hat{w}_{P} P\,h^{P-1} \Big[ \frac{1}{|K|}\int_{K}\! \int_{K}  \,\frac{x^{P-1}}{h^{P-1}} \,g'_{H^1_0}(x,y)\, \frac{y^{P-1}}{h^{P-1}}  \,\text{d}y  \d{x}\Big] \,\hat{R}_{P} h^{P-1} \d{\hat{x}}  \\
&\hspace{-1cm}-\int_{F }\!   a \, \hat{w}_{P} P \,h^{P-1} \Big[ \frac{1}{|F|}\int_{K}\int_{F }\!\!\,\frac{x^{P-1}}{h^{P-1}}  \mathbb{H} g'_{H^1_0}(x,y)\d{y} \d{x} \Big]  \, 
\big(  \phi_D(\hat{x})-\phi^h(\hat{x}) \big)  \d{\hat{x}} \,, \qquad\label{HOexact}
\end{split}\raisetag{0.8cm}
\end{align}
where we can identify the following model parameters:
\begin{subequations}
\begin{alignat}{4}
&\tau &&= \frac{1}{|K|} \int_{K} \int_{K} \,\frac{x^{P-1}}{h^{P-1}} \,g'_{H^1_0}(x,y)\, \frac{y^{P-1}}{h^{P-1}}  \,\text{d}y  \d{x}\,, \label{taudef}\\
&\gamma &&= \frac{1}{|F|} \int_{K}\!\int_{F} \frac{x^{P-1}}{h^{P-1}}  \mathbb{H} g'_{H^1_0}(x,y)\d{y} \d{x} \,. \label{gamdef} 
\end{alignat}\label{taugamdef}%
\end{subequations}
The multiplication and division by $h^{P-1}$ in \cref{HOexact} ($h$ being the element size) ensures that the parameters in \cref{taugamdef} remain dimensionally consistent with varying polynomial order.

\subsection{Fine-scale closure generalization}
\label{ssec:Generalization}

Up until now, all derivations have been exact. To make use of the integral expressions in \cref{HOexact} on multi-dimensional domains, we approximate them by the following inner products:
\begin{subequations}
\begin{alignat}{6}
    & -\int_K a \,\hat{w}_{P}P\,h^{P-1} \tau \hat{R}_{P} h^{P-1}\d{\hat{x}}  &&\approx  -\big(\B{a}\cdot \nabla w^h, \tau_{\text{eff}}  \mathcal{R}_{\phi^h} \big)_K \label{tautermapprx} \,, \\
    & - \int\limits_{F}  a \,\hat{w}_{P}P \,h^{P-1} \gamma \, ( \phi_D-\phi^h ) \d{\hat{x}} \,\, &&\approx  -\big\langle\B{a}\cdot \nabla w^h ,\gamma_{\text{eff}} \, ( \phi_D-\phi^h ) \big\rangle_{F^+ }\, . \label{gammatermapprx}
\end{alignat}\label{modelapproximations}%
\end{subequations}
As \cref{gammatermapprx} indicates, we only make use of the newly proposed term at the outflow Dirichlet boundary $F^+:=\partial K \cap\partial \Omega_D^+$. This is where the boundary layers occur, and where the weak enforcement of the Dirichlet conditions results in impactful fine-scale boundary values.

All the approximations involved in the final finite element formulation may be traced back to these two equations. Essentially, they shift the modeling effort onto the effective stabilization parameters $\tau_{\text{eff}}$ and $\gamma_{\text{eff}}$. 
We propose to design $\tau_{\text{eff}}$ and $\gamma_{\text{eff}}$ such that these are approximations of $\tau$ and $\gamma$ that take into account the change of (bi)linear forms, while being suitable for multi-dimensional computations for arbitrary order polynomial basis functions.

\subsubsection{Estimation of operator impact}
\label{ssec:OpEst}

In the one-dimensional case, the bilinear forms of the left-hand and right-hand sides of \cref{modelapproximations} may be written as: 
\begin{subequations}
\begin{alignat}{5}
& B_\text{vol}( w^h,\phi^h) &&= \big( a \left( \!\tfrac{ h^{P-1}}{(P-1)!}\!\right)\!\tfrac{\partial^P}{\partial x^P} w^h , \tau  \left( \!\tfrac{ h^{P-1}}{(P-1)!}\!\right) \tfrac{\partial^{P-1}}{\partial x^{P-1}} \mathcal{L}{\phi^h} \big)_K  \\
& \tilde{B}_\text{vol}( w^h,\phi^h)&&= \big( a \,\tfrac{\partial}{\partial x} w^h, \tau_{\text{eff}}  \mathcal{L}{\phi^h} \big)_K \\
& B_\text{bdy}( w^h,\phi^h) &&=  \big\langle  a \left( \!\tfrac{ h^{P-1}}{(P-1)!}\!\right)\! \tfrac{\partial^P}{\partial x^P} w^h ,\gamma \,\phi^h \big\rangle_{\partial K \cap\partial \Omega_D^+ } \\
& \tilde{B}_\text{bdy}( w^h,\phi^h) &&= \big\langle a \,\tfrac{\partial}{\partial x} w^h ,\gamma_{\text{eff}} \, \phi^h \big\rangle_{\partial K \cap\partial \Omega_D^+ } 
\end{alignat}
\end{subequations}
The impact of these different bilinear forms may be quantified by considering their norms:
\begin{align}
\norm{B} := \sup\limits_{\tfrac{\partial}{\partial x}w^h\neq 0, \tfrac{\partial}{\partial x}\phi^h\neq 0}\frac{| B(w^h,\phi^h) |}{\norm{\tfrac{\partial}{\partial x}w^h}_{L^2(K)}\norm{\tfrac{\partial}{\partial x}\phi^h}_{L^2(K)}}\,,
\end{align}
where we have chosen to define the norm of the bilinear forms with respect to the $H^1$-seminorm of its arguments, as this seminorm is one of the terms in the optimality condition induced by the Nitsche projector according to \cref{NitscheMinimization}.

We choose $\tau_{\text{eff}}$ and $\gamma_{\text{eff}}$ such that the impact of these bilinear forms equal: $\norm{B_\text{vol}}=\norm{\tilde{B}_\text{vol}}$ and $\norm{B_\text{bdy}}=\norm{\tilde{B}_\text{bdy}}$. If we assume constant parameters in $K$, and the advective dominant case such that $\mathcal{L}{\phi^h}$ may be approximated by $a \tfrac{\partial}{\partial x} \phi^h$, then we obtain:
\begin{subequations}
\begin{alignat}{6}
    &\tau_{\text{eff}} &&\approx \tau \left( \!\tfrac{ h^{P-1}}{(P-1)!}\!\right)^2 \frac{\norm{  \big(\tfrac{\partial^P}{\partial x^P} \,\cdot\, , \tfrac{\partial^{P}}{\partial x^{P}} \,\cdot \big)_K }}{ \norm{\big( \tfrac{\partial}{\partial x} \,\cdot\,, \tfrac{\partial}{\partial x} \,\cdot \big)_K } } 
    = \tau \left( \!\tfrac{ h^{P-1}}{(P-1)!}\!\right)^2  \sup\limits_{\tfrac{\partial}{\partial x}w^h\neq 0}
    \frac{\norm{\tfrac{\partial^P}{\partial x^P}w^h}^2_{L^2(K)}  }
         {\norm{\tfrac{\partial}{\partial x}w^h}^2_{L^2(K)}      } \,,\\
    &\gamma_{\text{eff}}&&\approx \gamma \left( \!\tfrac{ h^{P-1}}{(P-1)!}\!\right) \frac{ \norm{ \big\langle \tfrac{\partial^P}{\partial x^P} \,\cdot\, , \,\cdot \big\rangle_{F }   } }{ \norm{ \big\langle \tfrac{\partial}{\partial x} \,\cdot\, , \, \cdot \big\rangle_{F }   }} 
    = \gamma \left( \!\tfrac{ h^{P-1}}{(P-1)!}\!\right)  
    \tfrac{ \sup\limits_{\tfrac{\partial}{\partial x}w^h\neq 0} \left( |\tfrac{\partial^P}{\partial x^P}w^h|\big|_{F} \,\middle/\, \norm{\tfrac{\partial}{\partial x}w^h}_{L^2(K)} \right) }  
         { \sup\limits_{\tfrac{\partial}{\partial x}w^h\neq 0} \left( |\tfrac{\partial}{\partial x}w^h|    \big|_{F} \,\middle/\, \norm{\tfrac{\partial}{\partial x}w^h}_{L^2(K)} \right) } \,.
\end{alignat}\label{taueffnorms}%
\end{subequations}

The inverse inequalities in these expressions are computable by hand \cite{harari1992c}. For linear, quadratic and cubic coarse-scale basis function, the relations between the parameters $\tau_{\text{eff}}$ and $\tau$, and $\gamma_{\text{eff}}$ and $\gamma$ become:
\begin{align}
    \tau_{\text{eff}},\, \gamma_{\text{eff}}  \approx \begin{cases} \begin{alignedat}{8}
    &\tau, &&\gamma  && \text{for }P=1\,, \\ 
    &12\,\tau, &&\sqrt{3}\,\gamma  && \text{for }P=2\,, \\ 
    &180\,\tau, \,\,&& 2\sqrt{5}\,\gamma  \qquad && \text{for }P=3\,.
    \end{alignedat}\end{cases} \label{taugameff}
\end{align}


\subsubsection{$\tau$-parameter approximation for $P\in \{1,2,3\}$}
\label{ssec:TauEst}

In literature, we find that the $\tau$-parameter that is used with higher-order basis functions is often the same as that for $P=1$ (i.e., obtained from the element local Green's function), sometimes with a $P$-dependent mesh size scaling. 
In this section, we propose an approximation of $\tau$ for linear, quadratic and cubic elements based on the actual fine-scale Green's functions. 
These are devised such that they limit to the exact expressions in the advection ($\tau_a$) or diffusion ($\tau_d$) dominated cases. Using the definition of $\tau$ from \cref{taudef}, together with the fine-scale Green's functions from \ref{sec:fine-scale green}, the following exact expressions for $\tau$ may be computed for $P=1,2$ and $3$ respectively \cite{Brooks1982,Hughes1995,Hughes2007}:
\begin{subequations}
\begin{alignat}{6}
    &\tau_1 &&= \frac{h}{2|\B{a}|} \left (\frac{2+Pe -(2-Pe)\exp(Pe)}{-Pe+Pe \exp(Pe)} \right)\, =: \frac{h}{2|\B{a}|} \xi_1(Pe) \label{taudefP1} \,, \\
    &\tau_2 &&= \frac{h}{72|\B{a}|} \left (\frac{ 12 + 6Pe + Pe^2 - (  12 - 6Pe + Pe^2 ) \exp(Pe)}{ - 2Pe - Pe^2 + ( 2Pe - Pe^2 )\exp(Pe)}\right) \,=:\frac{h}{2|\B{a}|} \xi_2(Pe) \label{taudefP2} \,, \\
    &\tau_3 &&= \frac{h}{1800|\B{a}|} \left (\frac{ 120+60 Pe+12 Pe^2+Pe^3 - (120-60 Pe+12 Pe^2-Pe^3) \exp(Pe)}{-12Pe - 6 Pe^2- Pe^3+(12 Pe-6 Pe^2 + Pe^3)\exp(Pe)}\right)  \nonumber \\
    &       &&\hspace{0.5cm}=:\frac{h}{2|\B{a}|} \xi_3(Pe) \,, \label{taudefP3} 
\end{alignat}\label{taudefP123}%
\end{subequations}
where $Pe =\frac{|\boldsymbol{a}|\,h}{\kappa}$ is the element P\'{e}clet number and $\xi$ is the \textit{upwind function}. From these equations we obtain the following advective and diffusive limits:
\begin{subequations}
\begin{alignat}{4}
&\tau_{1,a} := \lim\limits_{Pe\rightarrow\infty}\!\tau_1 = \frac{h}{2\,|\boldsymbol{a}|}\,,\qquad\qquad && \tau_{1,d} := \lim\limits_{Pe\rightarrow 0^+}\!\tau_1   = \frac{h^2}{12 \kappa}\,, \label{tau_ad_1d_p1} \\
&\qquad\tau_{2,a}  = \frac{h}{72\,|\boldsymbol{a}|} \,, \qquad\qquad&& \qquad\tau_{2,d}  =  \frac{h^2}{720 \kappa} \,,\label{tau_ad_1d_p2} \\
&\,\,\,\quad\tau_{3,a} = \frac{h}{1800\,|\boldsymbol{a}|} \,, \qquad\qquad&& \,\,\,\quad\tau_{3,d} =  \frac{h^2}{25200 \kappa} \,.\label{tau_ad_1d_p3} 
\end{alignat}\label{tau_ad_1d}%
\end{subequations}

The following approximation strategy for $\tau$ is used frequently in stabilized methods~\cite{Hughes1986,Tezduyar1991,Shakib1991,Tezduyar2000,Bazilevs2007}: 
\begin{align}
   \tau \approx  \frac{1}{\sqrt{\tau_a^{-2}+ \tau_d^{-2}}}  \,.
    \label{tauapproxdef}
\end{align}
To determine the effectiveness of the scaling for the various polynomial orders, we substitute \cref{tau_ad_1d} into (\ref{tauapproxdef}). In all cases, we can rewrite the expression to obtain the effective approximate upwind function. For example, for linear elements:
\begin{align}
    {\tau}_1 &\approx \frac{1}{\sqrt{\frac{4\,|\boldsymbol{a}|^2}{h^2}+  \frac{144 \kappa^2}{h^4}}} = \frac{h}{2|\B{a}|} \frac{1}{\sqrt{1+36 Pe^{-2}}} =: \frac{h}{2|\B{a}|} \,\tilde{\xi}_1(Pe) .
    \label{Xiapprox}
\end{align}
 \Cref{fig:xi} illustrates how the approximate upwind functions $\tilde{\xi}(Pe)$ relates to the exact upwind functions of \cref{taudefP123}. The figure shows that the approximation of $\tau$ according to \cref{tauapproxdef} has the correct asymptotic limits, and converges to these limits at the correct rates. We observe that this holds for each polynomial order.


\begin{figure}[!ht]
    \centering
    \subfloat[Upwind function, scaling of $\tau$.\label{fig:xi}]{\includegraphics[width=0.47\linewidth]{./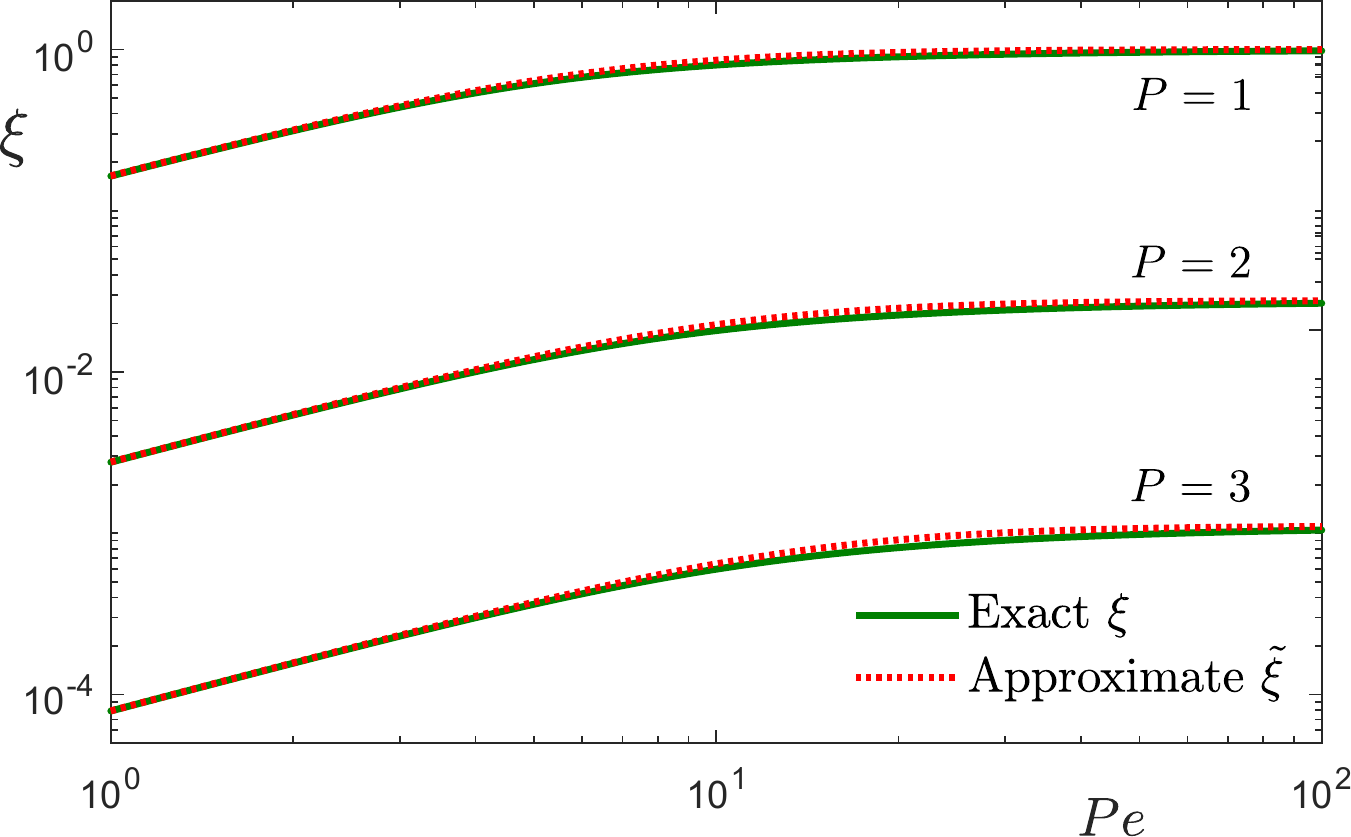}}\hspace{0.3cm}
    \subfloat[Boundary function, scaling of $\gamma$.\label{fig:eta}]{\includegraphics[width=0.47\linewidth]{./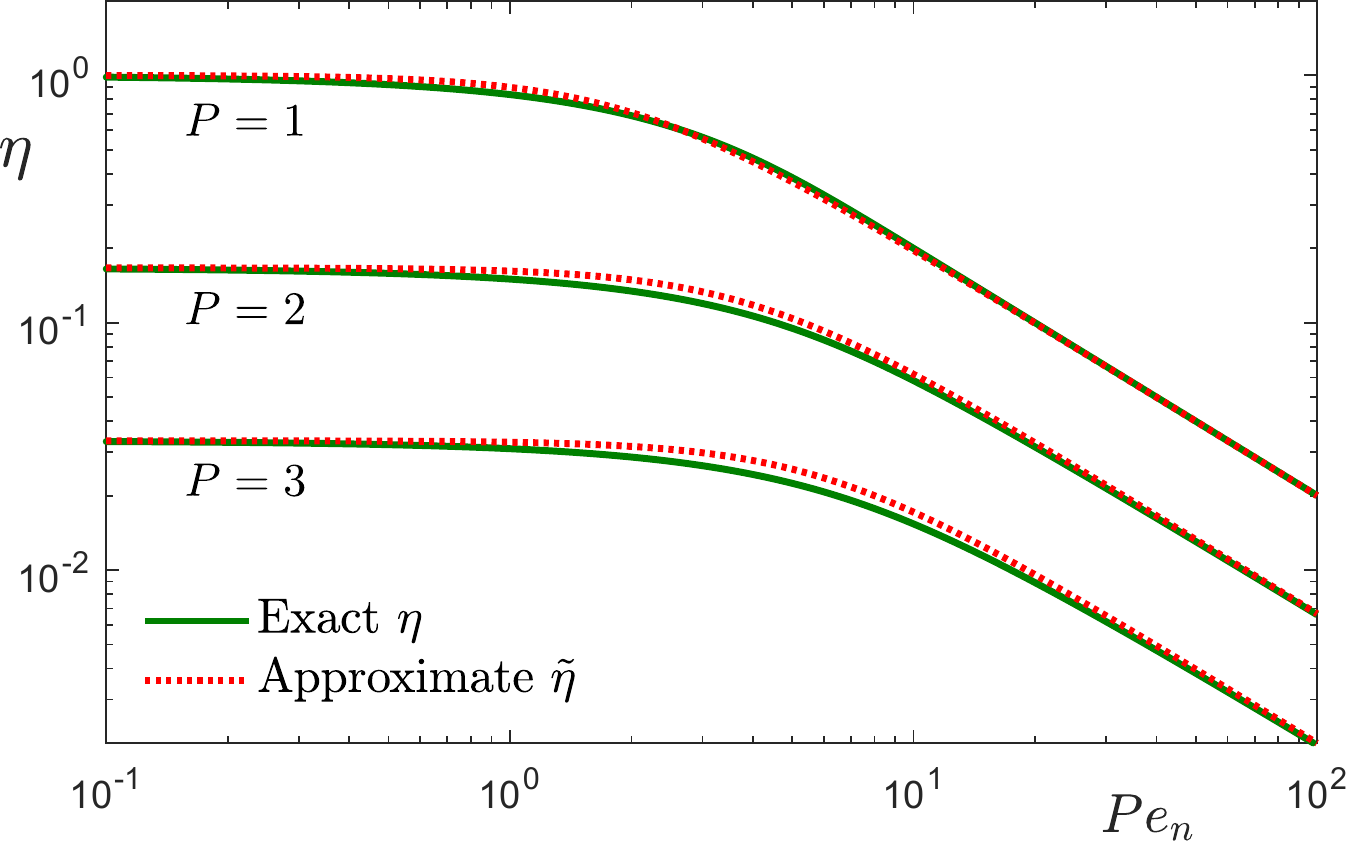}}
    \caption{Exact $\xi$ and $\eta$ functions and their approximations for the one-dimensional case. Showing the correct scaling behavior.}
    \label{fig:scaling}
\end{figure}

\subsubsection{$\gamma$-parameter approximation for $P\in \{1,2,3\}$}
\label{ssec:GamEst}
In a similar sense, we wish to construct an approximate $\gamma$, based on generic (spatial dimension independent) parameters, that share the asymptotic scaling behavior of the exact one-dimensional expression. The exact expressions of $\gamma$ may be computed from \cref{gamdef} as:
\begin{subequations}
\begin{alignat}{6}
    &\gamma_1 &&= \frac{h}{2}   \left(\frac{2 + 2 Pe_n -  2\exp( Pe_n )}{Pe_n -Pe_n\exp( Pe_n )}\right) =: \frac{h}{2} \,\eta_1 (Pe_n)\,, \label{gamdefP1}\\[0.2cm]
    &\gamma_2 &&= \frac{h}{12}\!\left(\dfrac{12  + 8Pe_n +2 Pe_n^2 - ( 12 - 4 Pe_n)\exp( Pe_n ) }{2Pe_n + Pe_n^2 - (2Pe_n - Pe_n^2 )\exp( Pe_n ) }\right) =: \frac{h}{2} \,\eta_2 (Pe_n)\,, \label{gamdefP2}  \\[0.2cm]  
    &\gamma_3 &&= \frac{h}{60}\!\left(\dfrac{120 + 72 Pe_n + 18 Pe_n^2+ 2 Pe_n^3   - (120 - 48 Pe_n + 6 Pe_n^2 )\exp( Pe_n ) }{12Pe_n  + 6 Pe_n^2 + Pe_n^3 - (12Pe_n - 6Pe_n^2 + Pe_n^3 )\exp( Pe_n )}\right) =: \frac{h}{2} \,\eta_3 (Pe_n) ,\label{gamdefP3}
\end{alignat}\label{gamdefP123}%
\end{subequations}
for $P=1,2$ and $3$ respectively. $Pe_n =\frac{\boldsymbol{a}\cdot \boldsymbol{n}\,h}{\kappa}$ is a boundary type element P\'{e}clet number. Since we only make use of $\gamma$ on the outflow boundary ($\B{a}\cdot \B{n}\geq 0$), we exclusively consider $Pe_n\geq 0$ in the following. The advective and diffusive limits of these expressions are:
\begin{subequations}
\begin{alignat}{4}
&\gamma_{1,a} := \lim\limits_{Pe_n\rightarrow\infty}\!\gamma_1 = 0 \,, \qquad\qquad && \gamma_{1,d} := \lim\limits_{Pe_n\rightarrow 0^+}\!\gamma_1   = \frac{h}{2}\,,\label{gamma_ad_1d_p1} \\
&\quad\gamma_{2,a}  = 0 \,, \qquad\qquad\qquad\quad&&\quad\gamma_{2,d}  =  \frac{h}{12}\,,\label{gamma_ad_1d_p2} \\
&\quad\gamma_{3,a} = 0 \,, \qquad\qquad\qquad\quad&&\quad\gamma_{3,d} =  \frac{h}{60}\,.\label{gamma_ad_1d_p3} 
\end{alignat}\label{gamma_ad_1d}%
\end{subequations}
Since these limits do not depend on $\B{a}$ nor on $\kappa$, the approximation strategy of \cref{tauapproxdef} is not viable. Instead, we deduce the following scaling of $\gamma$ based on its definition from \cref{gamdef}:
\begin{align}
    \gamma \propto \big| \mathbb{H} \big|\, \hat{\tau}, \label{gammaapproxtautilde}
\end{align}
where $|\mathbb{H}|$ represents the scaling induced by $\mathbb{H}$ as defined in \cref{H}, and $\hat{\tau}$ is a modified $\tau$-like quantity that takes into account the difference in domain of integration between $\gamma$ and $\tau$ in \cref{taugamdef}. $\mathbb{H}$ scales linearly with $\kappa$ and inversely with length, such that we may choose $|\mathbb{H}|\propto \sqrt{\kappa \tau_d^{-1}}$. By adopting a similar approximation for $\hat{\tau}$ as \cref{tauapproxdef}, we obtain:
\begin{align}
    \gamma \approx c_s \sqrt{\kappa \tau_d^{-1}}\sqrt{\frac{1}{c_1\tau_a^{-2}+ c_2\tau_d^{-2}}} = c_s \sqrt{ \frac{\kappa}{c_1 \tau_d \tau_a^{-2}+c_2 \tau_d^{-1}}}. \label{gamapproxdef}
\end{align}
$c_s$ takes into account the shape effect of the element as the measure of the relevant boundary versus the measure of the element interior: 
\begin{align}
   c_s  := \frac{ h |F| }{ |K| }  \label{cs}\,,
\end{align}
where the multiplication with $h$ ensures a mesh size independent scaling, and $h$ should be the representative element size that is used in the approximation of $\tau$ through $\tau_a$ and $\tau_d$. In this article we use the longest element edge. In the one-dimensional case $|F| = 1$ and $|K|=h$, such that $c_s = 1$. 

The coefficients $c_1$ and $c_2$ are introduced to capture the difference in scaling between $\tau$ and $\hat{\tau}$. They can be determined from \cref{gamapproxdef} by ensuring the correct limiting behavior of $\gamma$ in the one-dimensional case. For example, for linear elements, substituting the limits of \cref{tau_ad_1d_p1} results in:
\begin{align}
    {\gamma}_1 \approx \frac{h}{2}\,\sqrt{ \frac{1}{\frac{1}{12} c_1 Pe_n^2+3\,c_2 }} =:  \frac{h}{2} \, \tilde{\eta}_1(Pe_n) . \label{etaapprox}
\end{align}
By ensuring that \cref{etaapprox} has the same asymptotic limits as \cref{gamdefP123} and also has the same convergence rate towards zero, we obtain $c_1=3$ and $c_2=1/3$. The same strategy results in $c_1 = 1.25$ and $c_2 = 0.2$ for $P=2$ and $c_1 =7/9$ and $c_2 = 1/7$ for $P=3$. \Cref{fig:eta} shows the approximate and exact boundary functions and confirms that the approximation of $\gamma$ displays the correct asymptotic scaling behavior for all polynomial orders.

\subsection{Complete finite element formulation}
\label{ssec:summary}

With all the modeling terms included, the finite element formulation becomes: 
\begin{align}
&\text{Find } \phi^h \in \mathcal{W}^h \text{ s.t. } \forall\, w^h \in \WW^h:\nonumber\\
\begin{split}
&\hspace{5mm}B(w^h,\phi^h) = B_{\text{A}}(w^h,\phi^h) + B_{\text{D}}(w^h,\phi^h)  +  B_{\text{VMS,}\tilde{\Omega}}(w^h,\phi^h)  +  B_{\text{VMS,}\partial\Omega_D^+}(w^h,\phi^h)   \\
&\hspace{8mm}   = \big( w^h , f \big)_\Omega    \! + \big\langle w^h , g_N \big \rangle_{\partial\Omega_N} \!\! - \big\langle  \B{a} \cdot \B{n}\,  w^h ,\phi_D \big\rangle_{\partial\Omega_D^-}  \! - \big\langle  \kappa \pd_n w^h  , \phi_D \big\rangle_{\partial\Omega_D} \!\!+ \big\langle w^h ,  \kappa\beta \phi_D \big \rangle_{\partial\Omega_D} \hspace{1cm}
   \\
&\hspace{13mm} 
  + \big(\B{a}\cdot\nabla w^h \, \tau_{\text{eff}}, f \big)_{\Omega} +\big \langle\, \B{a}\cdot \!\nabla w^h \, \gamma_{\text{eff}} ,\phi_D\big\rangle_{\partial \Omega_D^+},\hspace{0.7cm}
 \end{split}\raisetag{1.7cm} \label{finalFEM}
\end{align}
where the advection and diffusion parts of the bilinear form are:
\begin{subequations}
\begin{alignat}{3}
  &B_{\text{A}}(w^h,\phi^h) = -\big(\B{a}\cdot \nabla w^h,\phi^h\big)_\Omega + \big \langle \B{a} \cdot \B{n}\, w^h,\phi^h\big \rangle_{\partial \Omega^+} ,\label{BA}\\
  \begin{split}
      &B_{\text{D}}(w^h,\phi^h) = \big(\kappa \nabla w^h, \nabla \phi^h\big)_\Omega 
      - \big \langle \kappa w^h, \partial_n \phi^h \big \rangle_{\partial \Omega_D} \!\! -\big \langle \kappa \partial_n w^h, \phi^h \big \rangle_{\partial \Omega_D} \!\!+ \big \langle \kappa \beta w^h, \phi^h \big \rangle_{\partial \Omega_D},
  \end{split} \label{BD}
\end{alignat}
\end{subequations}
and where the two variational multiscale components are:
\begin{subequations}
\begin{alignat}{3}
  &B_{\text{VMS,}\tilde{\Omega}}(w^h,\phi^h) =  \big( \B{a} \cdot \!\nabla w^h\, \tau_{\text{eff}}, \B{a}\cdot \!\nabla \phi^h- \nabla\cdot \kappa\nabla \phi^h \big)_{\tilde{\Omega}} ,\label{BVMS1} \hspace{5cm}\\
  &B_{\text{VMS,}\partial\Omega_D^+}(w^h,\phi^h) = \big \langle \B{a}\cdot \!\nabla w^h \, \gamma_{\text{eff}} ,\phi^h\big\rangle_{\partial \Omega_D^+}, \label{BVMS2}
\end{alignat}
\end{subequations}
with $\tilde{\Omega}$ the sum of open element domains.

Expressions for the parameters $\tau_\text{eff}$ and $\gamma_\text{eff}$ are collected in \Cref{tab:cs}. These expressions take into account all the considerations discussed in \cref{ssec:TauEst,ssec:GamEst,ssec:OpEst}. As the table shows, we have formulated all model parameters such that they depend exclusively on $\tau_{1,a}$ and $\tau_{1,d}$, i.e., those relating to linear elements. The exact approach for computing these limiting values remains flexible. For example, one could incorporate the Jacobian of the element mapping \cite{Bazilevs2007,Bazilevs2007weak_b}, use element local length-scales and P\'eclet numbers \cite{Tezduyar2000}, or use the analytical expressions of \cref{tau_ad_1d_p1} and the element diameter. 

\begin{table}[bht]
\caption{Overview of $\tau_{\text{eff}}$ and $\gamma_{\text{eff}}$ expressions for different elements and polynomial degrees.}
\label{tab:cs}
\centering
\begin{tabular}{lcccc} \hline\hline\\[-0.3cm]
        & $P=1$                               & \multicolumn{2}{c}{$P=2$  }                            & $P=3$   \\[0.1cm] \hline
        &                                     &                         &           &         \\[-0.2cm]
$\tau_{\text{eff}}$ :  & $\sqrt{ \dfrac{1}{  \tau_{1,a}^{-2}+  \tau_{1,d}^{-2}  } }$  & \multicolumn{2}{c}{$\sqrt{ \dfrac{1}{ 9 \,\tau_{1,a}^{-2}+  25 \,\tau_{1,d}^{-2}  } }$ } & $\sqrt{ \dfrac{1}{ 25 \,\tau_{1,a}^{-2}+ \tfrac{1225}{9}\, \tau_{1,d}^{-2}  } }$ \\[0.6cm]
$\gamma_{\text{eff}}$ :  & $c_s \sqrt{ \dfrac{\kappa}{3 \tau_{_{1,d}} \tau_{1,a}^{-2}+\frac{1}{3} \tau_{1,d}^{-1}}}$  & \multicolumn{2}{c}{$c_s \sqrt{ \dfrac{\kappa}{9 \tau_{_{1,d}} \tau_{1,a}^{-2}+ 4 \tau_{1,d}^{-1}}}$} & $c_s \sqrt{ \dfrac{\kappa}{15\tau_{_{1,d}} \tau_{1,a}^{-2}+15 \tau_{1,d}^{-1}}}$  \\[0.8cm]\hline\hline
         &   &   &   &   \\[-0.2cm]
\raisebox{1cm}{Element: } & \raisebox{0.8cm}{\includegraphics[width=0.12\textwidth]{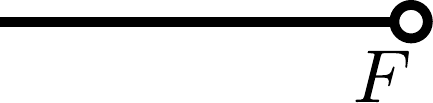}}&\hspace{-0.1cm}\includegraphics[width=0.11\textwidth]{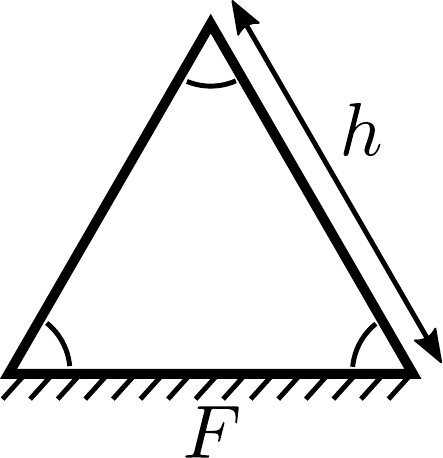}\hspace{0.1cm} &\hspace{0.4cm}\includegraphics[width=0.13\textwidth]{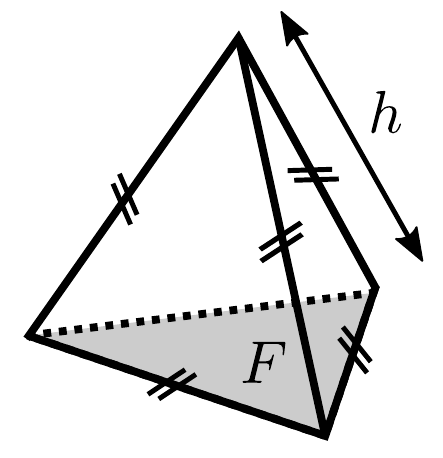} \hspace{-0.4cm} & \raisebox{0.2cm}{\includegraphics[width=0.13\textwidth]{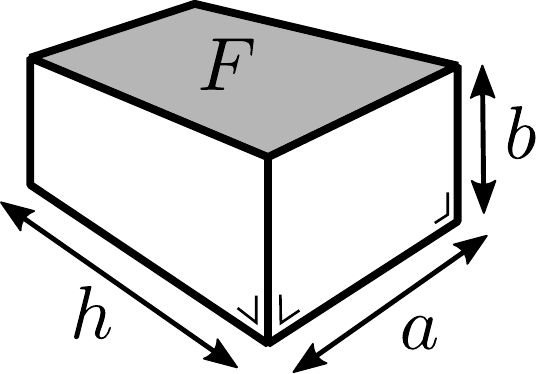}}\\
$c_s$ (\cref{cs}): \hspace{-2cm} &         1                         &         \hspace{-0.5cm}  $\frac{4}{\sqrt{3}}$                              &      \hspace{0.4cm}  $2\sqrt{\frac{2}{3}}$                 &           $h/b$                      \\[0.3cm] \hline\hline
\end{tabular}
\end{table}

\section{Analysis of existence and uniqueness}\label{sec:coerc}

As touched upon in \textit{Remark 4} in \cref{ssec:H10model}, the uniqueness of the fine-scale inversion is not a trivial issue. 
Additionally, the required simplification and modeling steps discussed in \cref{ssec:Generalization} may raise further questions regarding the existence of the approximate coarse-scale solution. In a finite-dimensional functional setting, existence and uniqueness follow directly from the coercivity of the bilinear form, which we analyze in this section.

In the analysis, we assume that $\B{a}$ and $\kappa$ are constant in $\Omega$ and that the grid is (quasi) uniform, such that also $\beta$, $\tau_a$ and $\tau_d$ can be chosen as global constants. The derivations can trivially be modified for non-uniform grids or non-constant $\B{a}$ and $\kappa$ and element-wise parameters $\beta$, $\tau_a$ and $\tau_d$. We further assume that the parameter $\tau_{\text{eff}}$ satisfies:
\begin{equation}
\tau_{\text{eff}} \leq C_d \tau_{1,d} \leq \inf\limits_{w^h\in\WW^h} \frac{1}{2 \kappa } \frac{\norm{ \nabla w^h}^2_{\Omega}}{\norm{\Delta w^h}^2_{\tilde{\Omega}} } \propto \frac{h^2}{\kappa}
\label{eq:tau_dCondition}
\end{equation}
The first inequality is satisfied due to the harmonic mean structure of $\tau_{\text{eff}}$, where the $C_d$'s follow directly from \cref{tab:cs} as 1, $\tfrac{1}{5}$ and $\tfrac{3}{35}$ for linear, quadratic and cubic basis functions respectively. On a one-dimensional mesh, this means that $C_d \tau_{1,d}=\frac{h^2}{12 \kappa}$, $\frac{h^2}{60 \kappa}$ and $\frac{h^2}{140 \kappa}$. In this one-dimensional case, the inverse estimate in \cref{eq:tau_dCondition} may be explicitly computed, resulting in maxima of $\infty$, $\frac{h^2}{24 \kappa}$ and $\frac{h^2}{120 \kappa}$ for polynomial orders of 1, 2 and 3. The condition of \cref{eq:tau_dCondition} is thus satisfied. We expect similar results in multiple spatial dimensions for meshes with reasonable quality.

Finally, we require that $\beta$ satisfies:
\begin{align}
 \beta \geq 4 \, (T_1 + c_s^2 T_2)  \propto\frac{1}{h},
\label{eq:betaCondition}
\end{align}
with:
\begin{subequations}\begin{align}
T_1 &= \sup\limits_{w^h\in\WW^h} \frac{\norm{\partial_n w^h}^2_{\partial\Omega_D}}{\norm{\nabla w^h}^2_\Omega}\propto\frac{1}{h}\label{eq:T_1}\,, \\
T_2 &= \sup\limits_{w^h\in\WW^h} \frac{\norm{\B{a}\cdot\nabla w^h}^2_{\partial\Omega_D^+}}{\norm{\B{a}\cdot \nabla w^h}^2_\Omega}\propto\frac{1}{h}\,.\label{eq:T_2}
\end{align}\end{subequations}

We carry out the coercivity proof for the different components of the bilinear forms separately, and then look at the formulation as a whole. We start with determining a relation between the model parameters $\gamma_{\text{eff}}$ and $\tau_{\text{eff}}$.

\begin{lem}\label{lem:ineq} 
For $P=1$, $2$ or $3$, the expressions from \cref{tab:cs} are such that $\gamma_{\text{eff}}$ is bounded by $\tau_{\text{eff}}$ according to:
\begin{align}
 \gamma_{\text{eff}}^2 \leq 3 c_s^2\, \kappa \tau_{\text{eff}}. \label{lem1}
\end{align}
\end{lem}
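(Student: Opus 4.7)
The plan is to reduce \cref{lem1} to a pair of elementary polynomial inequalities in the limiting parameters $\tau_{1,a}$ and $\tau_{1,d}$ that can be verified by direct substitution for each $P \in \{1,2,3\}$. Observing the common algebraic structure in \cref{tab:cs}, I would first write
\begin{align*}
\tau_{\text{eff}}^{-2} = \alpha_1 \tau_{1,a}^{-2} + \alpha_2 \tau_{1,d}^{-2}, \qquad
c_s^2 \kappa\, \gamma_{\text{eff}}^{-2} = \beta_1\, \tau_{1,d}\, \tau_{1,a}^{-2} + \beta_2\, \tau_{1,d}^{-1},
\end{align*}
where the nonnegative coefficients $(\alpha_1,\alpha_2,\beta_1,\beta_2)$ are $(1,1,3,\tfrac{1}{3})$, $(9,25,9,4)$ and $(25,\tfrac{1225}{9},15,15)$ for $P=1,2,3$, respectively.

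Next, I would divide \cref{lem1} by $c_s^2\kappa > 0$, take reciprocals, and square both sides, obtaining the equivalent statement
\begin{align*}
9\bigl(\beta_1 \tau_{1,d} \tau_{1,a}^{-2} + \beta_2 \tau_{1,d}^{-1}\bigr)^2 \;\geq\; \alpha_1 \tau_{1,a}^{-2} + \alpha_2 \tau_{1,d}^{-2}.
\end{align*}
Expanding the square and collecting monomials in $\tau_{1,a}^{-1}$ and $\tau_{1,d}^{-1}$ gives
\begin{align*}
9\beta_1^2\, \tau_{1,d}^2 \tau_{1,a}^{-4} \;+\; \bigl(18\beta_1\beta_2 - \alpha_1\bigr)\, \tau_{1,a}^{-2} \;+\; \bigl(9\beta_2^2 - \alpha_2\bigr)\, \tau_{1,d}^{-2} \;\geq\; 0.
\end{align*}
The leading quartic term is manifestly nonnegative, so it suffices to verify the two scalar conditions
\begin{align*}
18\beta_1\beta_2 \geq \alpha_1, \qquad 9\beta_2^2 \geq \alpha_2,
\end{align*}
for each of the three parameter triples listed above.

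Finally, I would check these conditions by direct computation: for $P=1$ one gets $18 \geq 1$ and $1 \geq 1$; for $P=2$, $648 \geq 9$ and $144 \geq 25$; for $P=3$, $4050 \geq 25$ and $2025 \geq \tfrac{1225}{9}$. Since all reductions (dividing by $c_s^2\kappa$, inverting, and squaring positive quantities) are reversible, \cref{lem1} follows in each case. The main ``obstacle'' here is essentially bookkeeping rather than insight; the one mildly non-obvious observation is that, despite the $P$-dependent coefficients, the expressions in \cref{tab:cs} share a common form that permits a single unified algebraic manipulation instead of three separate case analyses.
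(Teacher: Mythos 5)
Your proof is correct: the reduction to the two coefficient conditions $18\beta_1\beta_2 \geq \alpha_1$ and $9\beta_2^2 \geq \alpha_2$ is valid (the reciprocal/squaring steps preserve the inequality since all quantities are positive), and the numerical checks for $P=1,2,3$ are all accurate, including the tight case $9\beta_2^2=\alpha_2$ at $P=1$ which shows the constant $3$ is sharp. This is essentially the same approach as the paper's — a direct case-by-case algebraic verification exploiting the common form of the entries in \cref{tab:cs} — differing only in bookkeeping: the paper routes through the intermediate bound $\tau_{\text{eff}} \leq C_d\tau_{1,d}$ and a $\max/\min$ estimate on a ratio of quadratic forms, whereas you clear denominators and compare monomial coefficients.
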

\begin{proof}
We write the $\gamma_{\text{eff}}$ and $\tau_{\text{eff}}$ expressions from \cref{tab:cs} in the following general form:
\begin{align}
  &  \tau_{\text{eff}} = \sqrt{ \frac{1}{(C_a \tau_{1,a})^{-2} + (C_d \tau_{1,d})^{-2} } }\\
  &  \gamma_{\text{eff}}  = c_s \sqrt{\frac{\kappa}{\tau_{1,d}}}\sqrt{ \frac{1}{(C_1 \tau_{1,a})^{-2} + (C_2 \tau_{1,d})^{-2} } }
\end{align}
Dividing $\gamma_{\text{eff}}$ by $\tau_{\text{eff}}$ and squaring gives:
\begin{align}
    \left( \frac{\gamma_{\text{eff}}}{\tau_{\text{eff}}} \right)^2 = c_s^2 \frac{\kappa}{\tau_{1,d}} \frac{(C_a \tau_{1,a})^{-2} + (C_d \tau_{1,d})^{-2} }{(C_1 \tau_{1,a})^{-2} + (C_2 \tau_{1,d})^{-2} }
\end{align}
After multiplying both sides by $C_d \tau_{1,d}$ and using $ C_d \tau_{1,d}\geq \tau$, we obtain:
\begin{align}
    C_d \tau_{1,d} \left( \frac{\gamma_{\text{eff}}}{\tau_{\text{eff}}} \right)^2 = c_s^2 \kappa \frac{C_d C_a^{-2} \tau_{1,a}^{-2} + C_d^{-1} \tau_{1,d}^{-2} }{(C_1 \tau_{1,a})^{-2} + (C_2 \tau_{1,d})^{-2} } \geq \tau_{\text{eff}}\left( \frac{\gamma_{\text{eff}}}{\tau_{\text{eff}}} \right)^2 = \frac{\gamma_{\text{eff}}^2}{\tau_{\text{eff}}}
\end{align}
The fraction may be bound from above as:
\begin{align}
    \frac{\gamma_{\text{eff}}^2}{\tau_{\text{eff}}} \leq c_s^2 \kappa \frac{\max( \tfrac{C_d}{C_a^2} , \tfrac{1}{C_d} )  ( \tau_{1,a}^{-2} + \tau_{1,d}^{-2}) }{\min(\tfrac{1}{C_1^2},\tfrac{1}{C_2^2} ) ( \tau_{1,a}^{-2} + \tau_{1,d}^{-2} ) }  = c_s^2 \kappa \frac{\max( \tfrac{C_d}{C_a^2} , \tfrac{1}{C_d} )   }{\min(\tfrac{1}{C_1^2},\tfrac{1}{C_2^2} ) } = \begin{cases}
     3\, c_s^2 \kappa \quad&\text{for }P=1\,,\\
     \tfrac{5}{4}\,c_s^2 \kappa \quad&\text{for }P=2\,,\\
     \tfrac{7}{9}\,c_s^2 \kappa \quad&\text{for }P=3\,.
    \end{cases}
\end{align}
\Cref{lem1} follows from the maximum of the three cases.
\end{proof}

\begin{lem}\label{lem:Adv} 
The bilinear form in \cref{BA} satisfies the following coercivity result:
\begin{align}
  B_\text{A}(w^h,w^h) \geq   \onehalf\norm{ \sqrt{|\B{a}\!\cdot\! \B{n}|}w^h}^2_{\partial \Omega} \quad \forall\,w^h \in \WW^h, \label{lem2}
\end{align}
where the norms are $L^2$ norms on the indicated domains. 
\end{lem}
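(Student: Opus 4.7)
The plan is a short, direct calculation exploiting that $\B{a}$ is solenoidal. Writing $-\B{a}\cdot\nabla w^h\, w^h = -\tfrac{1}{2}\B{a}\cdot\nabla (w^h)^2$ and applying the divergence theorem, I get
\begin{align*}
-\bigl(\B{a}\cdot\nabla w^h,\,w^h\bigr)_{\Omega}
&= -\tfrac{1}{2}\bigl(\B{a}\cdot\nabla (w^h)^2,\,1\bigr)_{\Omega} \\
&= -\tfrac{1}{2}\bigl\langle \B{a}\cdot\B{n},\,(w^h)^2\bigr\rangle_{\partial\Omega} + \tfrac{1}{2}\bigl((\nabla\cdot\B{a})\,w^h,\,w^h\bigr)_{\Omega},
\end{align*}
and the last term vanishes because $\nabla\cdot\B{a}=0$ by assumption in \cref{Strong}.

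Next I would plug this into the definition of $B_\text{A}$ and split the boundary integral into its inflow and outflow parts according to the sign of $\B{a}\cdot\B{n}$:
\begin{align*}
B_\text{A}(w^h,w^h)
&= -\tfrac{1}{2}\bigl\langle \B{a}\cdot\B{n},\,(w^h)^2\bigr\rangle_{\partial\Omega^+}
   -\tfrac{1}{2}\bigl\langle \B{a}\cdot\B{n},\,(w^h)^2\bigr\rangle_{\partial\Omega^-}
   + \bigl\langle \B{a}\cdot\B{n},\,(w^h)^2\bigr\rangle_{\partial\Omega^+} \\
&= \tfrac{1}{2}\bigl\langle \B{a}\cdot\B{n},\,(w^h)^2\bigr\rangle_{\partial\Omega^+}
   -\tfrac{1}{2}\bigl\langle \B{a}\cdot\B{n},\,(w^h)^2\bigr\rangle_{\partial\Omega^-}.
\end{align*}

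To finish, I would use the sign conventions defining $\partial\Omega^\pm$: on $\partial\Omega^+$ we have $\B{a}\cdot\B{n}=|\B{a}\cdot\B{n}|$, while on $\partial\Omega^-$ we have $-\B{a}\cdot\B{n}=|\B{a}\cdot\B{n}|$. The two contributions therefore combine into a single positive boundary integral,
\begin{equation*}
B_\text{A}(w^h,w^h) = \tfrac{1}{2}\bigl\langle |\B{a}\cdot\B{n}|,\,(w^h)^2\bigr\rangle_{\partial\Omega}
= \tfrac{1}{2}\bigl\|\sqrt{|\B{a}\cdot\B{n}|}\,w^h\bigr\|^2_{\partial\Omega},
\end{equation*}
which is in fact equality, and so the stated inequality follows. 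There is no genuine obstacle here; the only point worth being careful about is that the claim is purely about \emph{all} of $\partial\Omega$ on the right-hand side (including the Dirichlet part), which is exactly what the splitting delivers, and that the solenoidal assumption $\nabla\cdot\B{a}=0$ is essential to discard the volume contribution after integration by parts.
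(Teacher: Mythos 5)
Your proof is correct and follows essentially the same route as the paper's: rewrite the advective volume term as $-\tfrac{1}{2}\nabla\cdot(\B{a}(w^h)^2)$ using $\nabla\cdot\B{a}=0$, apply the divergence theorem, and split the resulting boundary integral by the sign of $\B{a}\cdot\B{n}$ to combine with the outflow term. Your observation that the bound actually holds with equality is consistent with the paper's computation as well.
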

\begin{proof}
Direct substitution of $\phi^h=w^h$ into \cref{BA} results in:
\begin{align}
    B_{\text{A}}(w^h,w^h) = -\big(\B{a}\cdot \nabla w^h,w^h\big)_\Omega + \big \langle \B{a}\cdot \B{n}\, w^h,w^h\big \rangle_{\partial \Omega^+}   \label{P1}.
\end{align}
Making use of the property $\nabla\cdot \B{a}=0$, the first term may be rewritten as follows:
\begin{align}
     -\int\limits_{\Omega} \nabla \cdot (\onehalf\B{a}\, (w^h)^2 )= - \onehalf \int\limits_{\partial\Omega} \B{a}\!\cdot\!\B{n}\, (w^h)^2 = \onehalf \int\limits_{\partial\Omega^-} |\B{a}\!\cdot\! \B{n}|\, (w^h)^2  - \onehalf \int\limits_{\partial\Omega^+} |\B{a}\!\cdot\! \B{n}| \, (w^h)^2 .
\end{align}
Substitution into \cref{P1} completes the proof.
\end{proof}

\begin{lem}\label{lem:Dif} 
Under the condition posed by \cref{eq:tau_dCondition}, the bilinear form in \cref{BD} satisfies the following coercivity result:
\begin{align}
  B_\text{D}(w^h,w^h) \geq \onehalf \kappa \, \norm{\nabla w^h}_{\Omega}^2 + (\beta-2T_1)\kappa \norm{w^h}^2_{\partial \Omega_D} \quad \forall\, w^h \in \WW^h, \label{lem3}
\end{align}
where the norms are $L^2$ norms on the indicated domains, and $T_1$ is given in \cref{eq:T_1}. 
\end{lem}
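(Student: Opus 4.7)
The plan is the standard Nitsche coercivity argument: the only nontrivial term in $B_\text{D}(w^h,w^h)$ is the symmetric consistency term $-2\kappa\langle w^h,\partial_n w^h\rangle_{\partial\Omega_D}$, which I will bound from below using Cauchy--Schwarz followed by a Young's inequality splitting tuned to the inverse trace estimate $T_1$.

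First I would write out
\begin{align*}
B_\text{D}(w^h,w^h) = \kappa\|\nabla w^h\|_\Omega^2 \;-\; 2\kappa\bigl\langle w^h,\partial_n w^h\bigr\rangle_{\partial\Omega_D} \;+\; \kappa\beta\|w^h\|_{\partial\Omega_D}^2.
\end{align*}
Then, applying Cauchy--Schwarz followed by Young's inequality with parameter $\epsilon>0$ gives
\begin{align*}
2\kappa\bigl|\langle w^h,\partial_n w^h\rangle_{\partial\Omega_D}\bigr| \;\leq\; \kappa\,\epsilon\,\|\partial_n w^h\|^2_{\partial\Omega_D} \;+\; \frac{\kappa}{\epsilon}\,\|w^h\|^2_{\partial\Omega_D}.
\end{align*}
Invoking the inverse trace inequality \cref{eq:T_1}, namely $\|\partial_n w^h\|^2_{\partial\Omega_D}\leq T_1\|\nabla w^h\|^2_\Omega$, and choosing $\epsilon = 1/(2T_1)$ yields
\begin{align*}
2\kappa\bigl|\langle w^h,\partial_n w^h\rangle_{\partial\Omega_D}\bigr| \;\leq\; \tfrac{1}{2}\kappa\|\nabla w^h\|^2_\Omega \;+\; 2T_1\kappa\,\|w^h\|^2_{\partial\Omega_D}.
\end{align*}

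Subtracting this from the identity for $B_\text{D}(w^h,w^h)$ immediately produces \cref{lem3}. The only subtle point is that the Young's inequality splitting must be chosen so that the $\|\nabla w^h\|^2_\Omega$ coefficient remaining from the $\kappa$-diffusion term is strictly positive (here $\tfrac{1}{2}$), which in turn fixes the coefficient $2T_1$ appearing in front of the boundary norm; this is what forces the sharp threshold $\beta>2T_1$ for the boundary contribution to remain nonnegative (and the factor $4$ in \cref{eq:betaCondition} then provides room for the advective boundary terms handled in subsequent lemmas). Note that the condition \cref{eq:tau_dCondition} on $\tau_{\text{eff}}$ does not actually enter this argument, since $B_\text{D}$ contains no stabilization parameter; it is included in the hypothesis merely because it is part of the standing setup for the coercivity analysis of the full bilinear form $B$.
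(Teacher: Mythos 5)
Your proof is correct and follows essentially the same route as the paper: expand $B_\text{D}(w^h,w^h)$, bound the symmetric consistency term via Young's inequality combined with the inverse trace estimate defining $T_1$, and choose $\epsilon = 1/(2T_1)$. Your side observation that the hypothesis \cref{eq:tau_dCondition} is not actually used in this particular lemma is also accurate.
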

\begin{proof}
Direct substitution of $\phi^h=w^h$ into \cref{BD} results in:
\begin{align}
B_{\text{D}}(w^h,w^h) =  
\kappa \norm{\nabla w^h}_{\Omega}^2 
 -2\,\big \langle \kappa\,\partial_n  w^h , w^h \big \rangle_{\partial \Omega_D} 
 +\beta \kappa\norm{w^h}^2_{\partial \Omega_D} \label{lem3subs}.
\end{align}
By using Young's inequality to bound the nonsymmetric term, we obtain:
\begin{align}
-2\,\big \langle \kappa\, \partial_n  w^h , w^h \big \rangle_{\partial \Omega_D} 
& \geq 
- \epsilon\kappa \norm{ \, \partial_n  w^h }^2_{\partial \Omega_D} 
- \frac{\kappa}{\epsilon} \norm{w^h }^2_{\partial \Omega_D}, \nonumber \\
&  \geq - \epsilon T_1 \kappa \norm{ \nabla w^h }^2_{\Omega} 
- \frac{\kappa}{\epsilon} \norm{w^h }^2_{\partial \Omega_D},
\label{L2Nitsche}
\end{align}
where $T_1$ is defined in \cref{eq:T_1}. Choosing the parameter from Young's inequality as $\epsilon=1/(2T_1)$ completes the proof.
\end{proof}

\begin{lem}\label{lem:VMS1} 
Under the condition posed by \cref{eq:tau_dCondition}, the volumetric variational multiscale term in \cref{BVMS1} satisfies the following coercivity result:
\begin{align}
   B_{\text{VMS,}\tilde{\Omega}}(w^h,w^h) \geq \onehalf \tau_{\text{eff}} \, \norm{\B{a}\cdot\nabla w^h}_{\Omega}^2 - \onequart \kappa \, \norm{\nabla w^h}_{\Omega}^2\quad \forall\, w^h \in \WW^h, \label{eq:lemVMS1}
\end{align}
where the norms are $L^2$ norms on the indicated domains. 
\end{lem}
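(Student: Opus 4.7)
The plan is to expand the bilinear form, bound the cross term involving the second derivative via Young's inequality, and then absorb the resulting $\|\Delta w^h\|_{\tilde{\Omega}}^2$ term into an $H^1$-seminorm using the inverse-estimate-type condition \cref{eq:tau_dCondition}.

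Substituting $\phi^h=w^h$ in \cref{BVMS1}, the bilinear form splits as
\begin{align}
  B_{\text{VMS,}\tilde{\Omega}}(w^h,w^h)
  \,=\,
  \tau_{\text{eff}}\,\norm{\B{a}\cdot\nabla w^h}_{\tilde{\Omega}}^{2}
  \,-\,
  \tau_{\text{eff}}\,\big(\B{a}\cdot\nabla w^h,\,\kappa\,\Delta w^h\big)_{\tilde{\Omega}},
\end{align}
where the first term is identical to $\tau_{\text{eff}}\norm{\B{a}\cdot\nabla w^h}_{\Omega}^{2}$ since element interfaces have measure zero. The only real work is to control the sign-indefinite cross term.

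Next, I apply Young's inequality with parameter $\epsilon=1$ to the cross term:
\begin{align}
  -\,\tau_{\text{eff}}\,\big(\B{a}\cdot\nabla w^h,\,\kappa\,\Delta w^h\big)_{\tilde{\Omega}}
  \,\geq\,
  -\,\tfrac{1}{2}\tau_{\text{eff}}\,\norm{\B{a}\cdot\nabla w^h}_{\tilde{\Omega}}^{2}
  \,-\,
  \tfrac{1}{2}\tau_{\text{eff}}\,\kappa^{2}\,\norm{\Delta w^h}_{\tilde{\Omega}}^{2}.
\end{align}
Half of the coercive $\tau_{\text{eff}}\norm{\B{a}\cdot\nabla w^h}^2$ contribution is thereby retained, leaving an unwanted $\tfrac{1}{2}\tau_{\text{eff}}\kappa^{2}\norm{\Delta w^h}_{\tilde{\Omega}}^{2}$ term.

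Finally, I invoke the hypothesis \cref{eq:tau_dCondition}, which states that $\tau_{\text{eff}}\norm{\Delta w^h}_{\tilde{\Omega}}^{2}\leq \tfrac{1}{2\kappa}\norm{\nabla w^h}_{\Omega}^{2}$ for every $w^h\in\WW^h$. Multiplying by $\tfrac{1}{2}\kappa^{2}$ converts the residual term into the advertised $\tfrac{1}{4}\kappa\norm{\nabla w^h}_{\Omega}^{2}$, which combined with the retained half of the streamline-derivative term yields \cref{eq:lemVMS1}.

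The only genuinely delicate point is the choice of the Young parameter: a different $\epsilon$ would redistribute the two contributions but, together with the sharp inverse estimate in \cref{eq:tau_dCondition}, $\epsilon=1$ is what produces exactly the $\tfrac{1}{2}$/$\tfrac{1}{4}$ coefficients stated in the lemma. The remainder is purely algebraic; no boundary terms appear since \cref{BVMS1} is integrated over $\tilde{\Omega}$ without integration by parts.
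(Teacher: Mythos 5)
Your proof is correct and follows essentially the same route as the paper: substitute $\phi^h=w^h$, apply Young's inequality with unit parameter to the cross term, and absorb $\tfrac{1}{2}\tau_{\text{eff}}\kappa^2\norm{\Delta w^h}^2_{\tilde{\Omega}}$ via \cref{eq:tau_dCondition}. (Your minus sign on the cross term is in fact the consistent one given \cref{BVMS1}; the paper writes a plus there, but this is immaterial since Young's inequality controls the term regardless of sign.)
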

\begin{proof}
Direct substitution of $\phi^h=w^h$ results in:
\begin{align}
B_{\text{VMS,}\tilde{\Omega}}(w^h,w^h) =  \tau_{\text{eff}}\norm{\B{a} \cdot \!\nabla w^h}^2 +\big( \sqrt{\tau_{\text{eff}}}\, \B{a}\cdot \!\nabla w^h, \sqrt{\tau_{\text{eff}}}\, \kappa \Delta w^h \big)_{\tilde{\Omega}}.
\end{align}
With Young's inequality we bound the second term from below:
\begin{align}
B_{\text{VMS,}\tilde{\Omega}}(w^h,w^h) \geq  \tau_{\text{eff}}\norm{\B{a} \cdot \!\nabla w^h}^2 - \onehalf  \tau_{\text{eff}}\norm{\B{a} \cdot \!\nabla w^h}^2 - \onehalf \tau_{\text{eff}} \kappa ^2 \norm{\Delta w^h}^2 .
\end{align}
Using the assumed bound of $\tau_{\text{eff}}$ from \cref{eq:tau_dCondition} completes the proof.
\end{proof}

\begin{lem}\label{lem:VMS2} 
The boundary variational multiscale term in \cref{BVMS2} satisfies the following coercivity result:
\begin{align}
   B_{\text{VMS,}\partial\Omega_D^+}(w^h,\phi^h) \geq -\onequart \tau_{\text{eff}} \norm{\B{a}\cdot \!\nabla w^h}^2_\Omega -3\,c_s^2 T_2\kappa\norm{ w^h}^2_{\partial \Omega_D^+} \quad  \forall\, w^h \in \WW^h,  \label{eq:lemVMS2}
\end{align}
where the norms are $L^2$ norms on the indicated domains. 
\end{lem}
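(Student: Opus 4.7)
The strategy mirrors the proofs of Lemmas \ref{lem:Dif} and \ref{lem:VMS1}: substitute $\phi^h=w^h$, apply Young's inequality with a carefully chosen weight so as to extract a $\tau_{\text{eff}}\|\B{a}\cdot\nabla w^h\|^2$ contribution with the correct prefactor $\tfrac{1}{4}$, and then clean up the remainder using Lemma \ref{lem:ineq} together with the inverse trace estimate $T_2$.

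Concretely, the plan is to split the integrand as
\begin{equation}
(\B{a}\cdot\nabla w^h)\,\gamma_{\text{eff}}\,w^h \;=\; \bigl(\sqrt{\tau_{\text{eff}}}\,\B{a}\cdot\nabla w^h\bigr)\cdot\Bigl(\tfrac{\gamma_{\text{eff}}}{\sqrt{\tau_{\text{eff}}}}\,w^h\Bigr)
\nonumber
\end{equation}
and apply the signed Young's inequality $ab\geq -\tfrac{\epsilon}{2}a^2-\tfrac{1}{2\epsilon}b^2$ pointwise, with a free parameter $\epsilon>0$ to be fixed later. Integrating over $\partial\Omega_D^+$ then yields
\begin{equation}
B_{\text{VMS,}\partial\Omega_D^+}(w^h,w^h) \;\geq\; -\tfrac{\epsilon}{2}\,\tau_{\text{eff}}\,\norm{\B{a}\cdot\nabla w^h}^2_{\partial\Omega_D^+} \;-\; \tfrac{1}{2\epsilon}\,\tfrac{\gamma_{\text{eff}}^2}{\tau_{\text{eff}}}\,\norm{w^h}^2_{\partial\Omega_D^+}.
\nonumber
\end{equation}

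Next I would invoke the discrete trace-type estimate of \cref{eq:T_2}, which gives $\norm{\B{a}\cdot\nabla w^h}^2_{\partial\Omega_D^+}\leq T_2\,\norm{\B{a}\cdot\nabla w^h}^2_\Omega$, and Lemma \ref{lem:ineq}, which gives $\gamma_{\text{eff}}^2/\tau_{\text{eff}}\leq 3c_s^2\kappa$. Substituting both bounds produces
\begin{equation}
B_{\text{VMS,}\partial\Omega_D^+}(w^h,w^h) \;\geq\; -\tfrac{\epsilon T_2}{2}\,\tau_{\text{eff}}\,\norm{\B{a}\cdot\nabla w^h}^2_{\Omega} \;-\; \tfrac{3c_s^2\kappa}{2\epsilon}\,\norm{w^h}^2_{\partial\Omega_D^+}.
\nonumber
\end{equation}
Choosing $\epsilon=1/(2T_2)$ balances the first coefficient at exactly $\tfrac{1}{4}$ and turns the second into $3c_s^2 T_2\kappa$, which is precisely the claimed bound.

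There is no genuine obstacle here: the main point is simply to scale the Young's inequality with the weight $\sqrt{\tau_{\text{eff}}}$ so that the volumetric term absorbs cleanly into the positive contribution furnished by Lemma \ref{lem:VMS1} (the factor $\tfrac14$ is chosen precisely so that adding \cref{eq:lemVMS1} and \cref{eq:lemVMS2} leaves a positive $\tfrac14\tau_{\text{eff}}\norm{\B{a}\cdot\nabla w^h}^2_\Omega$ residual). The only subtlety worth flagging is that Lemma \ref{lem:ineq} relies on $\tau_{\text{eff}}\leq C_d\tau_{1,d}$, which has already been stipulated in \cref{eq:tau_dCondition}, so no additional hypothesis beyond those in force globally in the section is needed.
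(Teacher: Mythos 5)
Your proof is correct and follows essentially the same route as the paper: substitute $\phi^h=w^h$, apply Young's inequality on the boundary pairing, pull the gradient term into the volume via the inverse estimate \cref{eq:T_2}, choose the Young parameter so the volumetric coefficient is $\onequart\tau_{\text{eff}}$, and close with Lemma \ref{lem:ineq}. Your weighting by $\sqrt{\tau_{\text{eff}}}$ is just a reparametrization of the paper's choice $\varepsilon=\tau_{\text{eff}}/(2T_2)$, so the two arguments are identical in substance.
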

\begin{proof}

After substitution of $\phi^h=w^h$, we obtain:
\begin{align}
B_{\text{VMS,}\partial\Omega_D^+}(w^h,w^h) &= \big \langle \B{a}\cdot \!\nabla w^h , \gamma_{\text{eff}}  w^h\big \rangle_{\partial \Omega_D^+} \\
 &\geq 
-\onehalf \varepsilon \norm{\B{a}\cdot \!\nabla w^h}^2_{\partial \Omega_D^+}   
-\onehalf  \frac{\gamma_{\text{eff}}^2}{\varepsilon}\norm{ w^h}^2_{\partial \Omega_D^+} \nonumber  \\
& \geq
-\onehalf \varepsilon T_2\norm{\B{a}\cdot \!\nabla w^h}^2_\Omega
-\onehalf \frac{ \gamma_{\text{eff}}^2 }{\varepsilon}\norm{ w^h}^2_{\partial \Omega_D^+} . 
\end{align}
The first inequality follows from Young's inequality with parameter $\varepsilon$, and the second inequality as well as the parameter $T_2$ originate from the inverse estimate of \cref{eq:T_2}.
Choosing the parameter $\varepsilon=\tau_{\text{eff}}/(2T_2)$ and using the result of \textit{Lemma} \ref{lem:ineq} to relate $\tau_{\text{eff}}$ and $\gamma_{\text{eff}}$ completes the poof. 
\end{proof}

\begin{thm} 
The combined bilinear form of \cref{finalFEM} satisfies the following coercivity result:
\begin{align}
  B(w^h,w^h)  \geq  & \onequart \tau_{\text{eff}} \norm{\B{a}\cdot \!\nabla w^h}^2_{ \Omega} +\onehalf \norm{ \sqrt{|\B{a}\!\cdot\! \B{n}|} w^h}^2_{\partial \Omega}  +\onequart \kappa \norm{\nabla w^h}^2_{\Omega}  
   + \onequart \beta \kappa \norm{ w^h}^2_{\partial \Omega_D}\quad \forall\, w^h \in \WW^h, \label{thm1}
\end{align}\\[-0.5cm]
where the norms are $L^2$ norms on the indicated domains. 
\end{thm}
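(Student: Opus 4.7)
The plan is to aggregate the four preceding lemmas applied with $\phi^h=w^h$, and then show that the hypotheses on $\beta$ in \cref{eq:betaCondition} and on $\tau_{\text{eff}}$ in \cref{eq:tau_dCondition} are precisely those that allow the negative contributions to be absorbed into the positive ones.

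First I substitute $\phi^h=w^h$ into the split
\begin{align*}
B(w^h,w^h) = B_{\text{A}}(w^h,w^h) + B_{\text{D}}(w^h,w^h) + B_{\text{VMS,}\tilde{\Omega}}(w^h,w^h) + B_{\text{VMS,}\partial\Omega_D^+}(w^h,w^h),
\end{align*}
and insert the lower bounds from \cref{lem:Adv,lem:Dif,lem:VMS1,lem:VMS2}. The two $\tau_{\text{eff}} \norm{\B{a}\cdot\nabla w^h}^2_{\Omega}$ contributions (arising from \cref{lem:VMS1,lem:VMS2}) combine to $\onehalf - \onequart = \onequart$, and the two $\kappa \norm{\nabla w^h}^2_{\Omega}$ contributions (from \cref{lem:Dif,lem:VMS1}) similarly combine to $\onehalf - \onequart = \onequart$. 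The advective boundary term $\onehalf \norm{\sqrt{|\B{a}\cdot\B{n}|}\,w^h}^2_{\partial\Omega}$ from \cref{lem:Adv} passes through unchanged, since no other lemma touches it.

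The remaining Dirichlet boundary contributions are $(\beta-2T_1)\kappa\norm{w^h}^2_{\partial\Omega_D}$ from \cref{lem:Dif} and $-3c_s^2 T_2\,\kappa\norm{w^h}^2_{\partial\Omega_D^+}$ from \cref{lem:VMS2}. Since $\partial\Omega_D^+ \subseteq \partial\Omega_D$, the monotonicity $\norm{w^h}^2_{\partial\Omega_D^+} \leq \norm{w^h}^2_{\partial\Omega_D}$ combines these into $(\beta-2T_1-3c_s^2 T_2)\,\kappa\norm{w^h}^2_{\partial\Omega_D}$. Invoking the hypothesis $\beta\geq 4(T_1+c_s^2 T_2)$ from \cref{eq:betaCondition} yields $\tfrac{3}{4}\beta \geq 3T_1 + 3c_s^2 T_2 \geq 2T_1 + 3c_s^2 T_2$, so $\beta - 2T_1 - 3c_s^2 T_2 \geq \onequart \beta$, exactly matching the coefficient in \cref{thm1}.

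There is no genuine obstacle at this stage: the technical work (Young's inequalities with carefully chosen parameters, the inverse estimates defining $T_1$ and $T_2$, and the key relation $\gamma_{\text{eff}}^2 \leq 3c_s^2 \kappa \tau_{\text{eff}}$ of \cref{lem:ineq}) has already been absorbed into the preceding lemmas. What the aggregation highlights retrospectively is that the Young's parameters in \cref{lem:VMS1,lem:VMS2} were calibrated precisely so that exactly half of the coercive $\tau_{\text{eff}}$- and $\kappa$-gradient contributions survive after the negative terms are accounted for, and that the numerical constant $4$ in \cref{eq:betaCondition} is tight enough to produce the $\onequart \beta$ factor on the penalty term while leaving the proof modular.
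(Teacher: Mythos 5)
Your proposal is correct and follows essentially the same route as the paper: substitute $\phi^h=w^h$, sum the lower bounds from \cref{lem:Adv,lem:Dif,lem:VMS1,lem:VMS2}, and absorb the negative contributions using \cref{eq:betaCondition}. The only difference is that you explicitly spell out the final arithmetic $\beta-2T_1-3c_s^2T_2\geq\onequart\beta$ (which is a correct and slightly more complete presentation of the step the paper leaves implicit).
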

\begin{proof}
Direct substitution of $\phi^h=w^h$ in the bilinear form, and using the results of \textit{Lemmas} \ref{lem:Adv} to \ref{lem:VMS2}, results in:
\begin{align}
  B(w^h,w^h)  &=\,  B_{\text{A}}(w^h,w^h) + B_{\text{D}}(w^h,w^h)  + B_{\text{VMS,}\tilde{\Omega}}(w^h,w^h) + B_{\text{VMS,}\partial\Omega_D^+}(w^h,w^h)  \nonumber \\
  &\geq \, \onehalf\norm{ \sqrt{|\B{a}\!\cdot\! \B{n}|}w^h}^2_{\partial \Omega} +  \onehalf \kappa \, \norm{\nabla w^h}_{\Omega}^2 + (\beta-2T_1)\kappa \norm{w^h}^2_{\partial \Omega_D}  + \onehalf \tau_{\text{eff}} \, \norm{\B{a}\cdot\nabla w^h}_{\Omega}^2  \nonumber\\
  &\qquad - \onequart \kappa \, \norm{\nabla w^h}_{\Omega}^2  -\onequart \tau_{\text{eff}} \norm{\B{a}\cdot \!\nabla w^h}^2_\Omega -3\,c_s^2 T_2\kappa\norm{ w^h}^2_{\partial \Omega_D^+} \\
  &= \, \onequart \tau_{\text{eff}} \norm{\B{a}\cdot \!\nabla w^h}^2_{ \Omega} +\onehalf \norm{ \sqrt{|\B{a}\!\cdot\! \B{n}|} w^h}^2_{\partial \Omega}  +\onequart \kappa \norm{\nabla w^h}^2_{\Omega} \nonumber \\
  &\qquad + (\beta-2T_1-3\,c_s^2 T_2)\kappa \norm{w^h}^2_{\partial \Omega_D}   . \label{eq:thm1init}
\end{align}   
Using the assumption on $\beta$ from \cref{eq:betaCondition} completes the proof.
\end{proof}

\textbf{Remark 5:} Note that both Nitsche's method (\textit{Lemma} \ref{lem:Dif}) and the VMS method (\textit{Lemma} \ref{lem:VMS1}) rely on the first term in \cref{lem3subs} for their stability. As a result, the combined use of VMS and weakly enforced boundary conditions requires a larger $\beta$/$T_1$ ratio compared to the typical choice of $\beta = 2T_1$ for the standard Nitsche's method \cite{Embar2010}. 

\section{Numerical verification for a one-dimensional model problem}
\label{sec:numexpP1}

We present a number of numerical experiments to verify the derivation from \cref{ssec:InverseFSP,ssec:H10model,ssec:OneDModel}, and to investigate the accuracy improvement that may be achieved by using the new residual-based fine-scale model of sections \ref{ssec:Generalization} and \ref{ssec:summary}.

\subsection{Linear basis functions}
\label{ssec:num1D}


\Cref{fig:1DLin} shows the result for a simulation with $\B{a} = 0.8$, $\kappa = 0.02$ and $\beta=2/h$ on the domain $\Omega=[\,0,0.3\,]$, discretized with three linear elements. The solid green line shows the exact solution $\phi$. With the current discretization, the boundary layer falls completely within a single element. We obtain the exact coarse-scale solution, indicated with the black line, by projecting the exact solution onto the finite element mesh with the Nitsche projector from \Cref{ssec:NitscheVMS}. The blue dotted line is obtained by only using the classical VMS term, equivalent to $\gamma=0$, whereas the red dashed solution incorporates the exact augmented VMS model from \cref{HOexact} with the exact parameter definitions from \cref{taugamdef}. Finally, the line with the circular markers shows the result when the generalized model of sections \ref{ssec:Generalization} and \ref{ssec:summary}.
\begin{figure}[!b]
    \centering
    \subfloat[Coarse-scale solutions.]{\includegraphics[width=0.49\linewidth,trim={0.3cm 0.5cm 0.3cm 0.3cm},clip]{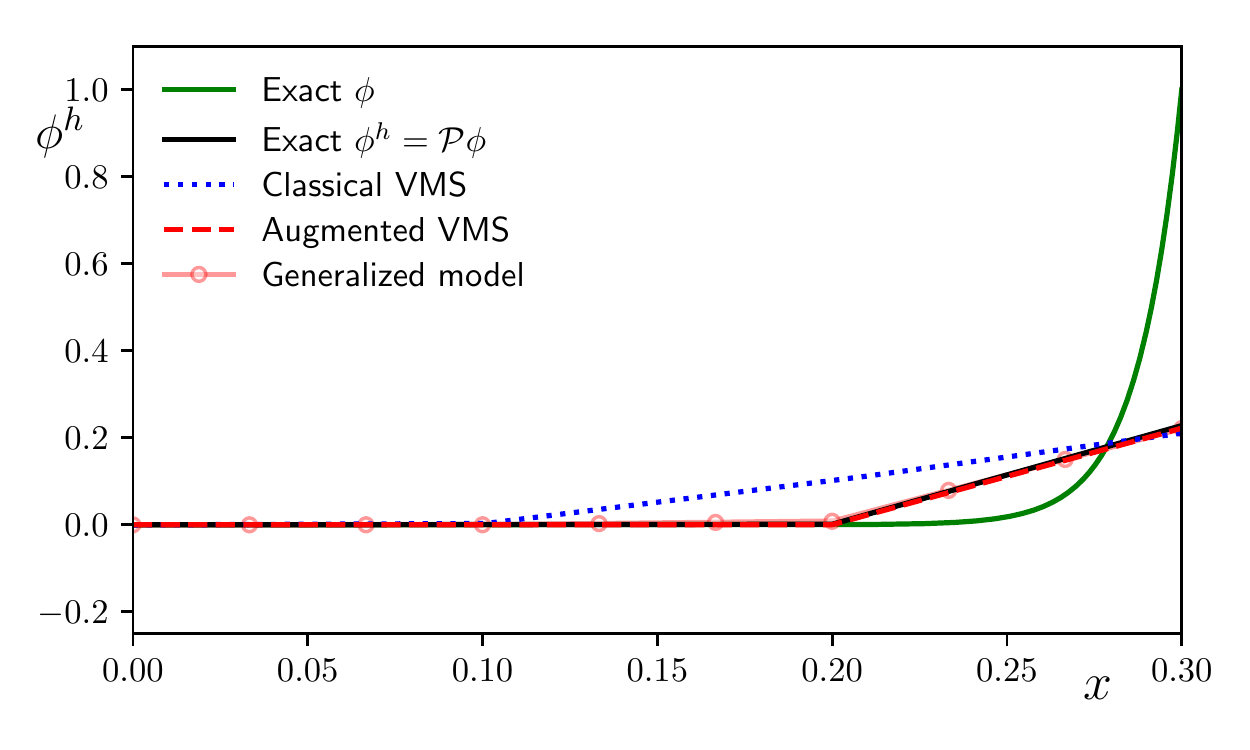} \label{fig:1DLinSol}}
    \subfloat[Fine-scale solutions (errors).]{\includegraphics[width=0.49\linewidth,trim={0.3cm 0.5cm 0.3cm 0.3cm},clip]{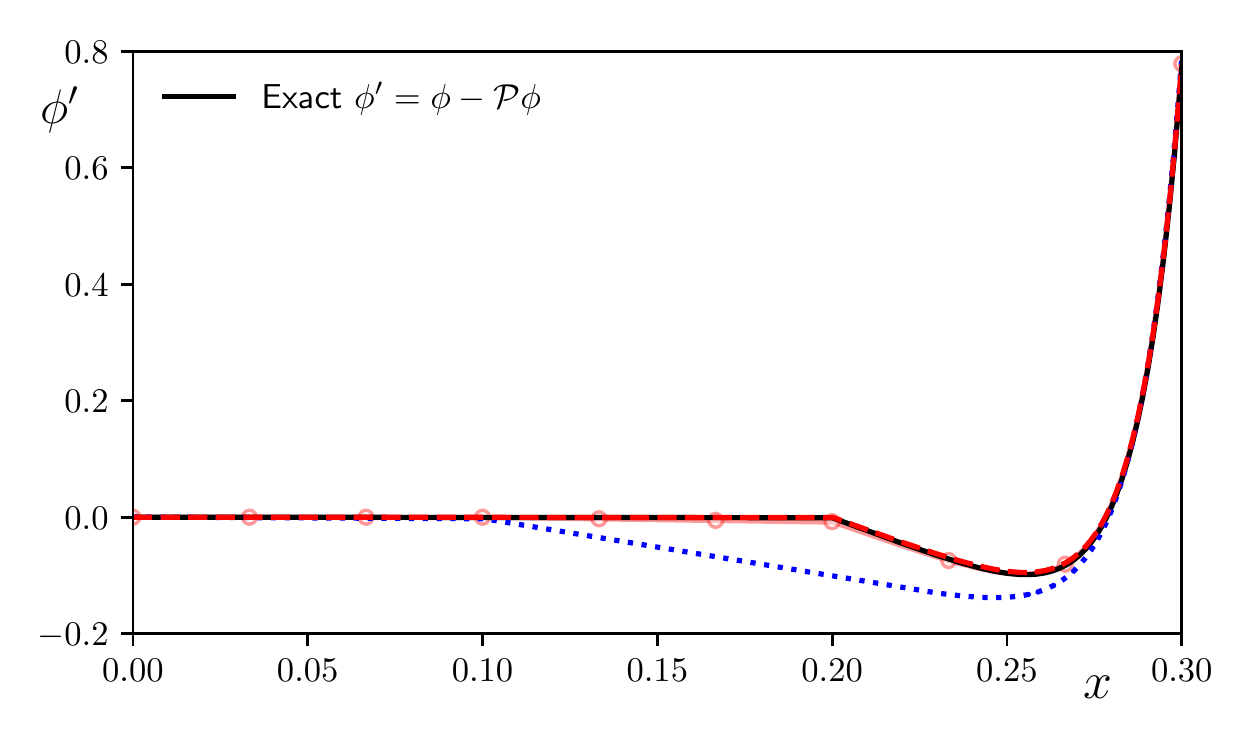} \label{fig:1DLinErr}}
    \caption{One-dimensional results using three linear elements.}
    \label{fig:1DLin}
\end{figure}

It is a celebrated fact that for this model problem the VMS method with strongly enforced boundary conditions results in nodally exact solutions \cite{Hughes1998}. The results of \Cref{fig:1DLinSol} show that this property is lost when the boundary conditions are enforced weakly, and that the under-resolved boundary layer affects the approximation on a large part of the domain. \Cref{fig:1DLinErr} shows that the magnitude of the fine-scale solution on the outflow boundary of the domain is considerable. 
Hence, the assumption of vanishing fine scales, which is critical in the derivation of the classical VMS model, is severely violated. 
In contrast, the augmented VMS model is exactly the Nitsche projection of the exact solution. As a result, the nodal exactness of the computational solution is retrieved and the adverse effect of the boundary layer is constrained to a single element. Due to the relative simplicity of this problem, solution obtained with the generalized model 
is only affected by the estimation of the model parameters. As it is nearly identical to the exact coarse-scale solution we conclude that, at least for this simple case, the estimation strategy is effective.

\textbf{Remark 6:} It is well known that for the current case the classical VMS term simplifies to a (consistent) diffusion term. Interestingly, in a similar sense the augmented term in the VMS formulation simplifies to a reduced diffusion in the symmetric part of Nitsche's formulation. In this context, the solution obtained by using the classical VMS model may be interpreted as excessively diffusive in the boundary layer, which is (consistently) counteracted by the augmented VMS term.

\subsection{Higher-order basis functions}


We use the same problem formulation but discretize with three higher-order elements. With quadratic basis functions and $\beta=3/h$ we obtain the solutions from \Cref{fig:1DP2Sol,fig:1DP2Err}, and with cubic basis function and $\beta=6/h$ we obtain the solutions from \Cref{fig:1DP3Sol,fig:1DP3Err}. We can largely draw the same conclusions as for the linear basis functions: the fine-scale solution 
deviates significantly from zero at the outflow boundary. 
As a result, the solution quality of the classical VMS model is spoiled. By using the augmented VMS model we obtain the Nitsche projection of the exact solution. This gives a nodally exact solution, where the adverse effect of the boundary layer is contained within the boundary element. These points are all compliant with the theory of \cref{ssec:NitscheVMS}.

\begin{figure}[b!]
    \centering
    \subfloat[Coarse-scale solutions, $P=2$.]{\includegraphics[width=0.49\linewidth,trim={0.3cm 0.5cm 0.3cm 0.3cm},clip]{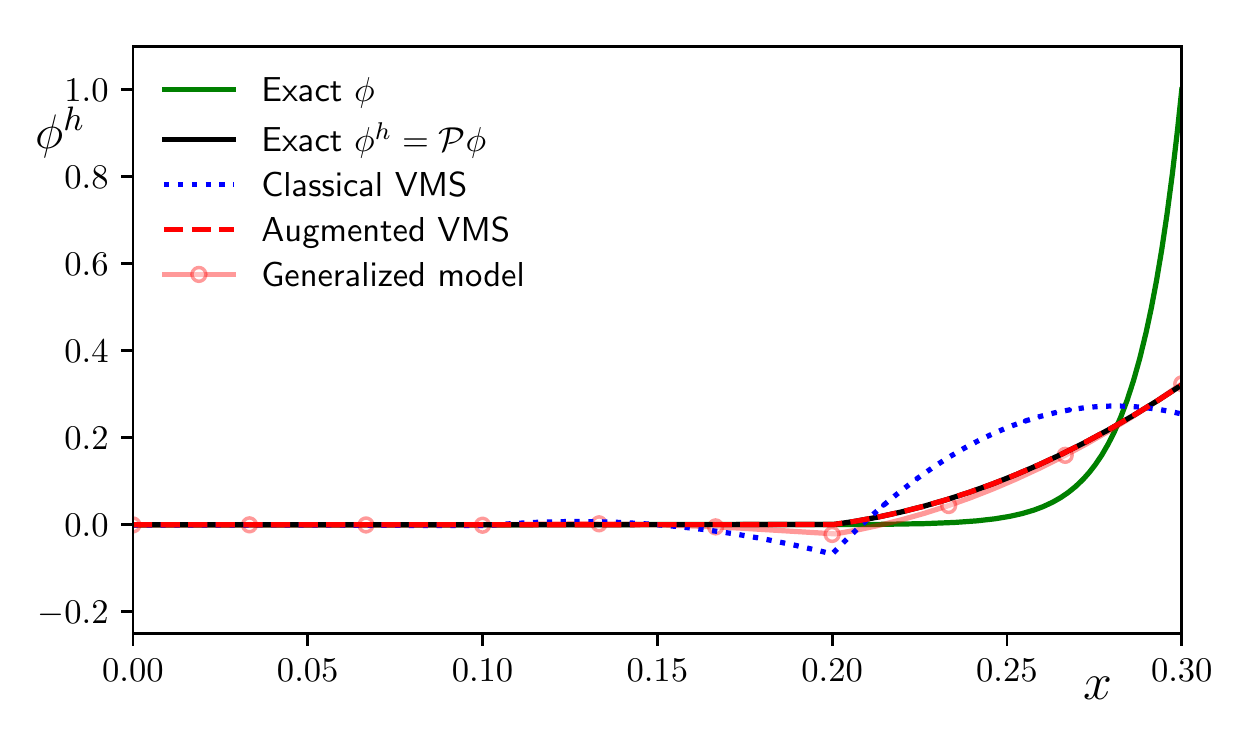} \label{fig:1DP2Sol}}
    \subfloat[Fine-scale solutions (errors), $P=2$.]{\includegraphics[width=0.49\linewidth,trim={0.3cm 0.5cm 0.3cm 0.3cm},clip]{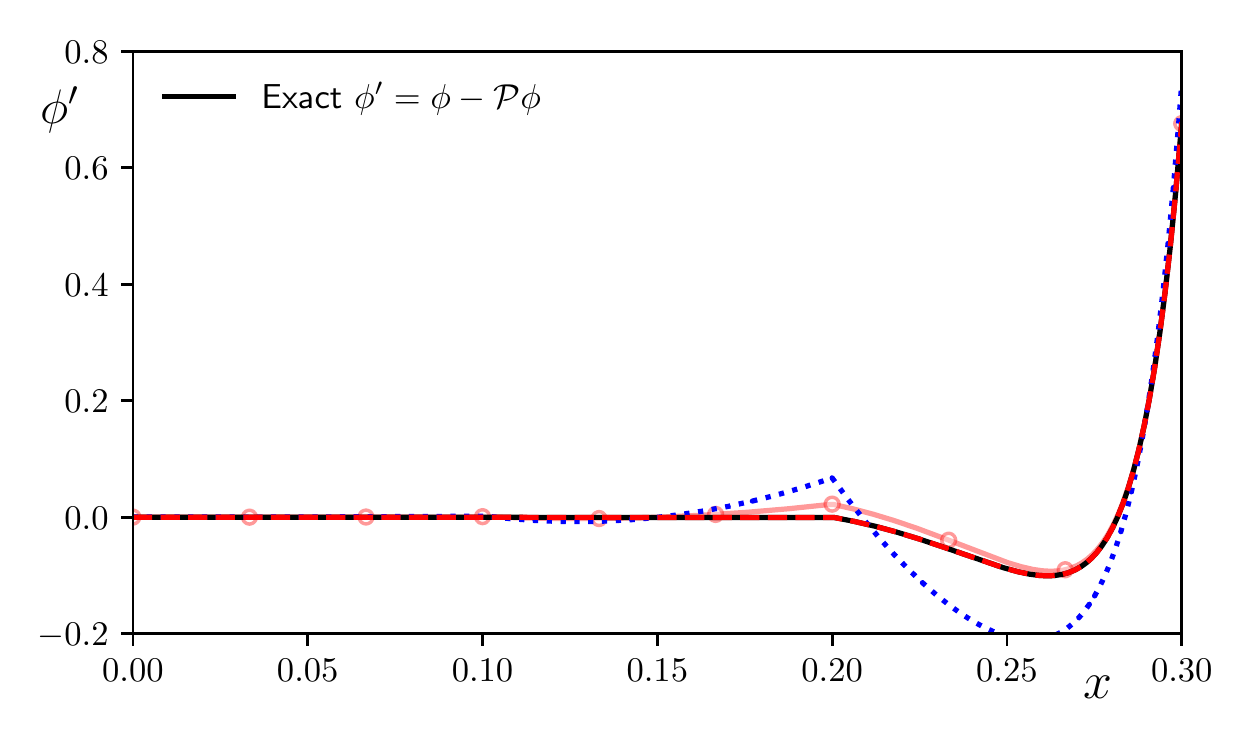} \label{fig:1DP2Err}}\\
    \subfloat[Coarse-scale solutions, $P=3$.]{\includegraphics[width=0.49\linewidth,trim={0.3cm 0.5cm 0.3cm 0.3cm},clip]{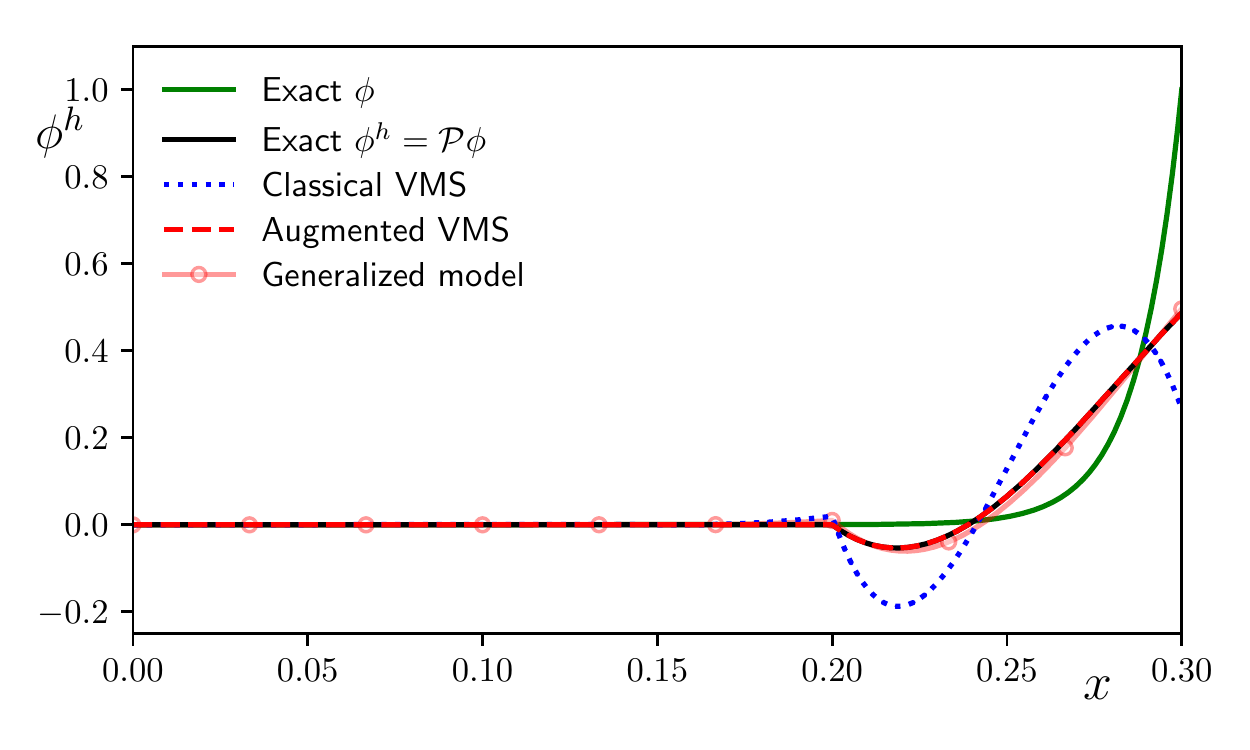} \label{fig:1DP3Sol}}
    \subfloat[Fine-scale solutions (errors), $P=3$.]{\includegraphics[width=0.49\linewidth,trim={0.3cm 0.5cm 0.3cm 0.3cm},clip]{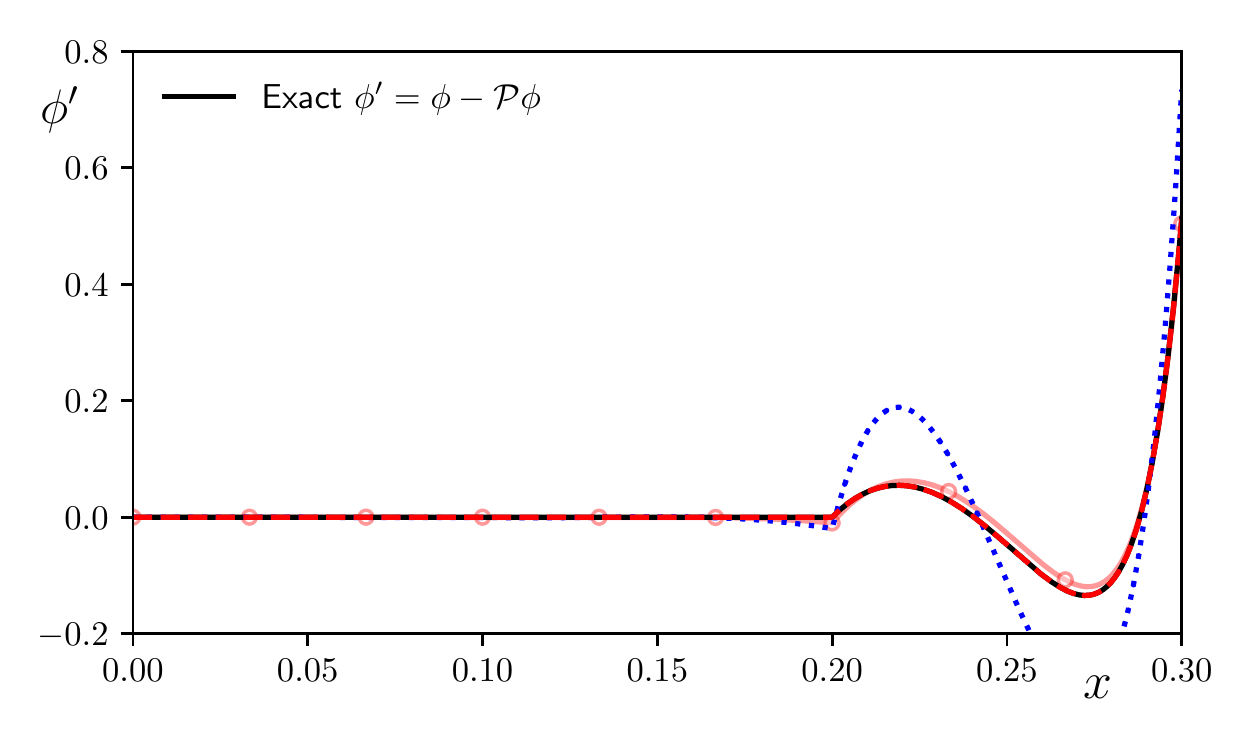} \label{fig:1DP3Err}}
    \caption{One-dimensional results using three quadratic or cubic elements.}
    \label{fig:1DHO}
\end{figure}

Additionally, we observe that the solution for the approximate augmented VMS model is very close to the exact coarse-scale solution. For the linear basis functions we concluded that the estimations of the model parameters are effective. We can now also conclude that the approximation of the differential operator described \cref{ssec:OpEst} is effective, at least for this one-dimensional case.\\

\section{Numerical experiments for a two-dimensional model problem}
\label{sec:numexpHO}

Next, we present numerical experiments for a two-dimensional domain. All the model approximations become important, and their effectiveness can be assessed. 

\subsection{Linear basis functions, high and low advective dominance}


We consider a model problem of a unit square with a circular hole of radius 0.24 in the center. Dirichlet conditions are enforced on all boundaries; $\phi_D = 0$ around the circular cut-out, and $\phi_D=x+y$ around the square. The advective field acts across the diagonal with a magnitude of $0.8$, the diffusivity is $\kappa=0.01$ or $\kappa=0.003$, and we use $\beta=10/h$. \Cref{fig:2Dmodel} schematically illustrates the model problem, and \Cref{fig:2Dexact} shows the solution for $\kappa=0.01$ obtained with a highly refined mesh. The solution features multiple boundary layers at various orientations.

The performance of the models can most clearly be assessed by investigating the resulting fine-scale solutions. These are shown in \Cref{fig:2DErrorLinLow,fig:2DErrorLinHigh} for $\kappa=0.01$ and $\kappa=0.003$ respectively. In \Cref{fig:2DErrorLinLowa,fig:2DErrorLinHigha} classical VMS stabilization is used (i.e. $\gamma=0$), and \Cref{fig:2DErrorLinLowb,fig:2DErrorLinHighb} show the results for the augmented model. Additionally, we project the overrefined solution onto the coarse-scale function space using the Nitsche projector, and show the resulting fine-scale solution in \Cref{fig:2DErrorLinLowc,fig:2DErrorLinHighc}. This represents the `exact' fine-scale solution.\\

\begin{figure}[H]
    \centering
    \subfloat[Problem specification.]{\includegraphics[width=0.3\linewidth]{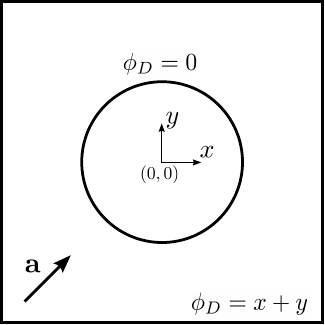}\label{fig:2Dmodel}} \hspace{1.3cm}
    \subfloat[Solution for $|\B{a}| =0.8$ and $\kappa=0.01$.]{\includegraphics[width=0.47\linewidth]{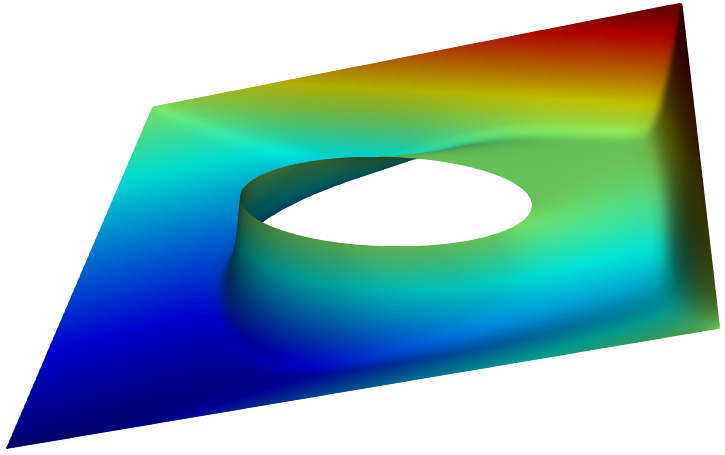} \label{fig:2Dexact}}
    \caption{Two-dimensional model problem for linear basis functions.}
    \label{fig:2Dproblem}
\end{figure}

\begin{figure}[H]
    \centering
    \vspace{0.5cm}
    \subfloat[Classical VMS model.] {\includegraphics[width=0.31\linewidth]{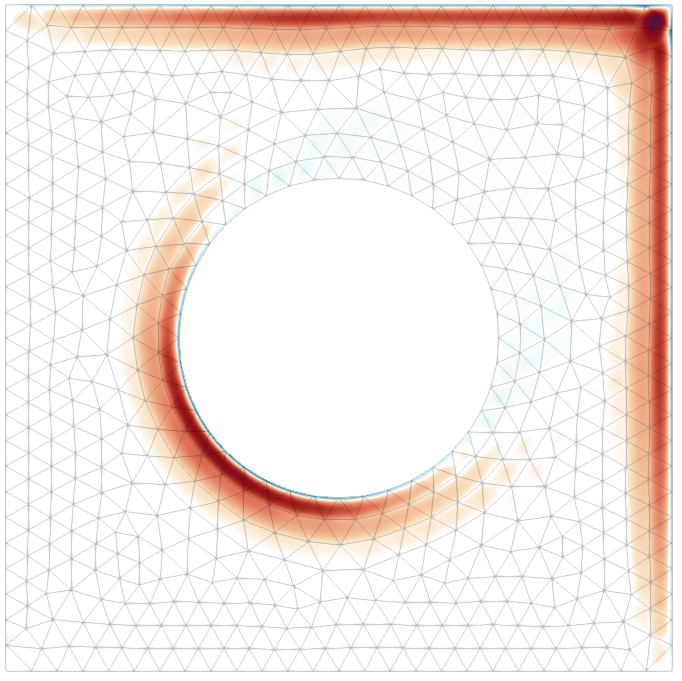}\label{fig:2DErrorLinLowa}} \hspace{0.2cm}
    \subfloat[Augmented VMS model.] {\includegraphics[width=0.31\linewidth]{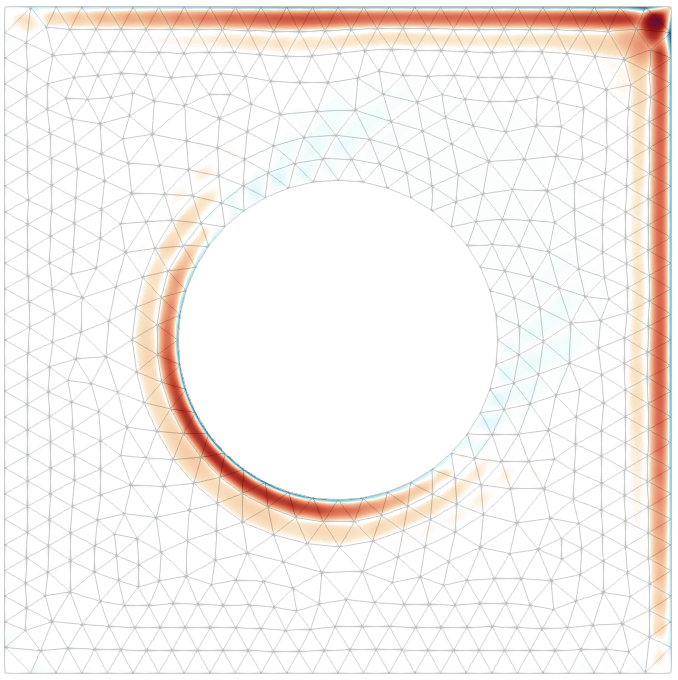} \label{fig:2DErrorLinLowb}}\hspace{0.2cm}
    \subfloat[Projected solution.] {\includegraphics[width=0.31\linewidth]{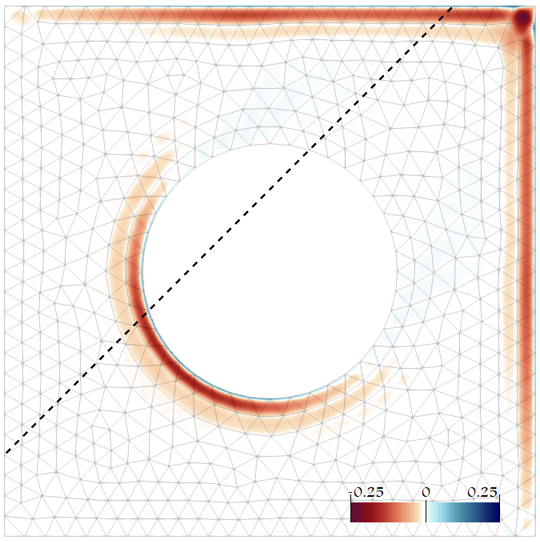} \label{fig:2DErrorLinLowc}}
    \caption{Fine-scale solutions $\phi-\phi^h$ (errors) for linear basis functions and $|\B{a}| =0.8$ and $\kappa=0.01$.}
    \label{fig:2DErrorLinLow}
\end{figure}
\begin{figure}[H]
    \centering
    \vspace{0.5cm}
    \subfloat[Classical VMS model.] {\includegraphics[width=0.31\linewidth]{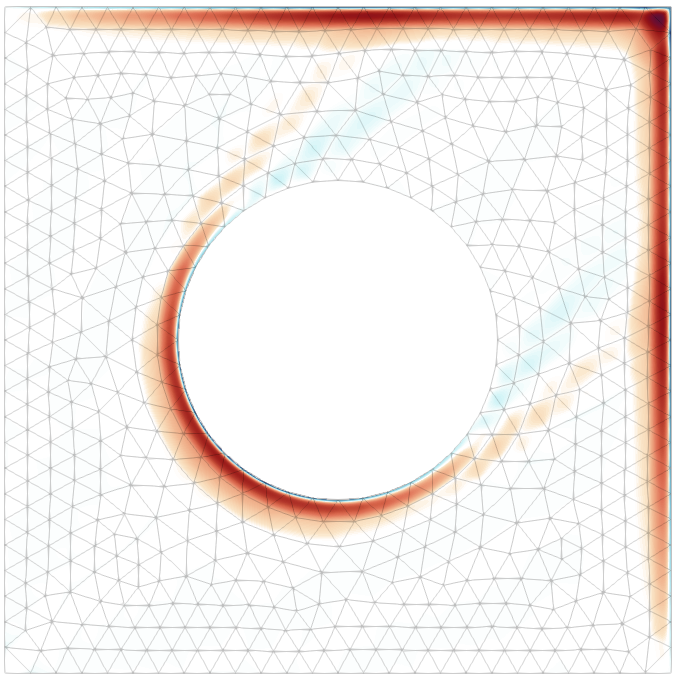}\label{fig:2DErrorLinHigha}} \hspace{0.2cm}
    \subfloat[Augmented VMS model.] {\includegraphics[width=0.31\linewidth]{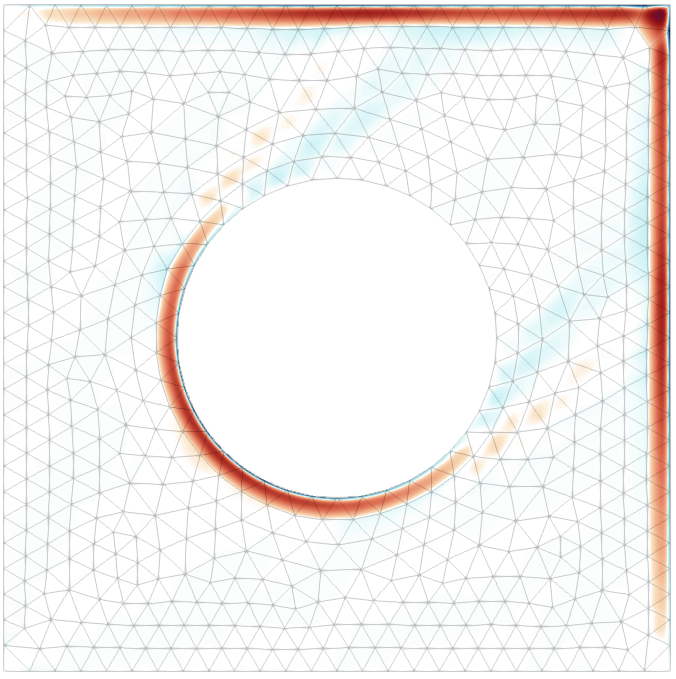} \label{fig:2DErrorLinHighb}}\hspace{0.2cm}
    \subfloat[Projected solution.] {\includegraphics[width=0.31\linewidth]{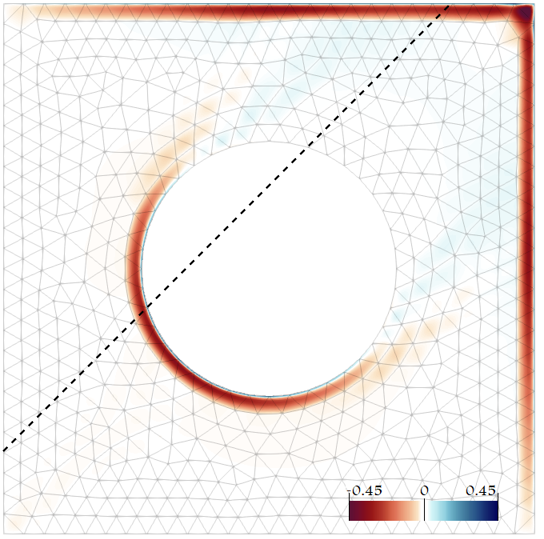} \label{fig:2DErrorLinHighc}}
    \caption{Fine-scale solutions $\phi-\phi^h$ (errors) for linear basis functions and $|\B{a}| =0.8$ and $\kappa=0.003$.}
    \label{fig:2DErrorLinHigh}
\end{figure}
\begin{figure}[H]
    \centering
    \subfloat[For $|\B{a}| =0.8$ and $\kappa=0.01$.]{\includegraphics[width=0.49\linewidth,trim={0.3cm 0.5cm 0.3cm 0.3cm},clip]{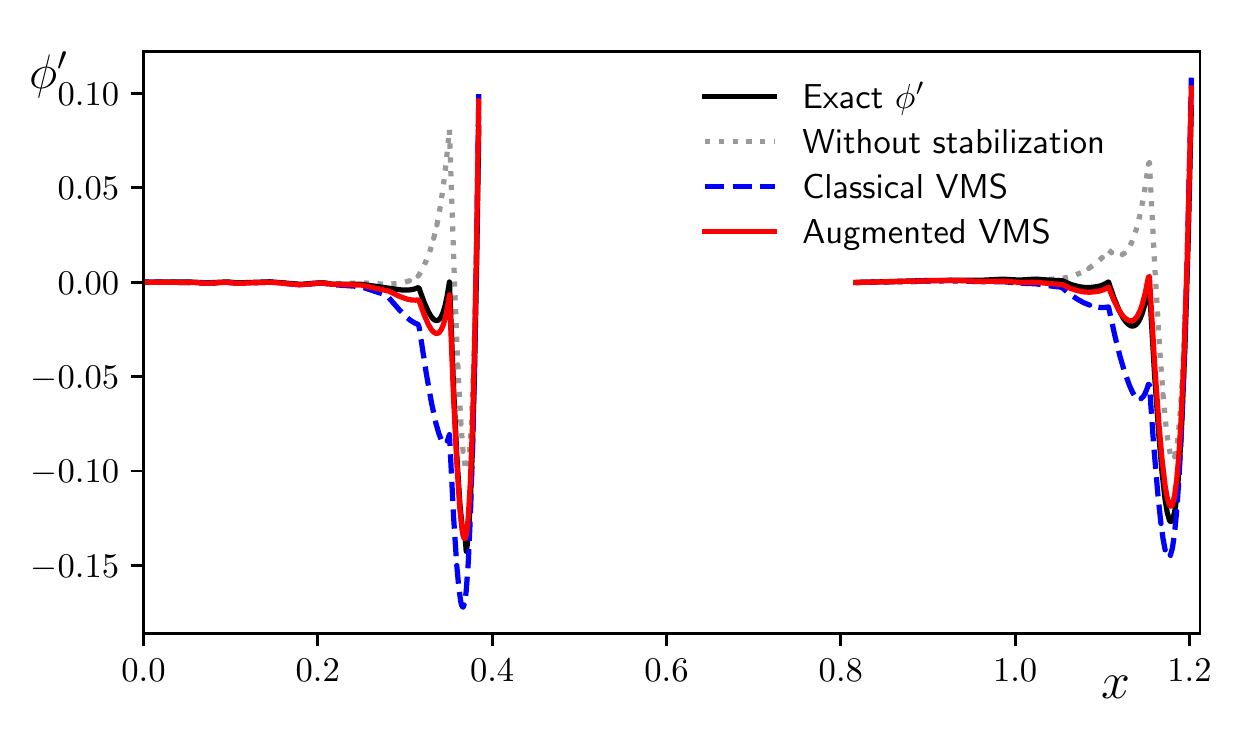} \label{fig:2DLinErrLow}}
    \subfloat[For $|\B{a}| =0.8$ and $\kappa=0.003$.]{\includegraphics[width=0.49\linewidth,trim={0.3cm 0.5cm 0.3cm 0.3cm},clip]{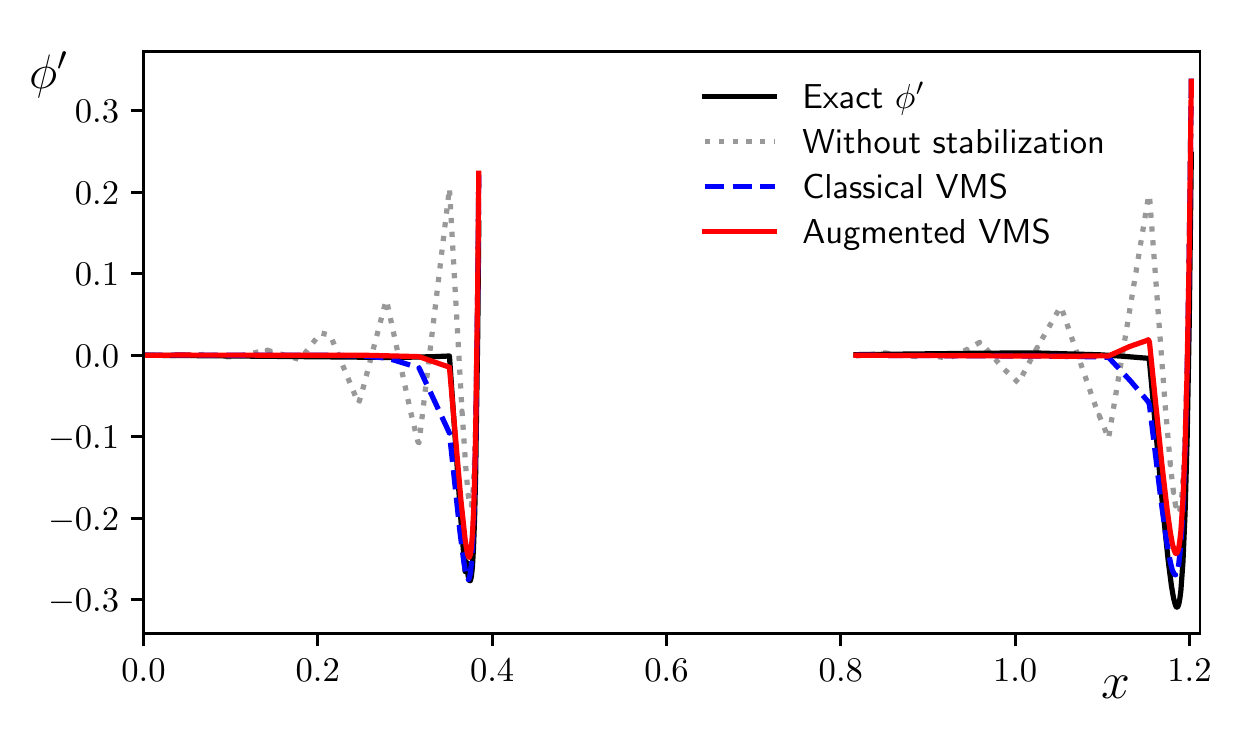} \label{fig:2DLinErrHigh}}
    \caption{Fine-scale solutions on the cut-planes from \cref{fig:2DErrorLinLowc,fig:2DErrorLinHighc}.}
    \label{fig:2DErr}
\end{figure}

The observations made for the one-dimensional case almost directly transfer to this two-dimensional problem. The Nitsche projector aims to constrain the impact of the high gradients to the boundary layer elements, without spoiling the results further into the domain. This is illustrated by large fine-scale solutions in only a single row of elements adjacent to the outflow boundary. When the classical VMS model is used, we observe a significant thickening of the range of nonzero fine scales; interpretable as excessive diffusion in the coarse-scale solution. When we add the additional modeling term this thickening is decreased, which leads to nearly the same solution quality as that obtained with the Nitsche projection. We observe these effects irrespective of the P\'eclet number. 

To further illustrate the significance of the change, we show all three solutions on a cut-plane in \Cref{fig:2DErr}. Note, in particular, the similarity of \Cref{fig:2DLinErrHigh} and the corresponding figure for the one-dimensional case (\Cref{fig:1DLinErr}). The fine-scale solution corresponding to a completely non-stabilized computation is also plotted to put the overall improvement of the solution quality into context.

Convergence in the $L^2$ or $H^1$ (semi)norms are not indicative of solution quality for the current case; $L^2$ projections of shocks lead to highly oscillatory solutions such that the non-stabilized solution often achieves the lowest $L^2$ error, and neither the classical nor the augmented VMS model aims to achieve optimality in the $H^1$ seminorm as the boundary conditions are not enforced strongly. Rather, the use of weakly enforced boundary conditions implies the optimality condition of \cref{optimalitycond}, satisfied by solutions that minimize \cref{NitscheMinimization}. 
The error of interest is thus the one with respect to the `optimal' solution; the exact coarse-scale solution obtained with the Nitsche projector. This error indicates the performance of the fine-scale model. This error is plotted for different mesh densities in \Cref{fig:2DL2P1low,fig:2DL2P1high}. 
\Cref{fig:2DErrorLinLow,fig:2DErrorLinHigh,fig:2DErr} correspond to the third data point in these convergence graphs. Both graphs show a considerable reduction in error when the augmented VMS model is used, which persists throughout mesh refinement. We observe that the ($L^2$) error reduction from classical to augmented VMS is often of the same magnitude, if not larger, than from non-stabilized to classical VMS. 

\begin{figure}[!htb]
    \centering
    \subfloat[For $|\B{a}| =0.8$ and $\kappa=0.01$.]{\includegraphics[width=0.4\linewidth,trim={0.4cm 0.0cm 0.4cm 0.4cm},clip]{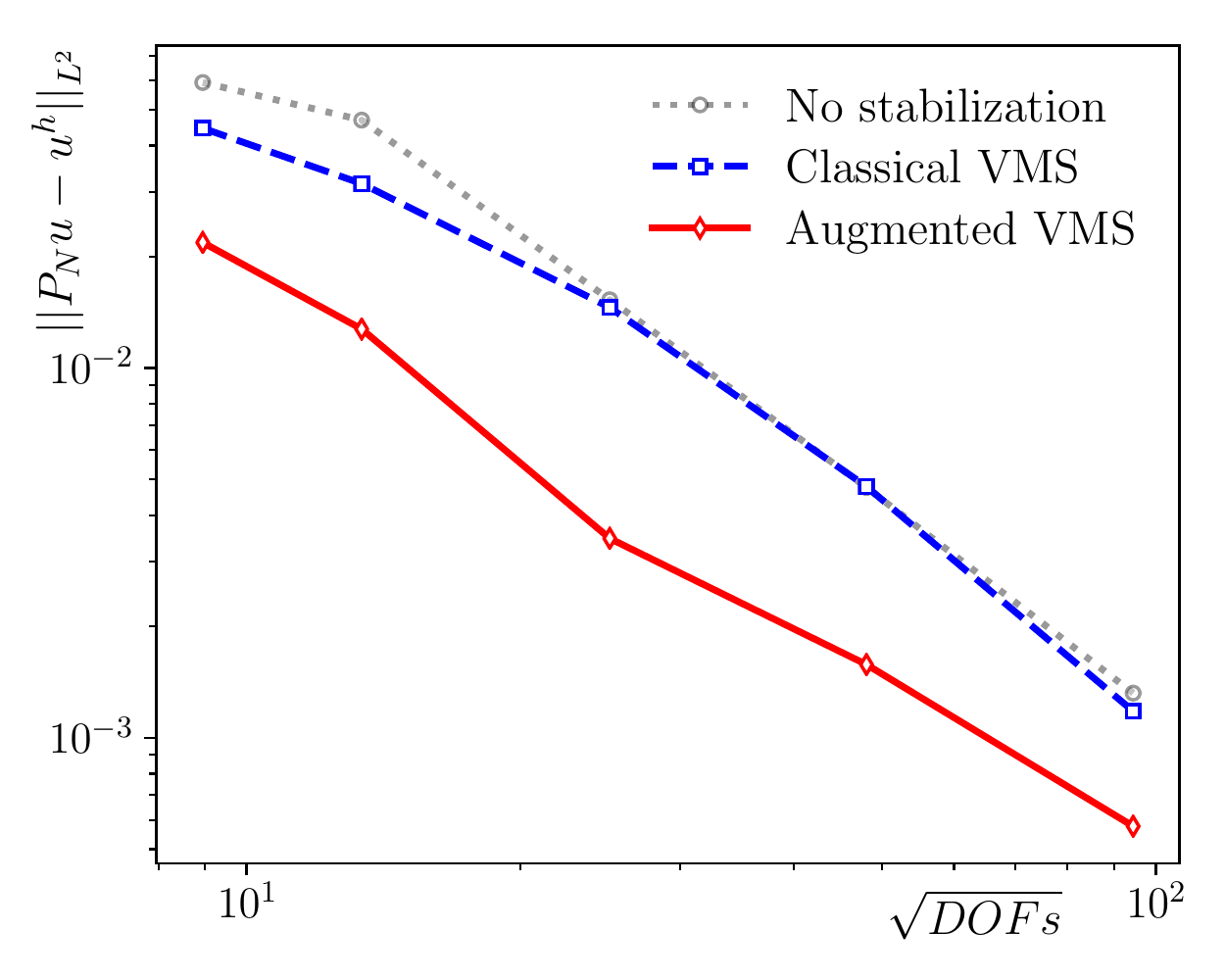}\label{fig:2DL2P1low}} \hspace{0.3cm}
    \subfloat[For $|\B{a}| =0.8$ and $\kappa=0.003$.] {\includegraphics[width=0.4\linewidth,trim={0.4cm 0.0cm 0.4cm 0.4cm},clip]{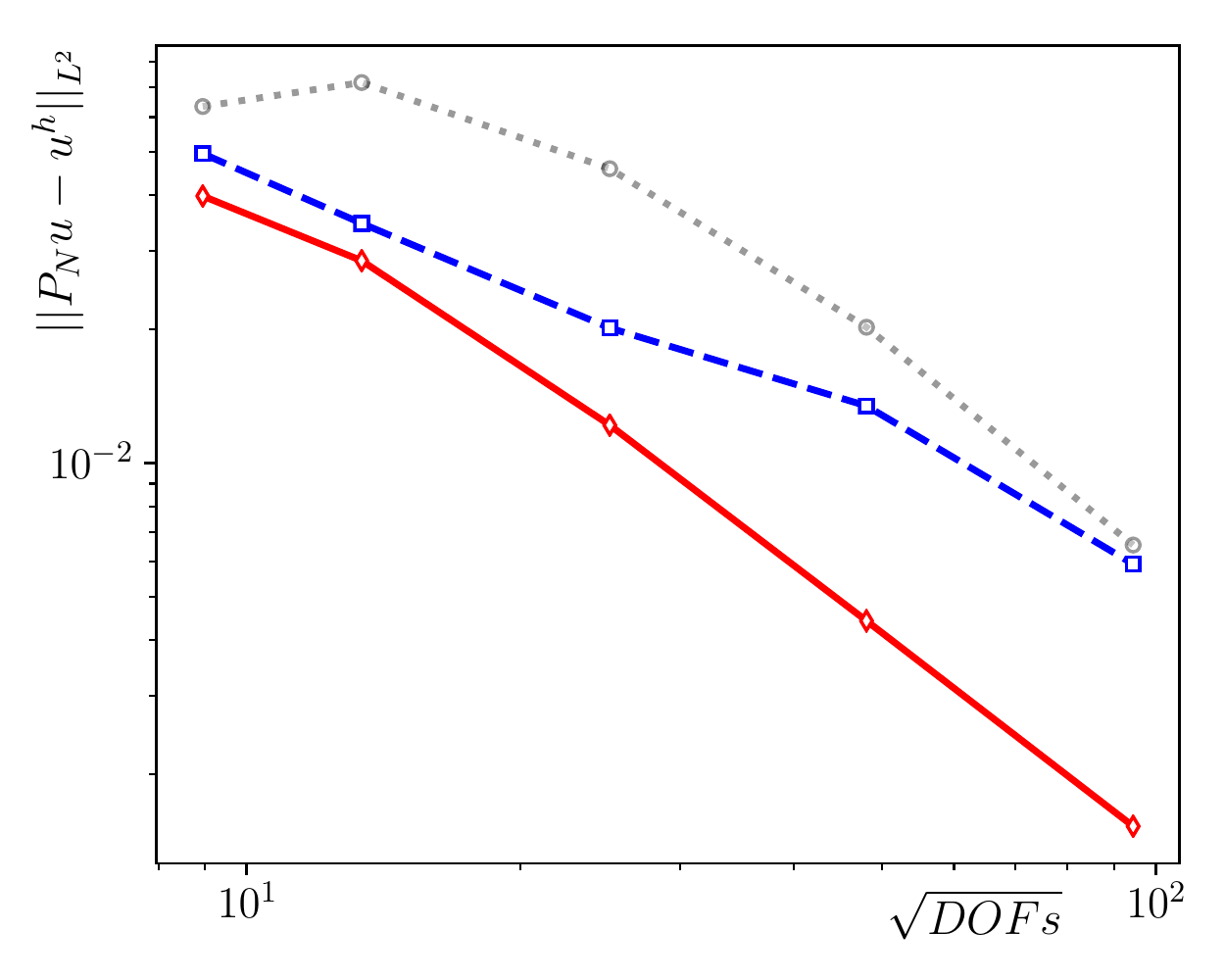} \label{fig:2DL2P1high}}
    \caption{Error with respect to the exact coarse-scale solution, using linear basis functions.}
    \label{fig:2DConvLin}
\end{figure}

\textbf{Remark 7:} Analogous to the one-dimensional case, the augmented VMS term decreases the diffusivity in the symmetry part of the Nitsche formulation. Different from the one-dimensional case, this becomes vector-valued and the formulation becomes a streamline directed modified diffusion on the boundary. One could interpret this as a boundary equivalent of the streamline diffusion that the classical VMS terms revert to for the same case.\\ 

\subsection{Higher-order basis functions}

Next, we change the geometry to a square with a polygonal exclusion, as depicted in \Cref{fig:2DproblemHO}. An exact geometry representation can be achieved, which, for these higher-order basis functions, is important for accurately computing the boundary integrals for the Nitsche projection $\mathscr{P}_N\phi$. We focus on the advection dominated case of $|\B{a}| =0.8$ and $\kappa=0.003$, and we use $\beta = 4P^2/h$.

\Cref{fig:2DErrorP2a,fig:2DErrorP2b,fig:2DErrorP2c} show the fine-scale solutions for quadratic basis functions obtained with the classical VMS model, the augmented VMS model and the Nitsche projector respectively. 
We observe that the classical VMS model with the parameter estimation developed in \cref{ssec:OpEst,ssec:TauEst} already performs remarkably well.
The boundary layers are almost exclusively contained in a single row of elements.

When we add the augmented term in the VMS model, the obtained error field qualitatively more closely resembles the true fine-scale solution shown in \Cref{fig:2DErrorP2c}. We do, however, also observe some small oscillations. This is consistent with the decreased diffusion interpretation proposed in \textit{Remarks }6 and 7. 
A more detailed analysis of the resulting error confirms that the solution obtained with the augmented VMS model more closely resembles the true coarse-scale solution defined by the Nitsche projector. This is shown in \Cref{fig:2DP2Err}, where the resulting fine-scale solution is plotted along a cut-plane, as well as in \Cref{fig:2DL2P2}, which plots the $L^2$-error with respect to the true coarse-scale solution for various mesh densities.\\ 


\begin{figure}[H]
    \centering
    \subfloat[Problem specification.]{\includegraphics[width=0.3\linewidth]{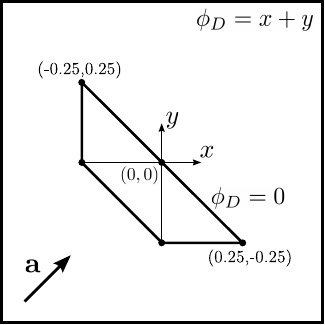}\label{fig:2DmodelHO}} \hspace{1.3cm}
    \subfloat[Solution for $|\B{a}| =0.8$ and $\kappa=0.003$.]{\includegraphics[width=0.47\linewidth]{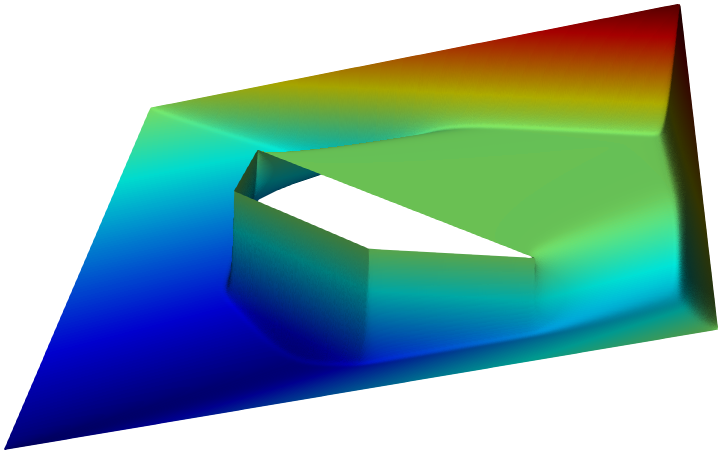} \label{fig:2DexactHO}}
    \caption{Two-dimensional model problem for higher-order basis functions.}
    \label{fig:2DproblemHO}
\end{figure}

\begin{figure}[H]
\vspace{-0.5cm}
    \centering
    \subfloat[Classical VMS model.] {\includegraphics[width=0.31\linewidth]{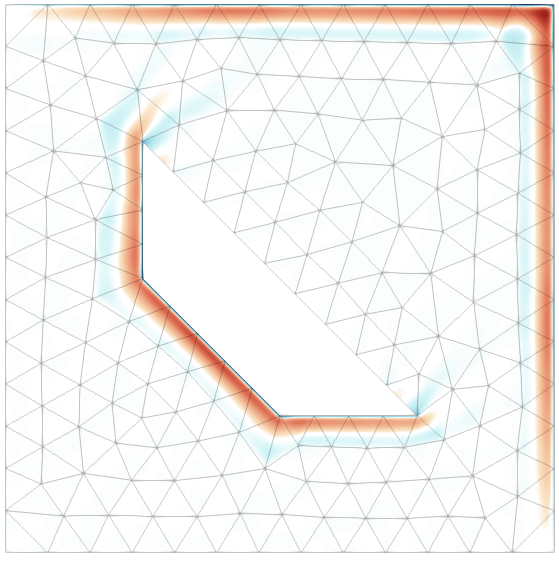}\label{fig:2DErrorP2a}} \hspace{0.2cm}
    \subfloat[Augmented VMS model.] {\includegraphics[width=0.31\linewidth]{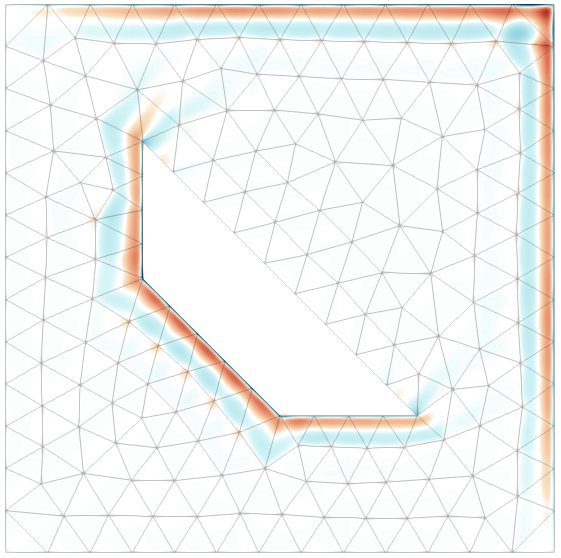} \label{fig:2DErrorP2b}}\hspace{0.2cm}
    \subfloat[Projected solution.] {\includegraphics[width=0.31\linewidth]{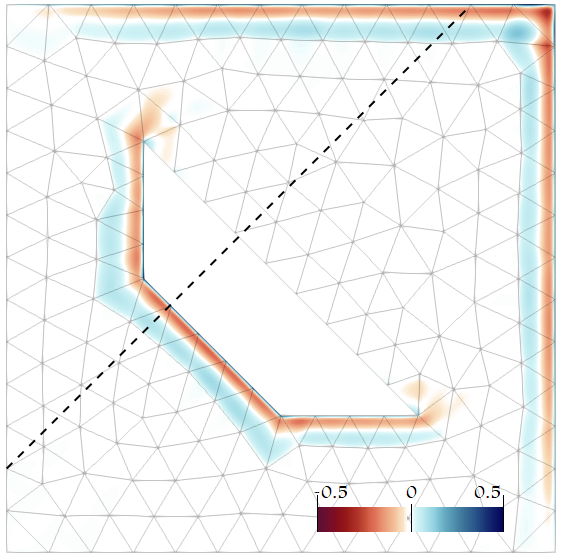} \label{fig:2DErrorP2c}}
    \caption{Fine-scale solutions $\phi-\phi^h$ (errors) for quadratic basis functions and $|\B{a}| =0.8$ and $\kappa=0.003$.}
    \label{fig:2DErrorP2}
\end{figure}

\begin{figure}[H]
    \centering
    \subfloat[Solutions on the cut-plane illustrated in \cref{fig:2DErrorP2c}.]{\includegraphics[width=0.57\linewidth,trim={0.3cm 0.35cm 0.3cm 0.3cm},clip]{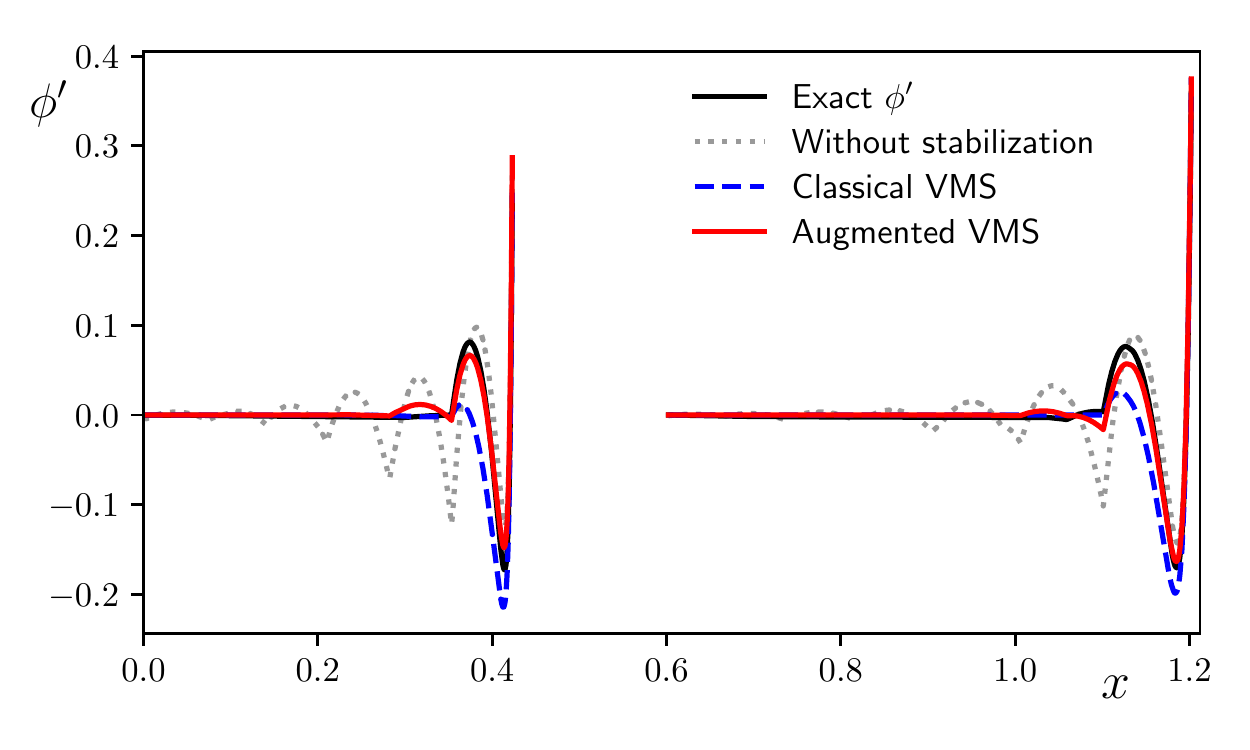} \label{fig:2DP2Err}}\hspace{0.2cm}
    \subfloat[Convergence with respect to exact coarse-scale solution.]{\includegraphics[width=0.39\linewidth,trim={0.4cm 0.0cm 0.4cm 0.4cm},clip]{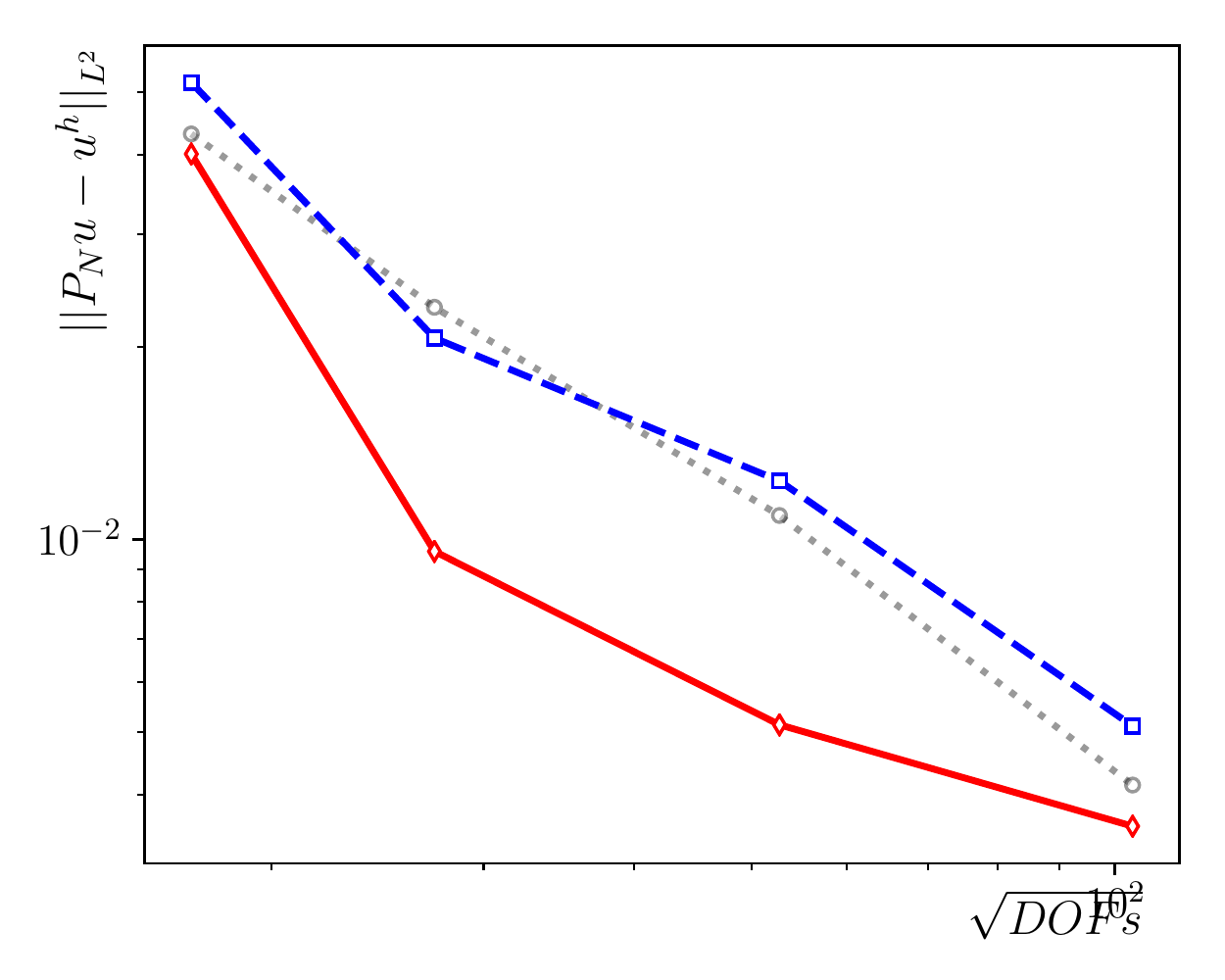}\label{fig:2DL2P2}} 
    \caption{Detailed error behavior for quadratic basis functions.}
    \label{fig:2DHOErr}
\end{figure}




Finally, if we use cubic basis functions, we obtain the results from \Cref{fig:2DErrorP3,fig:2DConvP}. Similar conclusions may be drawn as for the case of quadratic basis functions: the classical model with the parameters from \Cref{tab:cs} leads to a coarse-scale solution where the error is contained in the first row of elements. Adding the augmented term results in a solution that exhibits small oscillations, but nonetheless bears closer resemblance to the true coarse-scale solution, as measured qualitatively in \Cref{fig:2DP3Err} and quantitatively in \Cref{fig:2DL2P3}. 

It should also be noted that the Dirichlet boundary conditions are more closely satisfied with these cubic basis functions, as shown in \Cref{fig:2DP3Err}. This is, at least in part, due to the larger penalty parameter $\beta \propto P^2$. 
 The near-strong enforcement of the Dirichlet condition leaves a small fine-scale boundary value. The new term in the augmented model becomes almost inoperative, and the classical VMS model suffices. Indeed, the difference between the augmented and classical models is not as pronounced as it was in earlier simulations. 
These results convey that the augmented model provides fine-scale corrections in the pre-asymptotic regime, and vanishes (asymptotically) when such fine-scale corrections cease to be relevant.

\begin{figure}[H]
    \centering
    \subfloat[Classical VMS model.] {\includegraphics[width=0.31\linewidth]{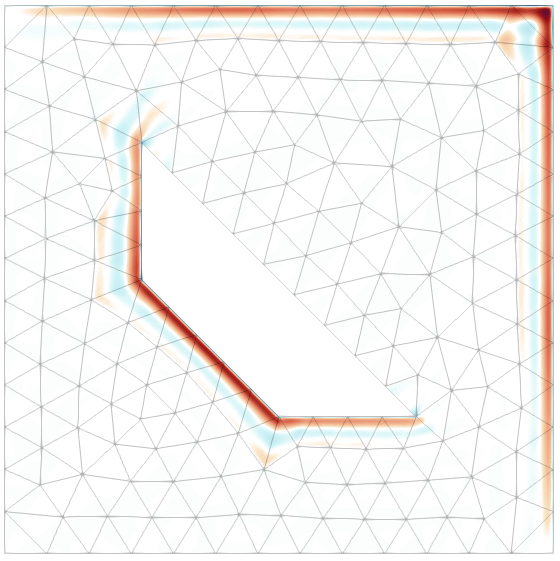}\label{fig:2DErrorP3a}} \hspace{0.2cm}
    \subfloat[Augmented VMS model.] {\includegraphics[width=0.31\linewidth]{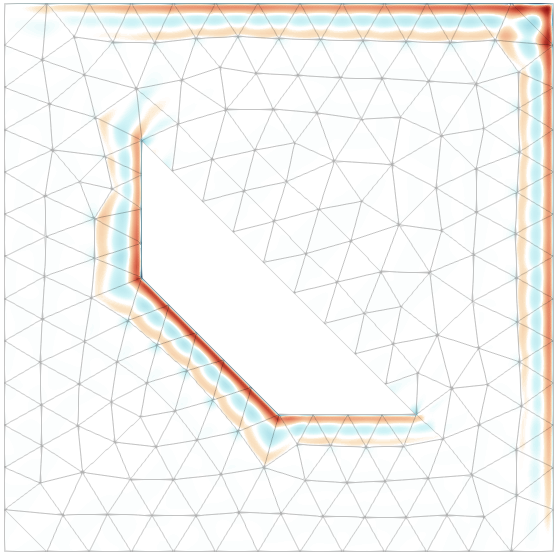} \label{fig:2DErrorP3b}}\hspace{0.2cm}
    \subfloat[Projected solution.] {\includegraphics[width=0.31\linewidth]{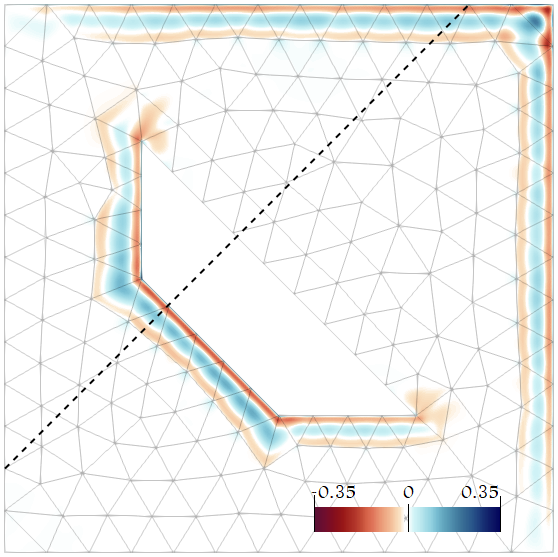} \label{fig:2DErrorP3c}}
    \caption{Fine-scale solutions $\phi-\phi^h$ (errors) for cubic basis functions and $|\B{a}| =0.8$ and $\kappa=0.003$.}
    \label{fig:2DErrorP3}
    \vspace{-0.5cm}
\end{figure}

\begin{figure}[H]
    \centering
    \subfloat[Solutions on the cut-plane illustrated in \cref{fig:2DErrorP3c}.]{\includegraphics[width=0.57\linewidth,trim={0.3cm 0.3cm 0.3cm 0.3cm},clip]{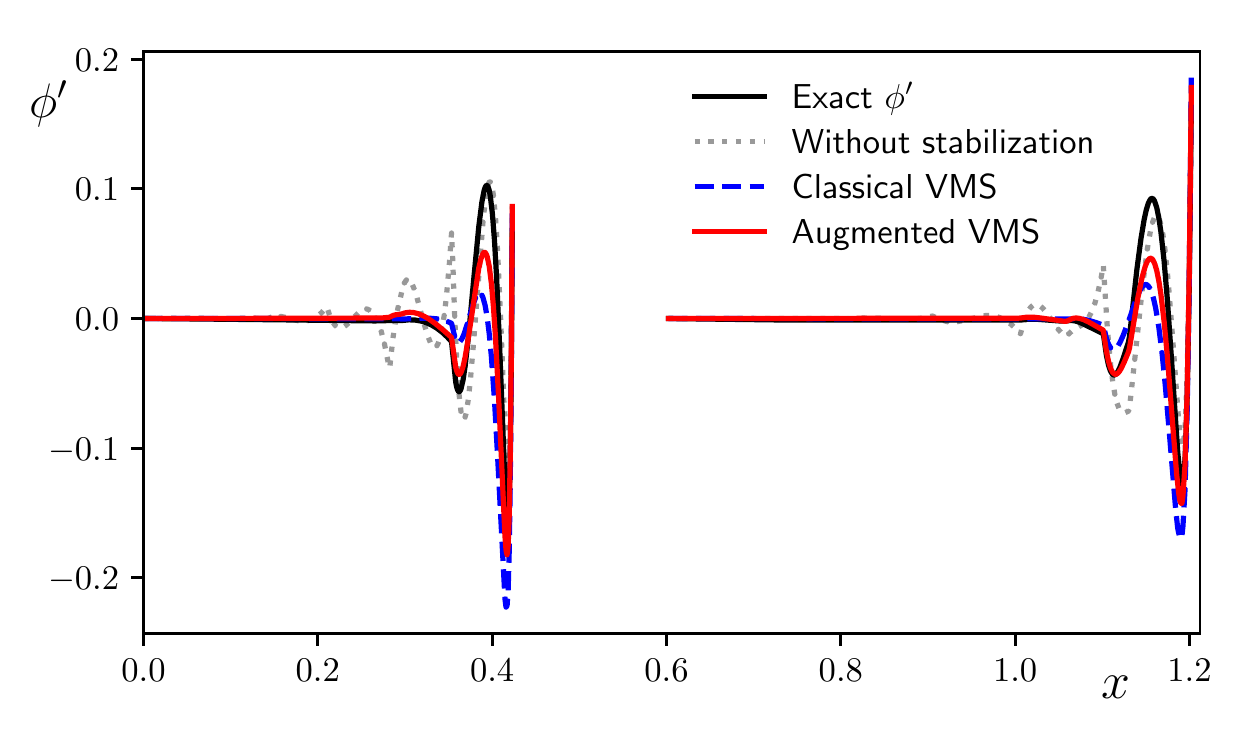} \label{fig:2DP3Err}}\hspace{0.2cm}
    \subfloat[Convergence with respect to exact coarse-scale solution.] {\includegraphics[width=0.39\linewidth,trim={0.4cm 0.0cm 0.4cm 0.4cm},clip]{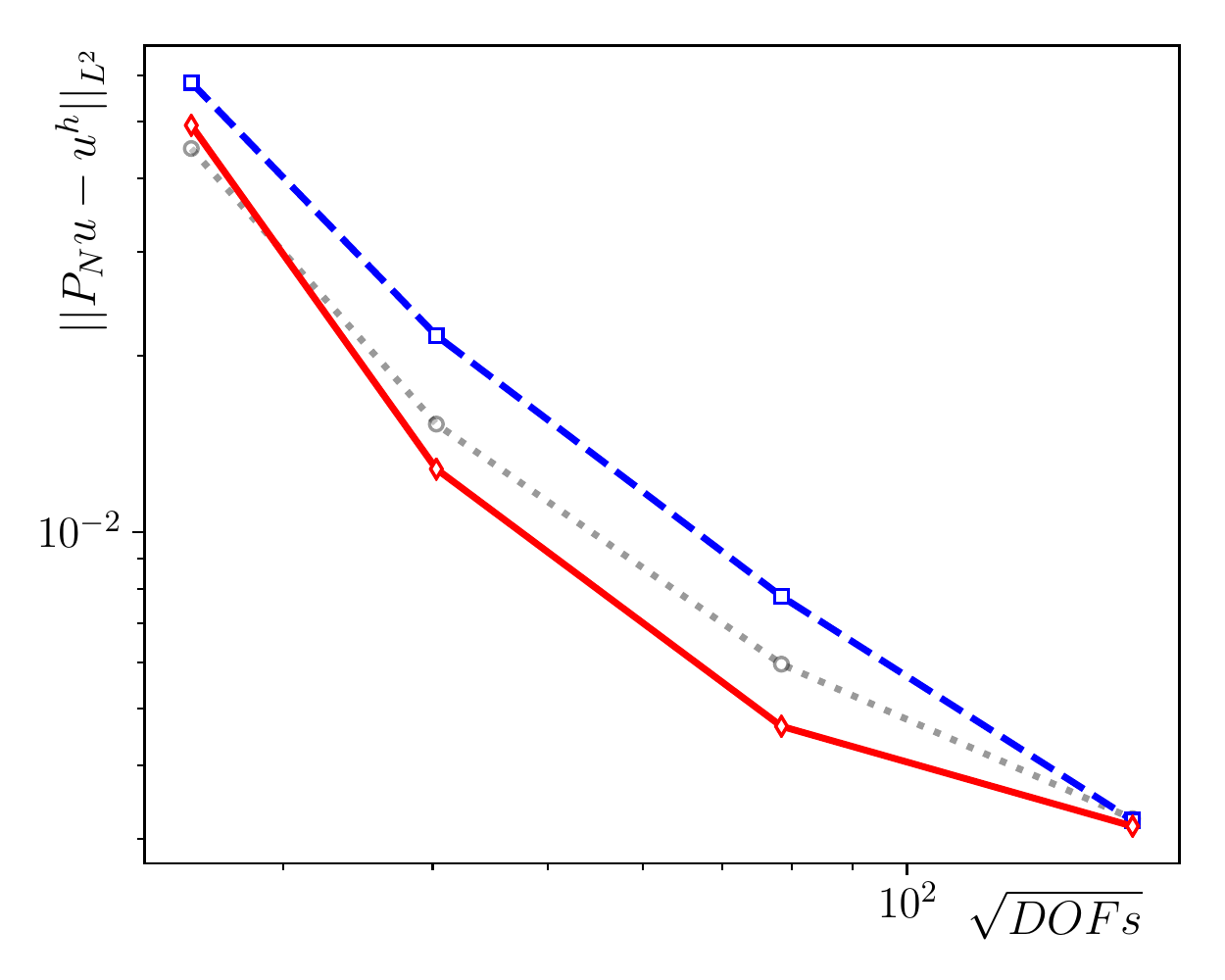} \label{fig:2DL2P3}}
    \caption{Detailed error behavior for cubic basis functions.}
    \label{fig:2DConvP}
\end{figure}



\section{Conclusion}
\label{sec:concl}

In this article, we unify the theories of variational multiscale analysis and weakly enforced boundary conditions into one consistent framework. Individually, these elemental numerical methods have shown great value in the context of fluid mechanics. With their merger, we are in a position to develop a fine-scale model that is appropriate for use in combination with Nitsche's method.

When the Dirichlet boundary conditions are enforced weakly, the standard $H^1_0$ projector is no longer applicable for the scale decomposition around which the variational multiscale method revolves. 
Instead, we propose a new projector, which we call the Nitsche projector. We show that adoption of this projector in the multiscale formulation naturally leads to Nitsche's formulation. That is, both the penalty term and the symmetry term in Nitsche's formulation automatically fall into place as part of the variational multiscale scale decomposition.

The model for the remaining fine-scale terms is based on the inversion of the fine-scale problem, which, in turn, is formally posed in the kernel space of the projector. We show that the functional constraints that define the $H^1_0$ projector are a subset of those corresponding to the Nitsche projector, such that we can largely base the inversion of the fine-scale problem on existing theory. An important difference in the context of weakly enforced boundary conditions is that the assumption of vanishing fine scales on element boundaries is no longer applicable for elements adjacent to the Dirichlet boundary. As a result, the fine-scale model that we obtain is the classical VMS model plus an additional boundary term. This `augmented' term takes into account the non-vanishing fine scales on the Dirichlet boundary. It may be interpreted as a consistent streamline diffusion in the symmetry term of Nitsche's formulation. 

Additionally, we develop approximations for the modeling terms $\tau$ and $\gamma$ based on the fine-scale Green's functions. These expressions and approximation strategies are also suitable for discretization with higher-order elements.

With this new model, and these new parameter definitions, we retrieve nodally exact solutions on one-dimensional meshes for all polynomial orders. This is an important property of the classical VMS model, which is lost when the boundary conditions are enforced weakly. On two-dimensional domains, we observe that the augmented model more closely resembles the actual coarse-scale solution defined by the Nitsche projector, as measured in an $L^2$ sense. This holds for all polynomial orders. For quadratic and cubic basis functions, the model without the augmented term already performs very well with the newly developed $\tau$ approximations. The error due to the boundary layer is contained in a single row of elements. 
For linear basis functions, however, the classical VMS model leads to a too thick boundary layer. This is almost completely mitigated when the augmented model is added to the formulation.\\

\noindent \textbf{Acknowledgments.} D. Schillinger gratefully acknowledges support from the National Science Foundation via the NSF CAREER Award No. 1651577 and from the German Research Foundation (Deutsche Forschungsgemeinschaft DFG) via the Emmy Noether Award \mbox{SCH 1249/2-1}. M.F.P. ten Eikelder and I. Akkerman are grateful for the support of Delft University of Technology.\\[0.36cm]

\appendix
\section{Fine-scale Green's functions and $\gamma$'s vanishing moments}\label{sec:fine-scale green}

In this appendix we draw conclusions on the $\gamma$ expression for the fine-scale Green's functions corresponding to $P=2$ and $P=3$ polynomial basis functions. Recall the definition of $\gamma$: 
\begin{align}
&\gamma = \frac{1}{|F|} \int\limits_{K}\!\int\limits_{F} \frac{x^{P-1}}{h^{P-1}}  \mathbb{H} g'^P_{H^1_0}(x,y)\d{y} \d{x} \,, \label{Agamdef} 
\end{align}
where we now add the superscript $P$ to the fine-scale Green's function to denotes the polynomial order of the coarse-scale basis functions on which $\mathscr{P}_{\!H^1_0}$ projects.

The fine-scale Green's function associated to the $H^1_0$-projector has been studied extensively in \cite{Hughes2007}. The authors prove the element local nature of $g'_{H^1_0}(x,y)$ in the one-dimensional case. They also show that in a single element, the fine-scale Green's function can then be obtained from the element local classical Green's function:
\begin{align}
\begin{split}
&   g'^P_{H^1_0}(x,y) = g(x,y)- \left[ \begin{matrix}  \int\limits_{0}^h g(x,y) \d y &\cdots& \int\limits_{0}^h y^{P-2}g(x,y) \d y \end{matrix}\right]\\ 
&   \hspace{0.5cm}\left[ \begin{matrix} 
    \int\limits_{0}^h\int\limits_{0}^h g(x,y)\d x\d y & \cdots &\int\limits_{0}^h\int\limits_{0}^h y^{P-2} g(x,y)\d x\d y \\
    \vdots & \ddots & \vdots \\
    \int\limits_{0}^h\int\limits_{0}^h x^{P-2} g(x,y)\d x\d y & \cdots &\int\limits_{0}^h\int\limits_{0}^h x^{P-2}y^{P-2} g(x,y)\d x\d y
    \end{matrix}\right]^{-1} \left[ \begin{matrix}  \int\limits_{0}^h g(x,y) \d x \\\vdots\\ \int\limits_{0}^h x^{P-2}g(x,y) \d x \end{matrix}\right] 
\end{split}. \label{gprime}
\end{align}
Refer to \cite{Stoter2017a} or \cite{Hughes2004b} for the expression for $g(x,y)$. The resulting functions for $P=1,2$ and $3$ are plotted in \Cref{fig:fsg}. For this particular case, the fine-scale Green's function for $P=1$ is exactly the element local classical Green's function $g(x,y)$.
\begin{figure}
    \centering
    \subfloat[$g'^1_{H^1_0}(x,y) = g(x,y)$.]{\includegraphics[width=0.32\linewidth]{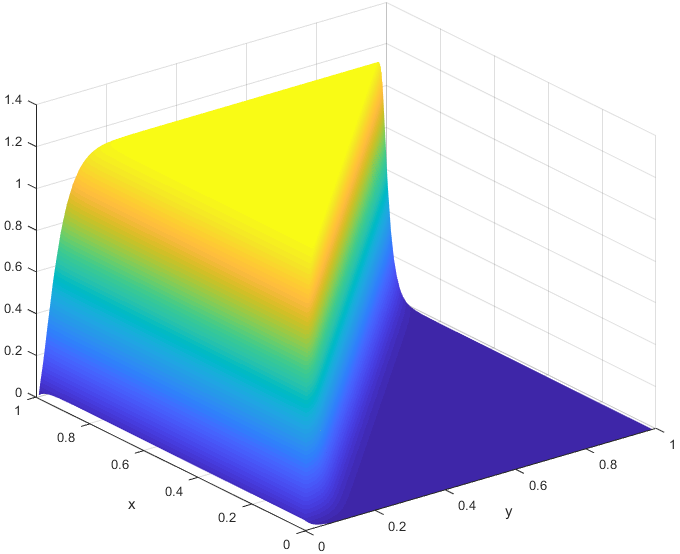}}\hspace{0.1cm}
    \subfloat[$g'^2_{H^1_0}(x,y)$.]{\includegraphics[width=0.32\linewidth]{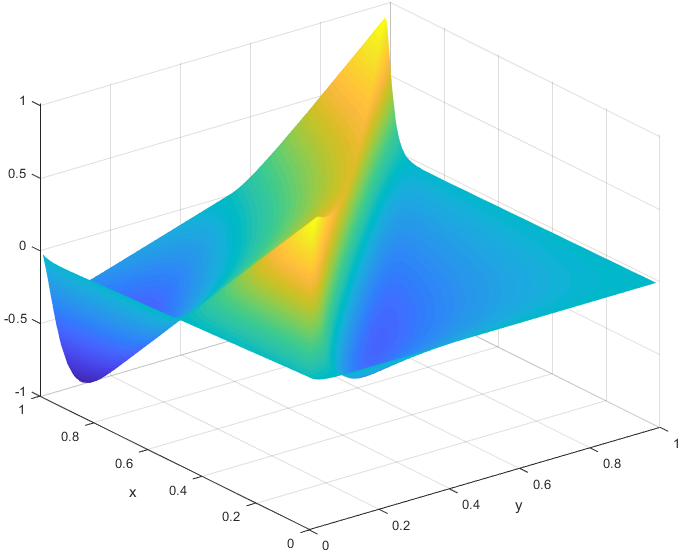}}\hspace{0.1cm}
    \subfloat[$g'^3_{H^1_0}(x,y)$.]{\includegraphics[width=0.32\linewidth]{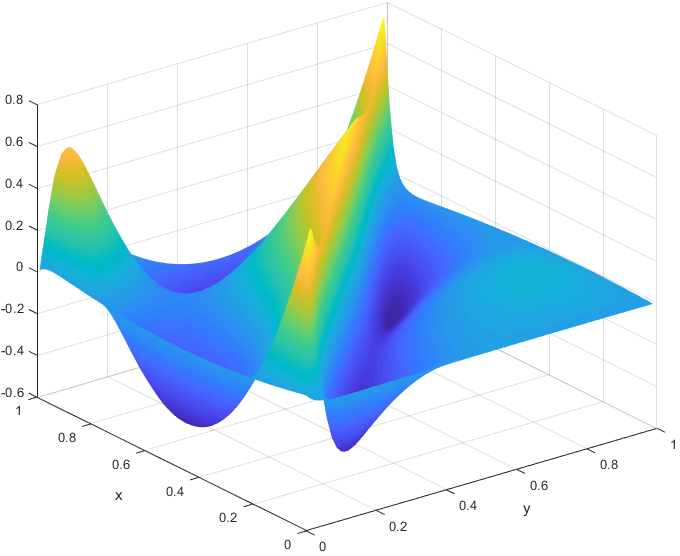}}
    \caption{Fine-scale Green's functions on one element for different polynomial coarse-scale basis functions. Using $\kappa = 0.02$ and $a=0.8$ on an element of size $h=1$.}
    \label{fig:fsg}
\end{figure}

The derivation in \cref{ssec:OneDModel} requires vanishing `moments' of the fine-scale Green's function, as stated in \cref{moment2}. We prove that this holds for $g'^P_{H^1_0}(x,y)$ in the following theorem.

\begin{thm}
\label{thm:0gamma} 
Define a $\gamma$-like parameter that depends on coarse-scale polynomial order $P$ and a $Q$th moment:
\begin{align}
\gamma^{Q,P } := \frac{1}{|F|} \int\limits_{0}^h\int\limits_{F} \frac{x^{Q-1}}{h^{Q-1}} \mathbb{H}  g'^P_{H^1_0}(x,y) \, \textup{d} y\, \textup{d} x \,,
\label{App1gammQP}
\end{align}
then from the definition of $ g'^P_{H^1_0}(x,y)$ in \cref{gprime} it follows that:
\begin{align}
\gamma^{Q,P } = \begin{cases}
0 \qquad & \text{if }Q<P \,,\\
\gamma\qquad & \text{if }Q=P \,.
\end{cases}
\end{align}
\end{thm}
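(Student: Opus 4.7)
The plan is to reduce the boundary-moment statement to an interior-moment statement for $g'^P_{H^1_0}$, and then leverage the explicit projection formula (gprime) to evaluate those interior moments directly.

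First I would observe that the operator $\mathbb{H}=-\kappa\,\boldsymbol{n}\!\cdot\!\nabla_y$ differentiates only in $y$, while the test weight $x^{Q-1}/h^{Q-1}$ depends only on $x$. Since in the one-dimensional, element-local setting $F$ is a point and $|F|=1$, the defining double integral for $\gamma^{Q,P}$ collapses to
\begin{equation*}
\gamma^{Q,P} \;=\; \frac{1}{h^{Q-1}}\int_0^h x^{Q-1}\,\big(\mathbb{H}\, g'^P_{H^1_0}(x,y)\big)\Big|_{y\in F}\,\mathrm{d}x \;=\; \frac{1}{h^{Q-1}}\,\mathbb{H}_y\!\left[\int_0^h x^{Q-1}\,g'^P_{H^1_0}(x,y)\,\mathrm{d}x\right]_{y\in F}.
\end{equation*}
Thus the claim $\gamma^{Q,P}=0$ for $Q<P$ will follow immediately from the identity
\begin{equation*}
\int_0^h x^{q}\,g'^P_{H^1_0}(x,y)\,\mathrm{d}x \;=\; 0 \qquad \text{for all } q=0,\dots,P-2 \text{ and all } y\in(0,h),
\end{equation*}
because differentiating a function that is identically zero in $y$ still yields zero.

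Next I would establish that vanishing-moment identity by a direct calculation using the explicit formula in \eqref{gprime}. Denote
\begin{equation*}
a_k(x)=\int_0^h y^k g(x,y)\,\mathrm{d}y,\qquad b_l(y)=\int_0^h x^l g(x,y)\,\mathrm{d}x,\qquad M_{kl}=\int_0^h\!\!\int_0^h x^k y^l g(x,y)\,\mathrm{d}x\,\mathrm{d}y,
\end{equation*}
so that $g'^P_{H^1_0}(x,y)=g(x,y)-\sum_{k,l=0}^{P-2} a_k(x)(M^{-1})_{kl} b_l(y)$. Multiplying by $x^q$ and integrating in $x$ gives
\begin{equation*}
\int_0^h x^q g'^P_{H^1_0}(x,y)\,\mathrm{d}x \;=\; b_q(y) \;-\; \sum_{k,l} M_{qk}\,(M^{-1})_{kl}\,b_l(y) \;=\; b_q(y)-\sum_l \delta_{ql}\,b_l(y)\;=\;0,
\end{equation*}
since $\int_0^h x^q a_k(x)\,\mathrm{d}x = M_{qk}$. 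This holds precisely for $q=0,\dots,P-2$, which is exactly the range $Q-1\le P-2$ needed.

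Finally, for the case $Q=P$ I would simply note that the integrand $x^{P-1}/h^{P-1}$ coincides with the weight in the definition of $\gamma$ in \eqref{gamdef}, so $\gamma^{P,P}=\gamma$ by definition. The main (and only nontrivial) obstacle is the bookkeeping in the matrix-algebra step showing that the two-sided polynomial subtraction in \eqref{gprime} kills all moments up to order $P-2$ in \emph{either} variable; once that reciprocity is verified, the exchange of $\mathbb{H}$ with the $x$-integration finishes the proof cleanly.
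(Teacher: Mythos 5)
Your proof is correct and follows essentially the same route as the paper's: both rest on the explicit formula \eqref{gprime} and the key observation that the vector of moments $\int\!\!\int x^{Q-1}y^{k}g\,\mathrm{d}x\,\mathrm{d}y$ is a row of the moment matrix, so that contracting with its inverse yields a Kronecker delta and the correction term exactly cancels $b_{Q-1}(y)$. The only (cosmetic) difference is ordering: you first establish the interior identity $\int_0^h x^{q}g'^P_{H^1_0}(x,y)\,\mathrm{d}x\equiv 0$ in $y$ and then let $\mathbb{H}$ act on the zero function, whereas the paper separates $\mathbb{H}$ and the integrations onto the two factors first and cancels at the level of the $\gamma^{k,1}$ quantities.
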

\begin{proof}
The equality $\gamma^{Q,P } = \gamma$ for $Q=P$ follows directly from the definition of $\gamma$. For $Q<P$ we substitute the definition of the fine-scale Green's function from \cref{gprime}. After carrying out the integration in \cref{gprime}, the first vector becomes independent of the $y$-variable, and the last vector independent of the $x$-variable. The center matrix is filled with constants. This means that the differential operator and integration from \cref{App1gammQP} act on different vectors and they can thus be separated. After re-ordering of derivatives and integrals we obtain:
\begin{align}
\begin{split}
\gamma^{Q,P } = &\gamma^{Q,1} - \left[ \begin{matrix} \int\limits_{0}^h  \int\limits_{0}^h x^{Q-1} g(x,y) \d x \d y &\cdots&\int\limits_{0}^h  \int\limits_{0}^h  x^{Q-1} y^{P-2}g(x,y) \d x \d y \end{matrix}\right] \\ 
&   \hspace{0.5cm}\left[ \begin{matrix} 
    \int\limits_{0}^h\int\limits_{0}^h g(x,y)\d x\d y & \cdots &\int\limits_{0}^h\int\limits_{0}^h y^{P-2} g(x,y)\d x\d y \\
    \vdots & \ddots & \vdots \\
    \int\limits_{0}^h\int\limits_{0}^h x^{P-2} g(x,y)\d x\d y & \cdots &\int\limits_{0}^h\int\limits_{0}^h x^{P-2}y^{P-2} g(x,y)\d x\d y
    \end{matrix}\right]^{-1}
    \left[ \begin{matrix} \,\\[-0.2cm] \gamma^{1,1}  \\[0.3cm]\vdots\\[0.3cm]  \gamma^{P-2,1} \\[0.2cm] \end{matrix}\right].
\end{split}\label{AppTh2}
\end{align}
For ease of notation we denote the involved vectors and matrix $\boldsymbol{\zeta}^T$, $\textbf{C}^{-1}$ and $\boldsymbol{\xi}$. By recognizing that $\boldsymbol{\zeta}^T$ is the $Q$th row of $\textbf{C}$ we can write $\boldsymbol{\zeta}^T= \textbf{e}_Q^T \textbf{C}$, where $\textbf{e}_Q$ is a vector of zeros with a 1 at the $Q$th row. Substitution into the matrix-vector multiplication yields:
\begin{align}
    \gamma^{Q,P }_F = \gamma^{Q,1}_F -\boldsymbol{\zeta}^T\textbf{C}^{-1}\boldsymbol{\xi} = \gamma^{Q,1}_F - \textbf{e}_Q^T \textbf{C}\, \textbf{C}^{-1}\boldsymbol{\xi} = \gamma^{Q,1}_F - \textbf{e}_Q^T \boldsymbol{\xi} = \gamma^{Q,1}_F - \gamma^{Q,1}_F = 0 \,.
\end{align}
Note that this only holds for $Q<P$, since $\textbf{C}$ has $P-1$ rows.
\end{proof}

\bibliographystyle{unsrt}
\bibliography{MyBib}

\end{document}